\documentclass[11pt]{article}

\usepackage{ccanonne}

\usepackage{url}            %
\usepackage{booktabs}       %
\usepackage{amsfonts}       %
\usepackage{nicefrac}       %
\usepackage{microtype}      %

\usepackage{amsmath,amsthm,amssymb} %
\usepackage{bbm} 			%
\usepackage{algorithm,algpseudocode} %
\usepackage[top=1in, bottom=1in, left=1.1in, right=1.1in]{geometry} %

\usepackage[style=alphabetic,natbib=true,maxnames=99,maxalphanames=99]{biblatex}
\addbibresource{references.bib}

\setlength{\parindent}{0pt}
\allowdisplaybreaks

\crefname{assumption}{Assumption}{Assumptions}

\DeclareMathOperator*{\esssup}{ess\,sup}

\title{Optimal Rates for Nonparametric Density Estimation under Communication Constraints}
\author{Jayadev Acharya\thanks{Cornell University. Email: \email{acharya@cornell.edu}}
 \and Cl\'ement L. Canonne\thanks{University of Sydney. Email: \email{clement.canonne@sydney.edu.au}}
 \and Aditya Vikram Singh\thanks{Indian Institute of Science, Bangalore. Email: \email{adityavs@iisc.ac.in}}
 \and Himanshu Tyagi\thanks{Indian Institute of Science, Bangalore. Email: \email{htyagi@iisc.ac.in}}}

\begin{document}
	\maketitle
	
\begin{abstract}
  We consider density estimation for Besov spaces when each sample is quantized to only a limited number of bits.
We provide a noninteractive adaptive estimator that exploits the sparsity of wavelet bases, along with a simulate-and-infer technique from parametric estimation under communication constraints. We show that our estimator is nearly rate-optimal by deriving minimax lower bounds that hold even when interactive protocols are allowed. Interestingly, while our wavelet-based estimator is almost rate-optimal for Sobolev spaces as well, it is unclear whether the standard Fourier basis, which arise naturally for those spaces, can be used to achieve the same performance.
\end{abstract}
\clearpage
	\tableofcontents
	\newpage
	\section{Introduction} \label{sec:introduction}
	  Estimating distributions from samples is a fundamental statistical task. 
Modern applications, such as those arising in Federated Learning or the Internet of Things (IoT), often limit access to the true data samples.
\newer{One common limitation} in large scale distributed systems is communication constraints, which require that each data sample must be {compressed} to a small number of bits. \medskip

Most prior work on \newer{communication-constrained estimation} has focused on parametric  problems such as Gaussian mean estimation and discrete distribution estimation.
In this work, we study nonparametric density estimation under communication constraints where independent samples from an unknown distribution (whose density $f$ lies in a suitable function class) are distributed across players (one sample per player), and each player can only send $\numbits$ bits about their sample to a central referee; the referee outputs an estimate of $f$ based on these $\numbits$-bit messages. 
This problem has been considered by~\citep{BarnesHO20} for densities in H\"older classes, a relatively simple class for which the normalized histogram with uniform bins is known to be optimal in the centralized setting. For densities
in H\"older classes, a natural method for density estimation in the distributed setting is to quantize each data sample into uniform bins and use the optimal estimator for distributed discrete distribution estimation. Indeed,~\cite{BarnesHO20} shows that this is optimal for distributed estimation of densities from the H\"older class under communication constraints. However, this simple estimator does not seem to extend to the richer Sobolev class and the most general Besov classes. 
In particular, the following question is largely open:
\begin{center}\itshape
	How to quantize samples to estimate densities from Besov classes under\linebreak[2] communication constraints?
\end{center}
We resolve this question for the cases  when the density belongs to a Besov class with known parameters (\emph{nonadaptive} setting), as well as when the density belongs to a Besov class where only upper and lower bounds on parameters are known (\emph{adaptive} setting). Specifically, our proposed estimators exploit the sparsity of wavelet basis for the Besov class, and use vector quantization followed by the distributed simulation technique introduced in \citep{AcharyaCT20inf} for distributed parametric estimation.
We also establish information-theoretic lower bounds that prove the optimality of our estimators (up to logarithmic terms in the adaptive setting). 

\subsection{Problem setup} \label{subsec:setup}
Let $X_1,\ldots,X_n$ be i.i.d. samples from an unknown distribution with density $f$ supported on $\cX \eqdef [0,1]$ and belonging to the Besov space $\cB(p,q,s)$. There are $n$ distributed users (\emph{players}) and player $i$ has access to sample $X_i$. Each player can only transmit an $\numbits$ bit message to a central server (\emph{referee}) whose goal, upon observing $\numbits$-bit messages from $n$ players, is to estimate~$f$.
We consider an \emph{interactive} setting, where the current player observes the messages from previous players and can use them to design their message.\footnote{Since our lower bounds rely on general results in~\cite{AcharyaCT20}, we borrow the notation from that paper.}
That is, in round $i$, player $i$ chooses a communication-constrained channel (randomized mapping) $W_i\colon\cX\to\{0,1\}^\numbits$ as a function of prior messages $Y_1, \ldots, Y_{i-1}$ and randomness $U$ available to all players; she then passes $X_i$ through $W_i$ to generate $Y_i\in\{0,1\}^\numbits$.
The referee observes the messages $Y_1,\ldots, Y_n$ and outputs an estimate $\hat f$ of $f$. 
We term such an $\hat f$ an \emph{$(\ns, \numbits)$-estimate}.
Let
$\cE_{\ns, \numbits}$ denote the set of all $(\ns, \numbits)$-estimates. 

Our goal is to design estimators that achieve the minimax expected $\cL_r$ loss defined, for $r\geq 1$, by
\begin{equation}\label{eq:def:minmax:risk}
\cL_r^*(\ns, \numbits, p,q,s)\eqdef \inf_{\hat f \in \cE_{\ns, \numbits}}\sup_{f \in \cB(p,q,s)}
\bE{f}{\big\| \hat f-f \big\|_r^r}.
\end{equation}
For upper bounds on $\cL_r^*(\ns, \numbits, p,q,s)$, we design algorithms under the more restricted \emph{noninteractive} protocols, where the channel $W_i$ of player $i$ is not allowed to depend on the messages $Y_1, \ldots, Y_{i-1}$ or on the common randomness $U$, but may depend on the private randomness $U_i$ available at player $i$, where $U_1,\dots,U_n$ are independent of each other and jointly of $U$. Noninteractive protocols are easier to implement, and result in simpler engineering for the distributed system. 

\subsection{Our results and techniques}
Our first result is an information-theoretic lower bound on $\cL_r^*$.
\begin{theorem}
  \label{theo:lb}
For any $p,q,s,r$, there exist constants $C=C(p,q,s,r)>0, \alpha=\alpha(p,q,s,r)>0$ such that 
	\begin{align*}
        \lefteqn{\cL_r^*(\ns, \numbits, p,q,s)}
        \\
        &\geq C\cdot
	\begin{cases}
	    \max\{\ns^{-\frac{rs}{2s+1}}, (\ns 2^\numbits)^{-\frac{rs}{2s+2}}\}, &r \leq (s+1)p,\\
	    \max\{\ns^{-\frac{rs}{2s+1}}, (\ns 2^\numbits)^{-\frac{r(s-1/p+1/r)}{2(s-1/p)+2}}\log(\ns 2^\numbits)^{-\alpha}\}, &r \in \paren{(s+1)p,(2s+1)p}, \\
	    \max\{\big(\frac{\ns}{\log \ns}\big)^{-\frac{r(s-1/p+1/r)}{2(s-1/p)+1}}, (\ns 2^\numbits)^{-\frac{r(s-1/p+1/r)}{2(s-1/p)+2}}\log(\ns 2^\numbits)^{-\alpha}\}, &r\geq(2s+1)p.
	\end{cases}
	\end{align*}
\end{theorem}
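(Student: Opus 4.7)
The plan is to prove the two terms appearing inside each of the three maxima separately. The first term in each case is the classical unconstrained Besov minimax rate ($\ns^{-rs/(2s+1)}$ in the first two regimes and $(\ns/\log\ns)^{-r(s-1/p+1/r)/(2(s-1/p)+1)}$ in the third), and it is inherited directly from the centralized literature: any $(\ns,\numbits)$-estimator is in particular a valid centralized estimator based on $\ns$ samples, so every classical lower bound applies unchanged. The novel content is the second (communication-dependent) term, which I would obtain by combining a wavelet-based Fano/Assouad construction with the communication-constrained lower-bound inequalities of \cite{AcharyaCT20} (which is why those inequalities are cited in \cref{subsec:setup}).

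For the communication-dependent bound, I fix a compactly supported mother wavelet $\psi$ of sufficient regularity and consider perturbations of the uniform density on $\cX$ of the form
\[
    f_\sigma = 1 + \delta \sum_{k \in S} \sigma_k \psi_{J,k},
\]
where $S \subseteq \{0,\dots,2^J-1\}$ has size $m$, $\sigma \in \{-1,+1\}^m$, and $J,m,\delta$ are parameters to be tuned. The Besov constraint forces $\delta \lesssim 2^{-J(s+1/2-1/p)} m^{-1/p}$ (up to constants depending on $p,q,s$), while the pairwise $\cL_r$ separation between $f_\sigma$ and $f_{\sigma'}$ at Hamming distance $t$ is of order $\delta\,2^{J(1/2-1/r)} t^{1/r}$. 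In the \emph{dense} regime $r \leq (s+1)p$ I take $m=2^J$ (full packing) and invoke Varshamov--Gilbert on $\{-1,+1\}^{2^J}$; in the \emph{sparse} regimes $r>(s+1)p$ I take $m \ll 2^J$, chosen so that the $\cL_r$ separation is governed by the sparsely placed bumps rather than by the Besov envelope, and union over the $\binom{2^J}{m}$ choices of the support $S$.

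With the packing fixed, I invoke the communication-constrained Assouad/Fano bounds of \cite{AcharyaCT20}, which control the information that an interactive $\numbits$-bit transcript $(Y_1,\dots,Y_\ns)$ carries about $\sigma$ by a quantity of the form $\ns \cdot 2^\numbits \cdot \chi^2(f_\sigma, f_{\sigma'})$ (provided the family is suitably ``decomposable''). Since each pairwise $\chi^2$ is of order $\delta^2$, the usual effective sample size $\ns$ is replaced by $\ns 2^\numbits$; optimizing $J$ (and, in the sparse regime, $m$) to balance this information bound against the $\cL_r$ separation produces the advertised exponents $-rs/(2s+2)$ and $-r(s-1/p+1/r)/(2(s-1/p)+2)$, and the combinatorial cost of choosing $S$ among the $\binom{2^J}{m}$ subsets accounts for the factor $\log(\ns 2^\numbits)^{-\alpha}$ appearing in the sparse cases.

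The main obstacle is the sparse regime $r>(s+1)p$, where the packing must simultaneously achieve (i) large $\cL_r$ separation via sparse wavelet bumps, (ii) small pairwise $\chi^2$, and (iii) the decomposability hypothesis required by the interactive lower bounds of \cite{AcharyaCT20}, so that perturbing a single coordinate of $\sigma$ modifies the distribution in a nearly orthogonal direction. Verifying (iii) is delicate because different perturbations can share the same support $S$, and the correct reconciliation of this combinatorial issue with the information bound is what produces the extra logarithmic factor. Tracking this factor carefully, and matching it across the three regimes to obtain the single unified $\max$-form stated in the theorem, is the most technical step of the argument.
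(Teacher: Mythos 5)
Your high-level strategy is correct and matches the paper's in its broad strokes: inherit the first (unconstrained) term in each maximum from the classical literature, and obtain the second (communication-dependent) term by a wavelet-perturbation construction fed through the interactive information bounds of \cite{AcharyaCT20}, producing the expected $\ns \to \ns 2^\numbits$ replacement of effective sample size. In the dense regime $r \leq (s+1)p$ your construction $f_\sigma = 1 + \delta \sum_{k} \sigma_k \psi_{J,k}$ with full support $m=2^J$ and uniform Rademacher $\sigma$ is essentially the paper's family $\cP_1$, and the paper indeed derives $\epsilon^r \gtrsim (\ns 2^\numbits)^{-rs/(2s+2)}$ this way (using an Assouad-type reduction plus the average-discrepancy bound, not a Varshamov--Gilbert/Fano packing, but these are close cousins here).

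The genuine divergence is in the sparse regime $r > (s+1)p$, and it matters. You propose to make the \emph{construction} sparse: pick a support $S$ with $|S| = m \ll 2^J$, union over the $\binom{2^J}{m}$ choices of $S$, and attribute the logarithmic factor to this combinatorial counting. The paper does something structurally different and, crucially, cleaner for the interactive setting: it keeps a \emph{full} parameterization over the hypercube $\{-1,+1\}^{2^j}$ and sets $f_z = g_0 + \gamma\sum_k (1+z_k)\psi_{j,k}$, so that a bump appears only when $z_k=+1$, and then makes the \emph{prior} sparse --- $Z_k$ i.i.d.\ Rademacher with bias $\tau = 1/d$. With high probability only $O(j)$ bumps appear, and this sparsity shows up in the Besov-norm constraint as $\gamma \lesssim 2^{-j(s+1/2-1/p)}\, j^{-1/p}$; that extra $j^{-1/p}$ is precisely the source of the $\log(\ns 2^\numbits)^{-\alpha}$ factor, not the $\binom{2^J}{m}$ count. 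This design choice is not cosmetic: the interactive lower-bound theorem the paper invokes (\cref{thm:AcharyaCT20}) is stated for a product Rademacher prior over $\{-1,+1\}^d$ with a local, coordinate-wise perturbation structure (\cref{assn:1,assn:2,assn:3}), and it produces a bound of the form $\frac{1}{d}\sum_i \dtv(\p^{Y^n}_{-i},\p^{Y^n}_{+i}) \lesssim \sqrt{(\lambda \wedge \tau^{-1})\, \ns\, \alpha^2 2^\numbits / d}$. Your union-over-supports packing does not fit this hypercube-with-product-prior mold, so it is not clear that the decomposability hypotheses you rightly flag as the ``main obstacle'' can actually be verified in your framework --- you would need an altogether different (Fano-type) interactive information bound, which the cited reference does not supply in the form you need. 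The paper sidesteps this entirely by putting the sparsity in the prior, so the hypercube structure and the per-coordinate binary hypothesis tests of the Assouad argument go through unchanged, at the modest price of conditioning on the event $Z\in\cG$ when applying the estimator's guarantee. Your proposal correctly anticipates the difficulty, but the mechanism you sketch for resolving it (combinatorial union bound over supports) is not the one that closes the gap; the sparse-prior trick is the missing idea.
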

We emphasize that this lower bound applies to \emph{interactive} protocols as defined in \cref{subsec:setup}. 
When the parameters $p,q,s$ of the Besov space are known, we design a \emph{noninteractive} estimator that achieves the optimal rate for $r\leq p$.
\begin{theorem}
  \label{theo:ub:nonadaptive}
For any $r\geq 1$ and $p,q,s$ with $r\leq p$, there exist a constant $C=C(p,q,s,r)$ and an $(\ns, \numbits)$-estimate $\hat{f}$ formed using a noninteractive protocol such that
	\begin{align*}
	\sup_{f \in \cB(p,q,s)} \bE{f}{\big\| \hat f-f \big\|_r^r}
	 \leq 
	C\max\{\ns^{-\frac{rs}{2s+1}}, (\ns 2^\numbits)^{-\frac{rs}{2s+2}}\}.
	\end{align*}
\end{theorem}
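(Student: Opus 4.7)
The estimator is the plug-in linear wavelet projection truncated at a resolution $J$ to be chosen. Fix a compactly supported wavelet basis $\{\phi_{J_0,k}\}_k\cup\{\psi_{j,k}\}_{j\geq J_0,k}$ of regularity exceeding $s$ (for instance, a Daubechies basis), so that every $f\in\cB(p,q,s)$ admits the expansion $f=\sum_k\alpha_{J_0,k}\phi_{J_0,k}+\sum_{j\geq J_0,k}\beta_{j,k}\psi_{j,k}$, where each coefficient is the expectation of the corresponding basis function under $f$. The standard Besov approximation inequality, applicable in the ``dense'' regime $r\leq p$, yields the truncation bias $\|P_Jf-f\|_r^r\lesssim 2^{-Jrs}$, where $P_Jf$ is the projection onto scales $<J$; it thus suffices to control the variance $\bE{}{\|\hat f-P_Jf\|_r^r}$ and balance against this.

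At each scale $j\in\{J_0,\ldots,J-1\}$, the coefficient vector $\beta^{(j)}\in\mathbb{R}^{d_j}$ (with $d_j\asymp 2^j$) is the population mean of the random vector $V^{(j)}(X)\eqdef(\psi_{j,k}(X))_k$; analogously for the vector of scaling coefficients at scale $J_0$. The standard pointwise bound $\sum_k\psi_{j,k}(x)^2\lesssim 2^j$ yields $\|V^{(j)}(X)\|_2\lesssim 2^{j/2}$ almost surely. I partition the $\ns$ players into disjoint groups of sizes $n_{J_0},\ldots,n_{J-1}$ with $\sum_j n_j=\ns$, and assign group $j$ the task of estimating the scale-$j$ coefficients via a noninteractive vector-mean protocol built on the simulate-and-infer scheme of~\citep{AcharyaCT20inf}. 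This yields
\[
    \bE{}{\|\hat\beta^{(j)}-\beta^{(j)}\|_2^2}\lesssim\frac{2^j\max(1,\,2^j/2^\numbits)}{n_j}.
\]
Optimizing $\{n_j\}$ under $\sum_j n_j=\ns$ via Cauchy--Schwarz gives total variance $O(2^J/\ns)$ when $2^J\leq 2^\numbits$ and $O(2^{2J}/(\ns 2^\numbits))$ otherwise; balancing against the $L_2^2$-bias $2^{-2Js}$ selects $2^J\asymp \ns^{1/(2s+1)}$ or $2^J\asymp(\ns 2^\numbits)^{1/(2s+2)}$ respectively, producing $L_2^2$ risk $\max\{\ns^{-2s/(2s+1)},(\ns 2^\numbits)^{-s/(s+1)}\}$.

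To pass from $L_2^2$ to $L_r^r$ in the regime $r\leq p$: when $r\leq 2$, Jensen's inequality together with $\|g\|_r\leq\|g\|_2$ on $[0,1]$ gives $\bE{}{\|\hat f-f\|_r^r}\leq\bigl(\bE{}{\|\hat f-f\|_2^2}\bigr)^{r/2}$, directly matching the target $\max\{\ns^{-rs/(2s+1)},(\ns 2^\numbits)^{-rs/(2s+2)}\}$; when $r>2$, I instead invoke a Rosenthal-type moment bound on the wavelet coefficient estimates together with the $L_r$-wavelet-norm characterization $\|g\|_r^r\asymp\int\bigl(\sum_{j,k}|c_{j,k}\psi_{j,k}(x)|^2\bigr)^{r/2}dx$ to lift the per-scale $L_2^2$ bounds to $L_r^r$. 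The principal technical obstacle is establishing the per-scale variance bound $2^j\max(1,\,2^j/2^\numbits)/n_j$ by a genuinely \emph{noninteractive} protocol that exploits the $\ell_2$-boundedness $\|V^{(j)}\|_2\lesssim 2^{j/2}$ and matches the communication-optimal rate $R^2 d/(n\cdot 2^\numbits)$ for $d$-dimensional bounded mean estimation---a naive random-coordinate scheme only gives $R^2(1\vee d/2^\numbits)/n$, which is too weak when $d\gg 2^\numbits$. Carrying out this vector quantization/simulate-and-infer step, and verifying that the resulting protocol remains noninteractive, constitutes the bulk of the work.
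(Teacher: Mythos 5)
Your proposal has the right skeleton (wavelet projection at a cutoff scale, bias bound $2^{-Jrs}$ from the dense-regime Besov approximation inequality, bias--variance balance selecting $2^J \asymp \min\{n^{1/(2s+1)},(n2^\numbits)^{1/(2s+2)}\}$) and the Cauchy--Schwarz allocation across scales would indeed reproduce the target variance $\min\{2^J/n,\ 2^{2J}/(n2^\numbits)\}$ if the claimed per-scale bound were available. But you have explicitly deferred the load-bearing step and, more importantly, you have abstracted it incorrectly, in a way that hides the one idea that makes the theorem work.

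The obstacle you flag---``a noninteractive protocol exploiting $\|V^{(j)}\|_2 \lesssim 2^{j/2}$ that achieves $R^2 d/(n 2^\numbits)$ for $d$-dimensional bounded mean estimation''---is not resolvable in that generality. For a dense $\ell_2$-bounded vector mean problem (e.g., Gaussian location, $\|\mu\|_2 \leq R$, $d$ coordinates, $\numbits$ bits per player), the communication-constrained rate scales like $d^2/(n2^\numbits)$ (cf.\ the Gaussian lower bounds of Han--\"Ozg\"ur--Weissman), which with $R^2 \asymp d \asymp 2^j$ would give $2^{3j}/(n_j 2^\numbits)$, strictly worse than your target $2^{2j}/(n_j 2^\numbits)$. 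Your derivation of the $\ell_2$ bound passes through the inequality $\sum_k \psi_{j,k}(x)^2 \lesssim 2^j$, but this inequality holds precisely \emph{because} at most $O(1)$ terms in the sum are nonzero for any fixed $x$ (compact support of $\psi$); the $\ell_2$ bound is a lossy summary of this sparsity, and it is the sparsity itself, not the norm bound, that is the exploitable structure. The paper's mechanism (Claim 3.1 together with \cref{alg:quantizer}) makes this explicit: each sample's coefficient vector has at most $2(A+2) = O(1)$ nonzero entries, whose locations are determined by the bin of $X$; a player therefore sends the pair (bin index, vector-quantized nonzero block), an alphabet of size $O(2^j)$, and the referee applies $\textsc{DistrSim}_\numbits$ to simulate $m_j = O(n_j 2^\numbits/2^j)$ i.i.d.\ quantized samples. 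Each simulated sample is a (coordinatewise unbiased) realization of the \emph{entire} coefficient vector, so the empirical mean has per-coordinate error $O(1/m_j)$ and total $\ell_2^2$ error $O(2^j/m_j) = O(2^j \max(1, 2^j/2^\numbits)/n_j)$, which is the bound you wanted. Framed as generic vector mean estimation, this bound is unobtainable; framed as the simulation of a $O(2^j)$-ary distribution encoding a sparse vector, it is immediate.

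Two smaller points. First, your multi-level allocation with Cauchy--Schwarz is a valid alternative, but for the nonadaptive case with $r \leq p$ the paper uses a simpler single-level estimator: by the telescoping identity $\sum_k \alpha_{J_0,k}\phi_{J_0,k} + \sum_{j<H}\sum_k \beta_{j,k}\psi_{j,k} = \sum_k \alpha_{H,k}\phi_{H,k}$, it suffices to estimate the scaling coefficients at a single resolution $H$ using all $n$ players, with alphabet $O(2^H)$ and $m = O(n2^\numbits/2^H)$ simulated samples---no across-scale split needed. Second, for $r>2$ your appeal to ``a Rosenthal-type moment bound together with the $L_r$-wavelet-norm characterization'' is a gesture at the right tools but not an argument; the paper instead applies Rosenthal coordinatewise to the quantized estimators (using the sparsity again, since $\widehat{\phi}_{H,k}(X_i)$ is nonzero with probability $O(2^{-H})$, giving per-coordinate $r$-th moments $O(m^{-r/2})$) and then sums them up via an $L_r$ inequality on wavelet expansions, which yields the variance term $(2^H/m)^{r/2}$ for all $r \geq 1$. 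You would need to carry that out rather than invoke it.
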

We finally design an \emph{adaptive}, \emph{noninteractive} estimator that only requires bounds on $s$, and no further knowledge of $p$ and $q$. 
 Moreover, this estimator achieves (up to logarithmic factors) the optimal rate for all parameter values.
\begin{theorem}
  \label{theo:ub:adaptive}
For any $N\in \N$, $r\geq 1$, and $p,q,s$ with $1/p < s < N$, there exist constants $C=C(p,q,s,r),\alpha=\alpha(p,q,s,r)$, and an $(\ns, \numbits)$-estimate $\hat{f}$ formed using a noninteractive protocol
such that 
	\begin{align*}
	\sup_{f \in \cB(p,q,s)} \bE{f}{\big\| \hat f-f \big\|_r^r}
	 \leq C \log^\alpha n \cdot
	\begin{cases}
	    \max\{\ns^{-\frac{rs}{2s+1}}, (\ns 2^\numbits)^{-\frac{rs}{2s+2}}\}, &r\leq(s+1)p,\\
	    \max\{\ns^{-\frac{rs}{2s+1}}, (\ns 2^\numbits)^{-\frac{r(s-1/p+1/r)}{2(s-1/p)+2}}\}, &r \in \paren{(s+1)p,(2s+1)p},\\
	    \max\{\ns^{-\frac{r(s-1/p+1/r)}{2(s-1/p)+1}}, (\ns 2^\numbits)^{-\frac{r(s-1/p+1/r)}{2(s-1/p)+2}}\}, &r\geq(2s+1)p,
	\end{cases}
	\end{align*}
	where the protocol only requires knowledge of $N$ (an upper bound on $s$).
\end{theorem}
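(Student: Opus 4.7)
The plan is to combine hard wavelet thresholding for adaptivity with the distributed coefficient estimator underlying Theorem~\ref{theo:ub:nonadaptive}, which already handles the regime $r\leq p$. Fix a compactly supported wavelet basis $\{\phi_k\}_k\cup\{\psi_{jk}\}_{j\geq 0,k}$ with at least $N$ vanishing moments, so that every density $f\in\cB(p,q,s)$ with $1/p<s<N$ enjoys the usual $\ell^p$-sparsity of its wavelet coefficients across levels. The estimator I would build has the form
\[
\hat f \eqdef \sum_k \hat\alpha_k \phi_k + \sum_{j_0\leq j\leq J}\sum_k \hat\beta_{jk}\,\mathbf{1}\{|\hat\beta_{jk}|>T_j\}\psi_{jk},
\]
where the coarse level $j_0$, the finest level $J$, and the level-dependent threshold $T_j$ depend only on $n$, $\numbits$, and $N$, and in particular not on the unknown $(p,q,s)$.

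First I would split the $n$ players into $J+1$ disjoint groups, assigning $n_j\asymp n/J$ players to each level $j\in\{j_0,\dots,J\}$, with $2^J$ taken polynomial in $n\cdot 2^\numbits$ so that the truncation bias beyond level $J$ is negligible for every admissible $s<N$. Inside each level I would run the noninteractive vector-quantization plus simulate-and-infer protocol of~\citep{AcharyaCT20inf} already used to prove Theorem~\ref{theo:ub:nonadaptive}, in order to estimate the whole vector $(\beta_{jk})_k$ of empirical wavelet coefficients at level $j$; this meshes well with wavelets because each sample $X_i$ activates only $O(1)$ coordinates of $(\psi_{jk}(X_i))_k$. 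A standard analysis of that protocol gives per-coefficient control $\mathbb{E}[(\hat\beta_{jk}-\beta_{jk})^2]\lesssim \frac{1}{n_j}+\frac{2^j}{n_j 2^\numbits}$, together with a subgaussian tail, which motivates the universal threshold
\[
T_j \asymp \sqrt{\frac{\log n}{n_j}}+\sqrt{\frac{2^j\log n}{n_j 2^\numbits}}
\]
dominating the coefficient noise uniformly in $(j,k)$ on a high-probability event~$\mathcal{A}$.

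On $\mathcal{A}$ I would invoke the classical Kerkyacharian--Picard oracle inequality for hard-thresholded wavelet estimators, which bounds $\|\hat f-f\|_r^r$ by the truncation bias above level $J$, plus $\sum_j\sum_k \bigl(|\beta_{jk}|^r\wedge T_j^r\bigr)$, plus a remainder summing the bias and variance of coefficients on the wrong side of the threshold. Combining this with the embedding of $\cB(p,q,s)$ into the appropriate weak-$\ell^p$ balls and carefully matching the two-term form of $T_j$ against the resulting big-block/small-block decomposition reproduces the three regimes in the statement: for $r\leq(s+1)p$ both parts of $T_j$ are dominated by the dense linear rate, recovering the bound of Theorem~\ref{theo:ub:nonadaptive}; for $r>(s+1)p$ the sparse part of the trade-off takes over and produces the effective-smoothness exponent $r(s-1/p+1/r)/(2(s-1/p)+2)$ in the $n\cdot 2^\numbits$ term; and for $r\geq (2s+1)p$ the same phenomenon kicks in for the $n$ term as well. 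The $\log^\alpha n$ factor collects the $J=O(\log n)$-way player split, the union-bound inflation built into $T_j$, and the standard Donoho--Johnstone payment for adaptivity.

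The main obstacle will be the concentration step justifying the choice of $T_j$: the vector-quantized simulate-and-infer estimator is clean only on the event that the internal simulation succeeds, and one must ensure both that the failure event contributes at most a $\log^\alpha n$ blow-up and that the effective variance at each level really scales like $n_j^{-1}+2^j n_j^{-1}2^{-\numbits}$, rather than the crude range bound $\|\psi_{jk}\|_\infty^2\asymp 2^j$. Once that concentration is secured, the remainder of the argument is bookkeeping: instantiating the oracle inequality level-by-level and verifying that, uniformly in $s<N$, the resulting rate never exceeds the maximum claimed in the theorem.
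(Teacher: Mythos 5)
Your plan is essentially the one the paper uses: split the players into one group per resolution level, have each group run wavelet-sparsity-based vector quantization plus simulate-and-infer to produce $m_J\asymp n2^\numbits/\bigl(H(2^J\lor 2^\numbits)\bigr)$ synthetic i.i.d.\ coefficient estimates at level $J$, and hard-threshold. The crux, which you correctly flag but leave open, is precisely the concentration step. The point is that after simulation each surrogate coefficient value $\widehat{\psi}_{J,k}(X'_i)$ has range $O(2^{J/2})$ but \emph{variance} $O(1)$, because it is nonzero only on the event that $X'_i$ falls inside the $O(2^{-J})$-length support of $\psi_{J,k}$, an event of probability at most $2A\,2^{-J}\norminf{f}$ (and $\norminf{f}$ is uniformly bounded on the Besov ball via Fact~\ref{fact:infinitynorm}, using $s>1/p$). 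Applying Bernstein's inequality with this variance bound and $b=2^{J/2}$ (rather than Hoeffding, which would cost an extra $2^{J/2}$) gives $\Pr\{|\widehat{\beta}_{J,k}-\beta_{J,k}|\geq\gamma t_J\}\leq 2^{-\gamma J}$ with $t_J\asymp\sqrt{J/m_J}$, provided the setting of $H$ is such that $m_J\geq J2^J$ for all $J\leq H$; this is exactly Lemma~\ref{lemma:concentration:tJ}. Until you write this down, the proposal is missing the single lemma on which the whole argument rests. Your threshold uses $\log n$ in place of the paper's per-level $J$; since $J\leq H\asymp\log n$ this only inflates $\log$ factors, so it is harmless, but per-level $J$ is what makes the geometric sums $\sum_J 2^{J(\cdot)}\,2^{-\gamma J}$ close cleanly.

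A second point where you cannot cite an off-the-shelf result: the classical Kerkyacharian--Picard/Donoho--Johnstone oracle inequality assumes a level-homogeneous noise scale, whereas here $m_J$ (and hence $t_J$) grows geometrically across levels. The paper therefore reruns the oracle argument by hand, decomposing the ``details'' error into four pieces ($E_{bs},E_{bb},E_{sb},E_{ss}$) and, for the $E_{bb}$ and $E_{ss}$ terms, optimizing a free exponent $\alpha_J\in[0,p]$ (or $[p,r]$) level by level. The crossover level $M$ at which one switches choices of $\alpha_J$ is where $\log m_J - J(2s+1)$ (resp.\ $\log m_J - J(2s+1-2/p)$) changes sign, and substituting $m_J$ yields $2^M\asymp(n2^\numbits/H)^{1/(2s+2)}\land(n/H)^{1/(2s+1)}$ in the dense regime and the analogous $(s-1/p)$-exponents in the sparse regime; this is exactly what produces the three cases in the statement. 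So your ``carefully matching the two-term form of $T_j$'' is the real work, not bookkeeping, and it is worth noting that the optimization that replaces the classical oracle inequality is what generates the elbow at $r=(s+1)p$.
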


In summary, for all $r\ge 1$, the minimax $\cL_r$ loss of estimating $\cB(p,q,s)$ (up to logarithmic factors) is
\begin{equation}
  \label{eq:summary:rate:results}
\cL_r^*(\ns, \numbits, p,q,s) \asymp
\begin{cases}
	    \max\{\ns^{-\frac{rs}{2s+1}}, (\ns 2^\numbits)^{-\frac{rs}{2s+2}}\}, &r\leq(s+1)p,\\
	    \max\{\ns^{-\frac{rs}{2s+1}}, (\ns 2^\numbits)^{-\frac{r(s-1/p+1/r)}{2(s-1/p)+2}}\}, &r \in \paren{(s+1)p,(2s+1)p},\\
	    \max\{\ns^{-\frac{r(s-1/p+1/r)}{2(s-1/p)+1}}, (\ns 2^\numbits)^{-\frac{r(s-1/p+1/r)}{2(s-1/p)+2}}\}, &r\geq(2s+1)p.
	\end{cases}
\end{equation}
It it worth noting that the first term of the maximum in all cases is the standard, unconstrained nonparametric rate (\emph{cf.}~\cite{DonohoJKP96}, or the discussion below), while the second term reflects the convergence slowdown due to the communication constraints. The effect of communication constraints disappears when $\numbits$ is sufficiently large. In particular, we get back the centralized rates when $\numbits$ satisfies
\[
\numbits \geq 
\begin{cases}
	\Paren{\frac{1}{2s+1}}\log\ns, &\text{for } r\leq(s+1)p, \\
	\Paren{\frac{2s(1-1/r)}{(2s+1)(s-1/p+1/r)} - \frac{1}{2s+1}}\log\ns, &\text{for } r \in \paren{(s+1)p,(2s+1)p},\\
	\Paren{\frac{1}{2(s-1/p)+1}}\log\ns, &\text{for } r\geq(2s+1)p.
\end{cases}
\]

For the standard $\cL_2$ loss, with, say $p\geq 2$, the minimax rate becomes the more interpretable quantity
\[
\cL_2^*(\ns, \numbits, p,q,s) \asymp
	    \max\{\ns^{-\frac{2s}{2s+1}}, (\ns 2^\numbits)^{-\frac{2s}{2s+2}}\},
\]
where we see that the $\numbits$-bit communication constraint reduces the exponent of the convergence rate from $\frac{2s}{2s+1}$ to $\frac{2s}{2s+2}$. As $\numbits$ grows or $s$ tends to $\infty$, the two rates coincide. Finally, from~\eqref{eq:summary:rate:results} we observe qualitative changes at $r=(s+1)p$ and $r=(2s+1)p$, where the rate exponent changes slope. This phenomenon, whose analogue is observed in the unconstrained setting~\cite{DonohoJKP96} as well as under local privacy constraints~\cite{ButuceaDKS20}, is sometimes referred to as the \emph{elbow effect}.

\paragraph{Quantize, simulate and infer.}
A conceptually simple technique for distributed estimation under communication constraints (``simulate-and-infer'') was proposed in~\cite{AcharyaCT20inf}, which uses communication to \emph{simulate} samples from the unknown distribution, and provides an optimal rate estimator for \emph{discrete} distribution estimation under communication constraints. A natural extension of this approach for nonparametric estimation would be to quantize the samples to the available number of bits (\ie $\numbits$) and use this quantized sample to estimate the distribution. However, it is unclear if this approach gives optimal rates. Instead, in our approach, we quantize without inducing any ``loss of information.'' Specifically, we form an approximately sufficient statistic (based on wavelets) that can be represented using a finite number of bits and does not result in rate loss. The number of bits could still be more than $\numbits$,  and therefore, we then use simulate-and-infer to generate samples from the statistic. Thus, the loss of information due to communication constraints only happens in the last step, when we use multiple samples to simulate a sample from the sufficient statistic; the quantization part is just for efficient finite representation.

\paragraph{Sobolev spaces and Fourier bases.}
A first approach that we tried for Sobolev spaces was to use the Fourier basis, a natural choice for a Sobolev space. However, all our attempts led to a suboptimal performance either in the dependence on $\numbits$ (we were not able to get the exponential $2^{-\numbits}$ dependence) or the exponent of $\ns$ (\ie when we tried to get a $2^{-\numbits}$ dependence, this resulted in a suboptimal exponent of $\ns$). Somewhat surprisingly, the more general wavelet-based approach described above gives us tight bounds for Sobolev space as well, since Sobolev
space $\cS(\beta)=\cB(2,2,\beta)$. Thus, it seems necessary to use the wavelet representation even for Sobolev spaces to get an appropriately ``small'' statistic~--~sparsity of wavelets is very useful for inference under communication constraints.

\subsection{Prior work}
The seminal work of Donoho, Johnstone, Kerkyacharian, and Picard~\cite{DonohoJKP96} proposed wavelet estimators for Besov class in the centralized setting, and showed that these estimators achieve near-optimal rates of convergence (up to logarithmic factors),
\begin{equation}
  \label{eq:summary:rate:results:centralized}
\cL_r^*(\ns, \infty, p,q,s) \asymp
\begin{cases}
	    n^{-\frac{rs}{2s+1}}, &r<(2s+1)p,\\
	    n^{-\frac{r(s-1/p+1/r)}{2(s-1/p)+1}}, &r\geq(2s+1)p.
	\end{cases}
\end{equation}
Their results highlight the fact that linear estimators are inherently suboptimal for estimation with respect to $\cL_r$ losses, when $r$ is large; that is, some nonlinearity in the estimator is required to achieve optimal rates for $r>p$. In particular, they show that nonlinearity in the form of \emph{thresholding} achieves optimal rates for $r>p$ (see \cref{sec:preliminaries:centralized} below for details). Further, they use thresholding to design \emph{adaptive} estimators that achieve near-optimal rates.
These minimax rates exhibit the aforementioned \emph{elbow effect}, where the error exponent is only piecewise linear, and changes slope at $r=(2s+1)p$. We refer the reader to~\cite{DonohoJKP96} for a further discussion of these phenomena. \medskip

Butucea, Dubois, Kroll, and Saumard~\cite{ButuceaDKS20} recently extended these ideas to obtain near-rate optimal estimators for Besov spaces under local differential privacy constraints. Their adaptive estimator, as well as the information-theoretic lower bounds they establish, show that similar phenomena occur in the context of locally private nonparametric estimation. Our work, specifically the analysis of our adaptive estimator, draws upon some of the ideas of~\cite{ButuceaDKS20}, with some crucial differences. In particular, the key ideas underlying our estimators~--~the wavelet-induced sparsity (\cref{clm:sparsity}), the use of distributed simulation, and vector quantization~--~are neither present in nor applicable to the setting of~\cite{ButuceaDKS20} (where the introduction of random noise to ensure differential privacy effectively removes wavelet sparsity). Furthermore, our lower bounds even apply to interactive protocols, unlike the lower bounds from~\cite{ButuceaDKS20} which are restricted to the noninteractive setting. \medskip

In summary, our paper is the first to derive the counterpart of the nonparametric density estimation results of~\cite{DonohoJKP96,ButuceaDKS20} under communication constraints, and shows that the analogue of the phenomena observed in~\cite{DonohoJKP96} holds in the communication-constrained setting.

\paragraph{Other works on distributed estimation.} We briefly discuss the related literature on distributed (communication-constrained) estimation problems. 
\cite{ZhuLafferty18,SzaboZanten20a,SzaboZanten20b,CaiWei21} have studied the problem of distributed nonparametric function estimation (regression) under a Gaussian white noise model in a noninteractive setting with $\ns$ players, where each player observes an independent copy of the stochastic process $\mathrm{d}Y(t) = f(t)\mathrm{d}t + (1/\sqrt{N}) \mathrm{d}W(t)$, $0\leq t \leq 1$. Here $W(t)$ is the standard Wiener process, and $f$ is the function to be estimated.
\cite{ZhuLafferty18} derive minimax rates for $f$ in Sobolev space under $\cL_2$ loss, where each player can send at most $\numbits$ bits.
\cite{SzaboZanten20a} derive minimax rates for $f$ in the Besov space $\cB(2,\infty,s)$ (``Sobolev type'') under $\cL_2$ loss, and $f$ in $\cB(\infty,\infty,s)$ (``H\"older type'') under $\cL_\infty$ loss, where each player can send at most $\numbits$ (assumed to be at least $\log N$) bits on average. Further, the paper proposes  near-optimal adaptive estimators (based on Lepski's method) that adapt to the smoothness parameter $s$, provided that $s \in [s_{\min},s_{\max})$, where $s_{\min}$ depends on $\ns,\numbits$ and $s_{\max}$ can be arbitrary.
\cite{SzaboZanten20b} further study the problem of adaptivity for $\cB(2,\infty,s)$ under $\cL_2$ loss and $\cB(\infty,\infty,s)$ under $\cL_\infty$ loss, and answers the question of whether it is possible to design adaptive estimators (adapting to the smoothness parameter $s$) that attain centralized minimax rates while {also} having the expected communication budget nearly the same as that of a minimax optimal distributed estimator that knows $s$. The paper shows that this is possible for $\cB(2,\infty,s)$ under $\cL_2$ loss provided that $s$ is below a certain threshold, and is impossible for $\cB(\infty,\infty,s)$ under $\cL_\infty$ loss. 
\cite{CaiWei21} derive minimax rates for $f$ in Besov spaces $\cB(p,q,s)$ (for $p\geq 2$) under $\cL_2$ loss, where each player can send at most $\numbits$ bits on average. In addition, it studies the problem of adaptivity and characterizes the minimax communication budget of adaptive estimators (adapting to parameters $p\geq 2, q > 1, s > 0$) that achieve centralized rates. The adaptive estimator proposed in this paper is based on thresholding, where the thresholding is done locally by each player. 

Interestingly, when the communication budget $\numbits$ is insufficient to achieve centralized rates, the minimax rates in these distributed nonparametric \emph{function estimation} problems decay polynomially in $\numbits$, which is in contrast to the minimax rates we obtain for distributed nonparametric \emph{density estimation} problem, where the decay is exponential in $\numbits$. \medskip

We now discuss related works on distributed parametric estimation problems. ~\cite{BarnesHO20} establish lower bounds on parametric density estimation, and on some restricted nonparametric families (H\"older classes) by bounding the Fisher information.~\cite{AcharyaCT20inf} obtain upper and lower bounds for discrete distribution estimation; our algorithms leverage the concept of \emph{distributed simulation} (``simulate-and-infer'') introduced in that context.~\cite{HanOW18} derive lower bounds for various parametric estimation tasks, including discrete distributions and continuous (parametric) families such as high-dimensional Gaussians (including the sparse case).~\cite{AcharyaCT20}, building on~\cite{AcharyaCLST20} (which focused on learning and testing discrete distributions), developed a general technique to prove estimation lower bounds for parametric families; our lower bounds rely on their framework, by suitably extending it to handle the nonparametric case. \medskip

We note that there are other approaches for establishing lower bounds under communication constraints such as the early works~\cite{ZDJW:13},~\cite{Shamir:14}; ~\cite{GargMN14, BGMNW:16}, where
bounds for specific inference problems under communication constraints were obtained;
and~\cite{BHO:19}, where Cram\'er--Rao bounds for this setting were developed.
We found the general approach of~\cite{AcharyaCT20} best fits our specific application, where we needed to handle interactive communication as well as a nonuniform prior on the parameter in the lower bound construction.

	\section{Preliminaries} \label{sec:besov}
		In this section, we first set out the notation used in the paper, before formally defining Besov spaces. Then, we list the assumptions that we make on the density function $f$, and briefly recall some aspects the existing estimators for density estimation in the centralized setting.  \medskip

Given two integers $m\leq n$, we write $\ibrac{m,n}$ for the set $\{m,m+1,\dots, n\}$ and $\ibrac{n}$ for $\ibrac{1,n}$. For two sequences or functions $(a_n)_n,(b_n)_n$, we write $a_n \lesssim b_n$ if there exists a constant $C>0$ (independent of $n$) such that $a_n \leq Cb_n$ for all $n$, and $a_n \asymp b_n$ if both $a_n \lesssim b_n$ and $a_n \gtrsim b_n$. For a function $g$, $\supp{g}$ denotes the support of $g$.

\subsection{Besov spaces}
Our exposition here is based on \cite{DonohoJKP96, HardleKPT12}. We start with a discussion on wavelets.

\paragraph{Wavelets.} A wavelet basis for $\cL^2(\R)$ is generated using two functions: $\phi$ (father wavelet) and $\psi$ (mother wavelet). The main feature that distinguishes a wavelet basis from the Fourier basis is that the functions $\phi$ and $\psi$ can have compact support. More precisely, there exists a function $\phi\colon \R \to \R$ such that
\begin{enumerate}
	\item\label{cond1} $\set{\phi(\cdot - k) : k \in \Z}$ forms an orthonormal family of $\cL^2(\R)$. Let $V_0 = \operatorname*{span}\set{\phi(\cdot - k) : k \in \Z}$.
	\item\label{cond2} For $j \in \Z$, let $V_j = \operatorname*{span}\set{\phi_{j,k} : k \in \Z}$, where $\phi_{j,k}(x) = 2^{j/2} \phi(2^j x - k)$. Then $V_j \subseteq V_{j+1}$.
	\item\label{cond3} $\phi \in \cL^2(\R)$, $\int \phi(x) dx = 1$. 
	\item[] We note that~\eqref{cond1},~\eqref{cond2}, and~\eqref{cond3} together ensure that $\cap_{j \in \Z} V_j = \set{0}$ and $\cup_{j \in \Z} V_j = \cL^2(\R)$.
	\item\label{cond4} $\phi$ satisfies the following regularity conditions for a given $N \in \Z_+$:
	\begin{enumerate}
		\item There exists a bounded non-increasing function $\Phi$ such that $\int \Phi(\abs{x}) \abs{x}^N dx < \infty$, and $\abs{\phi(x)} \leq \Phi(\abs{x})$ almost everywhere.
		\item $\phi$ is $N+1$ times (weakly) differentiable and $\phi^{(N+1)}$ satisfies $\esssup_x \sum_{k \in \Z} \abs{\phi^{(N+1)}(x-k)} < \infty$. 
	\end{enumerate}
	Any $\phi$ satisfying (a) and (b) is said to be $N$-regular.
\end{enumerate}
Let $W_j \subseteq \cL^2(\R)$ be a subspace such that $V_{j+1} = V_j \bigoplus W_j$ (i.e. $V_{j+1} = V_j + W_j$ and $V_j \cap W_j = \set{0}$). Then, there exists a function $\psi\colon \R \to \R$ such that
\begin{enumerate}
	\item $\set{\psi(\cdot - k) : k \in \Z}$ forms an orthonormal basis of $W_0$.
	\item $\operatorname*{span}\set{\psi_{j,k} : j \in \Z, k \in Z} = \cL^2(\R)$, where $\psi_{j,k}(x) = 2^{j/2} \psi(2^j x - k)$.
	\item $\psi$ satisfies the same regularity conditions as $\phi$.
\end{enumerate}
For any $\low \in \Z$, we can decompose $\cL^2(\R)$ as
\[
\cL^2(\R) = V_{\low} \bigoplus W_{\low} \bigoplus W_{\low+1} \bigoplus \cdots .
\]
That is, for any $f \in \cL^2(\R)$
\begin{equation} \label{eqn:waveletexp}
f = \sum_{k \in \Z} \alpha_{\low,k} \phi_{\low,k} + \sum_{j \geq \low} \sum_{k \in \Z} \beta_{j,k} \psi_{j,k},
\end{equation}
where
\[
\alpha_{\low,k} = \int f(x) \phi_{\low,k}(x) dx, \quad \beta_{j,k} = \int f(x) \psi_{j,k}(x) dx
\]
are called the wavelet coefficients of $f$. Moreover, for a father wavelet $\phi$, there is a canonical mother wavelet $\psi$ (Section 5.2 in \cite{HardleKPT12}) corresponding to $\phi$.

\paragraph{Besov spaces.} We now define the Besov space $\cB(p,q,s)$ with parameters $p,q,s$, where $1\leq p,q \leq \infty$, $s > 0$. Let $\phi$ be a father wavelet satisfying properties \ref{cond1}--\ref{cond4} above, with $N > s$. Then,
\begin{equation} \label{eqn:defbesov}
	f \in \cB(p,q,s) \iff \norm{\alpha_{0\cdot}}_p +
	\paren{\sum_{j=0}^{\infty} \paren{   2^{s+\frac{1}{2}-\frac{1}{p}} \norm{\beta_{j\cdot}}_p   }^q}^{1/q} < \infty
\end{equation}
where $\norm{\alpha_{0\cdot}}_p$ and $\norm{\beta_{j\cdot}}_p$ are the $\ell_p$ norms of the sequences $\set{\alpha_{0,k}}_{k \in \Z}$ and  $\set{\beta_{j,k}}_{k \in \Z}$ respectively. The sequences $\set{\alpha_{0,k}}_{k \in \Z}, \set{\beta_{j,k}}_{k \in \Z}$ come from the wavelet expansion of $f$ using the father wavelet $\phi$ and the corresponding mother wavelet $\psi$. The definition \eqref{eqn:defbesov} of $\cB(p,q,s)$ is invariant to the choice of $\phi$ as long as $N > s$. For the purposes of defining Besov norm, we fix a particular $\phi,\psi$, where $\phi$ is $N$-regular with $N >s$. Then, the \textit{Besov norm} of a function $f$ is defined as
\begin{equation}
	\label{eq:besov:norm}
	\norm{f}_{pqs} := \norm{\alpha_{0\cdot}}_p +
	\paren{\sum_{j=0}^{\infty} \paren{   2^{s+\frac{1}{2}-\frac{1}{p}} \norm{\beta_{j\cdot}}_p   }^q}^{1/q}.
\end{equation}

\subsection{Assumptions}
We make the following assumptions on the density $f$:
\begin{enumerate}
	\item $f$ is compactly supported: without loss of generality, $\supp{f} \subseteq [0,1]$.
	\item Besov norm of $f$ is bounded: without loss of generality,  $\norm{f}_{pqs} \leq 1$.
\end{enumerate}
Our algorithm works with any father and mother wavelets $\phi$ and $\psi$
satisfying the following conditions:
\begin{enumerate}
	\item $\phi$ and $\psi$ are $N$-regular, where $N > s$, and
	\item $\supp{\phi},\supp{\psi} \subseteq [-A,A]$ for some integer $A > 0$ (which may depend on $N$).
\end{enumerate}	
As an example, Daubechies' family of wavelets \cite{Daubechies92} satisfies these assumptions.

\subsection{Density estimation in centralized setting}
  \label{sec:preliminaries:centralized}
In the centralized setting, $X_1, \ldots, X_n$ from an unknown density $f\in\cB(p,q,s)$ are accessible to the estimator. Let the wavelet expansion of $f$ be
\begin{equation}
	\label{eq:wavelet:expansion}
	f = \sum_{k \in \Z} \alpha_{0,k} \phi_{0,k} + \sum_{j \geq 0} \sum_{k \in \Z} \beta_{j,k} \psi_{j,k},
\end{equation}
where $\phi_{j,k}(x) = 2^{j/2}\phi(2^j x - k)$, $\psi_{j,k}(x) = 2^{j/2}\psi(2^j x - k)$.
For any $\low,\high \in \Z$ with $\high \geq \low$, we have (see~\cref{fact:waveletlevel})
$
\sum_{k \in \Z} \alpha_{\low,k} \phi_{\low,k} + \sum_{j = \low}^{\high-1} \sum_{k \in \Z} \beta_{j,k} \psi_{j,k} = \sum_{k \in \Z} \alpha_{\high,k} \phi_{\high,k}.
$
Note that for a given $j,k$, $\hat{\alpha}_{j,k} \eqdef \frac{1}{n} \sum_{i=1}^{n}\phi_{j,k}(X_i)$ is an unbiased estimate of $\alpha_{j,k}$. Thus, for some $\high \in \Z_+$, an estimate of $f$ is
\begin{equation} \label{eq:central:nonadapt}
\hat{f}_{\rm lin} = \sum_{k \in \Z} \hat{\alpha}_{\high,k} \phi_{\high,k},
\quad \hat{\alpha}_{\high,k} \eqdef \frac{1}{n} \sum_{i=1}^{n}\phi_{\high,k}(X_i)
\end{equation}
where $\high$ is chosen depending on $n$ and parameters $p,q,s$ to minimize the (worst-case) $\cL_r$ loss. This simple estimator (with appropriate choice of $\high$) is rate-optimal when $1 \leq r \leq p$, but is sub-optimal when $r > p$ \cite{DonohoJKP96}. Moreover, setting $\high$ requires knowing the Besov parameters $p,q,s$, which renders this estimator nonadaptive. The main contribution of \cite{DonohoJKP96} was to demonstrate that \emph{thresholding} leads to estimators that are (i)~near-optimal for every $r \geq 1$; (ii)~adaptive, in the sense that the estimator does not use the values of parameters $p,q,s$ as long as $s$ lies in a certain range. For a given $\low,\high \in \Z_+$, $\low \leq \high$, a thresholded estimator outputs the estimate
\begin{equation} \label{eq:central:adapt}
\hat{f}_{\rm thresh} = \sum_{k \in \Z} \hat{\alpha}_{L,k} \phi_{L,k} + \sum_{j = \low}^{\high} \sum_{k \in \Z} \tilde{\beta}_{j,k} \psi_{j,k},
\quad \tilde{\beta}_{j,k} = \hat{\beta}_{j,k} \indic{|\widehat{\beta}_{j,k}| \geq t_j}
\end{equation}
where $\hat{\alpha}_{\low,k} \eqdef \frac{1}{n} \sum_{i=1}^{n}\phi_{\low,k}(X_i)$, $\hat{\beta}_{j,k} = \frac{1}{n} \sum_{i=1}^{n} \psi_{j,k}(X_i)$, and $t_j$ is a fixed threshold proportional to $\sqrt{j/n}$; here, $\low,\high$ depend on $n$, but not on parameters $p,q,s$.
Our proposed estimators draw upon these classical estimators.

	\section{Algorithms} \label{sec:algo}
	  We propose algorithms for density estimation under communication constraints that achieve optimal/near-optimal performance in terms of $n$ (number of players) and $\numbits$ (number of bits each player can send). Designing a density estimator in the communication-constrained setting consists of: (i) specifying the sample-dependent $\numbits$-bit message that a player sends to the referee; (ii) specifying the density estimate that the referee outputs based on the $\numbits$-bit messages from the $n$ players. As in the unconstrained setting, we estimate $f$ by estimating its wavelet coefficients. 

Our estimators consist of three ingredients: \emph{wavelet-induced sparsity}, \emph{vector quantization}, and \emph{distributed simulation}. We describe these next.

\paragraph{(i) Wavelet-induced sparsity.} Let the wavelet expansion of the density function $f$ be given by~\eqref{eq:wavelet:expansion}. 
For a given $J \in \Z_+$, partition the interval $[0,1]$ into $2^J$ uniform bins as
\begin{equation} \label{eqn:bins}
[0,1] = \bigcup_{t=0}^{2^{J}-1} E_t^{(J)} \ \text{ where } \
E_t^{(J)} \eqdef
\begin{cases}
	\left[ t{2^{-J}},(t+1){2^{-J}} \right) & \text{if } t \in \ibrac{0,2^{J}-2}, \\
	\left[ 1 - {2^{-J}}, 1 \right] & \text{if } t = 2^{J}-1.
\end{cases}
\end{equation}
For a bin $E_t^{(J)}$, $t \in \ibrac{0,2^{J}-1}$, let
\begin{align}
\cA_t^{(J)} &\eqdef \set{ k \in \Z : E_t^{(J)} \cap \: {\rm supp}(\phi_{J,k}) \text{ is non-empty} }; \label{eqn:At}\\
\cB_t^{(J)} &\eqdef \set{ k \in \Z : E_t^{(J)} \cap \: {\rm supp}(\psi_{J,k}) \text{ is non-empty} }. \label{eqn:Bt}
\end{align}
That is, for $x \in E_t^{(J)}$, we have
$\phi_{J,k}(x) = 0 \text{ for } k \notin \cA_t^{(J)}$, and $\psi_{J,k}(x) = 0 \text{ for } k \notin \cB_t^{(J)}$.
By ``wavelet-induced sparsity,'' we mean the following:
\begin{claim} \label{clm:sparsity}
	Let $[0,1] = \bigcup_{t=0}^{2^{J}-1} E_t^{(J)}$ as in~\eqref{eqn:bins}. Then, for each $t \in \ibrac{0,2^{J}-1}$,
	\[
	| \cA_t^{(J)} | \leq 2(A+2),\; \ |\cB_t^{(J)}| \leq 2(A+2),
	\]
where $A$ is the assumed bound for points in the support of $\phi$ and $\psi$.        
\end{claim}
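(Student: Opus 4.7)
The plan is to directly unwind the definitions: since $\phi$ and $\psi$ are assumed to be supported in $[-A,A]$, the dilated/translated functions $\phi_{J,k}$ and $\psi_{J,k}$ have supports that are translates of an interval of length $2A \cdot 2^{-J}$, whereas each bin $E_t^{(J)}$ has length $2^{-J}$. A simple arithmetic count then bounds the number of translates that can possibly overlap a given bin.

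More precisely, from $\phi_{J,k}(x) = 2^{J/2}\phi(2^J x - k)$ and $\supp(\phi)\subseteq[-A,A]$, I would first observe
\[
\supp(\phi_{J,k}) \subseteq I_k^{(J)} \eqdef \left[\frac{k-A}{2^J},\frac{k+A}{2^J}\right],
\]
and similarly $\supp(\psi_{J,k})\subseteq I_k^{(J)}$. For the intersection $I_k^{(J)} \cap E_t^{(J)}$ to be non-empty, the endpoints of the two intervals must satisfy $(k-A)2^{-J}\leq(t+1)2^{-J}$ and $(k+A)2^{-J}\geq t\cdot 2^{-J}$, which rearranges to $t-A \leq k \leq t+A+1$. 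The number of integers in this range is $2A+2 \leq 2(A+2)$, and this bound applies uniformly to every bin $E_t^{(J)}$ for $t\in\ibrac{0,2^J-1}$, including the last (closed) bin since the same endpoint inequalities are what matter. The argument for $\cB_t^{(J)}$ is literally identical, since the only property of $\phi$ used is that its support lies in $[-A,A]$, and this is also assumed for $\psi$.

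There is no real obstacle here: the content is purely the geometric observation that a translation family of intervals of length proportional to $2^{-J}$ (with unit spacing in the translation parameter, after rescaling) can only have $O(A)$ members overlap any single bin of length $2^{-J}$. The slight slack between the count $2A+2$ that the argument gives and the stated $2(A+2)$ presumably absorbs any edge effects one might worry about (e.g.\ using half-open versus closed intervals, or the corner case $t=2^J-1$); since the final bound is only used for an $O(1)$-factor cardinality estimate feeding into later rate calculations, this slack is harmless.
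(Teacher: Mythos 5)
Your proof is correct and matches the paper's (one-line) argument: both rest on the observation that $\supp(\phi_{J,k})$ is a translate by $k2^{-J}$ of an interval of length $2A\cdot 2^{-J}$, so only $O(A)$ values of $k$ can meet a bin of width $2^{-J}$. You simply carry the count through explicitly, arriving at $2A+2 \le 2(A+2)$, which confirms the claim with a bit of slack, exactly as the paper intends.
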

The claim follows from the observation that $\phi_{J,k}$ (resp., $\psi_{J,k}$) is obtained by translating $\phi_{J,0}$ (resp., $\psi_{J,0}$) in steps of size $2^{-J}$, and that ${\rm supp}(\phi_{J,k}) \subseteq [-A2^{-j},A2^{-j}]$  (resp., ${\rm supp}(\psi_{J,k}) \subseteq [-A2^{-j},A2^{-j}]$).

\paragraph{(ii) Vector quantization.} Consider the problem of designing a randomized algorithm that takes as input an arbitrary $x \in \R^d$ satisfying $\norm{x}_\infty \leq B$ and outputs a random vector $Q(x) \in \R^d$ chosen from an alphabet of finite cardinality, such that $\E[Q(x)] = x$. Our vector quantization algorithm (\cref{alg:quantizer}) achieves this, and is based on the following idea: Let $\cP$ be a convex polytope with vertices $\set{v_1,v_2,\ldots}$ such that $\set{x \in \R^d : \norm{x}_\infty \leq B} \subseteq \cP$. Given $x$ (with $\norm{x}_\infty \leq B$), express $x$ as a convex combination of vertices of $\cP$ (say, $x = \sum_{i} \theta_i v_i$) and output a random vertex $V$, where $V = v_i$ with probability $\theta_i$. Clearly, $\E[V] = x$.

Specifically, \cref{alg:quantizer} uses the polytope $\cP = \cP_{\cV}$ formed by the vertex set $\cV = \set{\pm (Bd)e_1,\ldots,\pm (Bd)e_d}$, where $e_i$ is the $i$-th standard basis vector (\ie $\cP$ is the $\lp[1]$ ball of radius $Bd$).
Note that $\abs{\cV} = 2d$ and that $\set{x \in \R^d : \norm{x}_\infty \leq B} \subseteq \cP_{\cV}$. 
This leads to the following claim.
\begin{claim} \label{clm:quantizer}
	Given $x \in R^d$ with $\norm{x}_\infty \leq B$ as input,~\cref{alg:quantizer} outputs a random variable $Q(x) \in \cV$ that is an unbiased estimate of $x$, with $\abs{\cV} = 2d$.
\end{claim}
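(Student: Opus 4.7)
The plan is to verify three things: (a) $|\cV| = 2d$ by inspection; (b) the hypothesis $\norm{x}_\infty \leq B$ places $x$ inside the polytope $\cP_\cV$, so a convex combination representation of $x$ over the vertices in $\cV$ exists; and (c) sampling a vertex according to the weights in this representation yields an unbiased estimate of $x$, immediately by linearity of expectation.

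For (a), the set $\cV = \set{\pm(Bd) e_i : i \in \ibrac{d}}$ consists of $2d$ distinct vectors. For (b), I would invoke the standard inequality $\norm{x}_1 \leq d\norm{x}_\infty \leq Bd$, so that $x$ lies in the $\ell_1$ ball of radius $Bd$; the latter is exactly $\cP_\cV$, since the $\ell_1$ ball of radius $r$ in $\R^d$ is the convex hull of $\set{\pm r e_i : i \in \ibrac{d}}$.

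For (c), I would write down an explicit convex decomposition. Setting $x_i^+ \eqdef \max(x_i, 0)$ and $x_i^- \eqdef \max(-x_i, 0)$ so that $x_i = x_i^+ - x_i^-$, define
\[
\theta_i^{\pm} \eqdef \frac{x_i^{\pm}}{Bd} + \frac{1}{2d}\Paren{1 - \frac{\norm{x}_1}{Bd}}, \quad i \in \ibrac{d}.
\]
Nonnegativity follows from $\norm{x}_1 \leq Bd$, and a short calculation gives $\sum_{i=1}^{d}(\theta_i^+ + \theta_i^-) = 1$ and
\[
\sum_{i=1}^{d} \theta_i^+ \cdot (Bd) e_i + \sum_{i=1}^{d} \theta_i^- \cdot (-(Bd) e_i) = \sum_{i=1}^{d} (x_i^+ - x_i^-) e_i = x,
\]
since the slack contributions from $1 - \norm{x}_1/(Bd)$ to $+(Bd)e_i$ and $-(Bd)e_i$ cancel pairwise. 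Drawing $Q(x) = \pm(Bd) e_i$ with probability $\theta_i^{\pm}$ therefore yields $\E[Q(x)] = x$.

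There is no real obstacle here; the only mild subtlety is handling the slack mass $1 - \norm{x}_1/(Bd)$ that appears whenever $x$ lies strictly inside $\cP_\cV$. Distributing this mass symmetrically between $\pm(Bd)e_i$ for each coordinate makes the weights a valid probability distribution without perturbing the conditional mean; any other valid tiebreaking rule (for instance, putting all the slack on one arbitrary antipodal pair) would equally work, and I expect that \cref{alg:quantizer} simply implements one specific such choice.
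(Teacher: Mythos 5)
Your proposal is correct and follows essentially the same approach as the paper: note that the $\ell_\infty$ ball of radius $B$ sits inside the $\ell_1$ ball of radius $Bd$ (the convex hull of $\cV$), write $x$ as a convex combination of the $2d$ vertices, and sample a vertex with the corresponding probabilities so that $\E[Q(x)] = x$ by linearity. The only difference is that you spell out an explicit decomposition $\theta_i^{\pm} = \frac{x_i^{\pm}}{Bd} + \frac{1}{2d}\bigl(1 - \frac{\norm{x}_1}{Bd}\bigr)$ (which checks out: nonnegativity from $\norm{x}_1 \leq Bd$, normalization from $\sum_i |x_i| = \norm{x}_1$, and $\theta_i^+ - \theta_i^- = x_i/(Bd)$ gives the right mean), whereas the paper leaves the existence of such a decomposition implicit.
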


\paragraph{Remark.} A more direct approach to quantization would be to do it coordinate-wise, \ie quantize (independently) each coordinate to $\set{-B,B}$ with appropriate probability to make it unbiased. This can equivalently be seen as quantizing the vector using the $\numbits_\infty$ ball  (of radius $B$) as the polytope. Here, the alphabet size becomes $2^d$ instead of $2d$ in \cref{alg:quantizer}; but, on the plus side, the coordinate-wise variance of the quantized vector becomes $\approx B^2$, instead of $\approx (Bd)^2$ in \cref{alg:quantizer}. In our estimators, we will be quantizing vectors of constant length ($d$), so these dependencies on $d$ do not affect the rate (up to constants).

\begin{algorithm}[H]
	Let $\cV = \set{\pm (Bd)e_1,\ldots,\pm (Bd)e_d}$. Label the vectors in $\cV$ as $v_1,\ldots,v_{2d}$.
	\begin{algorithmic}[1]
		\Require $x \in \R^d$ with $\norm{x}_\infty \leq B$. 				
		\State Write $x$ as convex combination of vectors in $\cV$: 
		$x = \sum_{i=1}^{2d} \theta_i v_i.$				
		\State Choose $I \in \set{1,\ldots,2d}$ randomly where $I=i$ with probability $\theta_i$ and \Return $Q(x) = v_I$.	
	\end{algorithmic}	
	\caption{Vector quantization} 
	\label{alg:quantizer}
\end{algorithm}

\paragraph{(iii) Distributed simulation.} The problem of \emph{distributed simulation} is the following: There are $n$ players, each having an i.i.d.\ sample from an unknown $d$-ary distribution $\bp$. Each player can only send $\numbits$ bits to a central referee, where $\numbits < \log d$. Can the referee simulate i.i.d.\ samples from $\bp$ using $\numbits$-bit messages from the players? \cite{AcharyaCT20inf} proposed a noninteractive communication protocol,
 denoted $\textsc{DistrSim}_\numbits$,
using which the referee can simulate {one} sample from $\bp$ using $\numbits$-bit messages from $O(d/2^\numbits)$ players. Moreover, the protocol is deterministic at the players, and only requires private randomness at the referee. 
\begin{theorem} \label{thm:simulation}
	For any $\numbits \geq 1$, the simulation protocol $\textsc{DistrSim}_\numbits$ lets the referee simulate $\Omega(n 2^\numbits/d)$ i.i.d.\ samples from an unknown $d$-ary probability distribution $\bp$ using $\numbits$-bit messages from $n$ players, where each player holds an independent sample from $\bp$.
\end{theorem}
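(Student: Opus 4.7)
The statement is essentially a restatement of the distributed simulation theorem of \cite{AcharyaCT20inf}, so my plan is to reconstruct their protocol and verify the stated guarantee rather than invent a new one. The centerpiece is a partition-based rejection sampling scheme whose acceptance probability is, crucially, \emph{independent} of the unknown $\bp$ thanks to a uniform draw made at the referee using private randomness.

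The protocol $\textsc{DistrSim}_\numbits$ partitions the alphabet $[d]$ into $M = \Theta(d/2^\numbits)$ groups $G_1,\dots,G_M$, each of size $\Theta(2^\numbits)$, and assigns player $i$ deterministically to group $g(i)\in[M]$ in round-robin fashion, so that every group is assigned $\Theta(n/M)$ players. Player $i$'s $\numbits$-bit message deterministically encodes whether $X_i\in G_{g(i)}$ and, if so, the index of $X_i$ within $G_{g(i)}$ (absorbing a constant factor into the size of the groups to reserve one symbol for the ``out-of-group'' flag). The referee, having read all $n$ messages, implements the following rejection sampler to produce one i.i.d.\ sample from $\bp$: draw $J$ uniformly from $[M]$ using private randomness, uniformly select one player $I$ with $g(I)=J$, and output the decoded $X_I$ if the message indicates success (otherwise declare failure). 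A direct calculation shows that $\Pr[\text{output}=y]=\bp(y)/M$ for every $y\in[d]$, so conditioned on non-failure the output is exactly $\bp$-distributed, and each single run succeeds with probability $1/M$.

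To scale up to $\Omega(n 2^\numbits/d) = \Omega(n/M)$ i.i.d.\ samples, I would run the rejection sampler in parallel over disjoint batches of players, re-using the round-robin assignment within each batch; disjointness gives mutual independence across runs, while within each batch the sampler succeeds with probability $\Omega(1)$ by suitably sizing the batch (on the order of $M$ up to constants) and exploiting that $\sum_j \bp(G_j)=1$ guarantees a constant expected mass in the randomly chosen bucket. A standard Chernoff bound then shows that the total number of non-failure outputs concentrates sharply at $\Omega(n/M)$, which yields the claimed bound.

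The main obstacle is not the concentration step but the \emph{exactness} of the output law: the analysis must show that the referee's simulated samples are genuinely i.i.d.\ from $\bp$ rather than approximations thereof. This is precisely where the design of \cite{AcharyaCT20inf} is delicate~--~the uniform choice of $J$ at the referee is what forces the per-output acceptance probability to equal $1/M$ \emph{regardless of} $\bp$, producing an exact $\bp$ conditional on success; combined with the deterministic-at-the-player message structure and the disjointness of batches, this yields independence as well. Establishing both properties simultaneously, together with sharp tracking of the constants hidden in the $\Omega(\cdot)$, is the core technical content borrowed from \cite{AcharyaCT20inf}.
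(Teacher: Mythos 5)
The paper itself does not prove this theorem; it is imported verbatim from \cite{AcharyaCT20inf}, so the comparison must be to that source rather than to a proof in the paper. Your reconstruction of the single-run rejection sampler is fine as far as it goes: drawing $J$ uniformly from $[M]$ (for $M\asymp d/2^\numbits$) and then using a single player $I$ with $g(I)=J$ does give $\Pr[\text{output}=y]=\bp(y)/M$, hence an exactly $\bp$-distributed output conditioned on acceptance, with acceptance probability exactly $1/M$.

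The gap is in the scaling step. You assert that a batch of $\Theta(M)$ players makes the sampler succeed ``with probability $\Omega(1)$,'' justified by ``$\sum_j\bp(G_j)=1$ guarantees a constant expected mass in the randomly chosen bucket.'' That justification is incorrect: the expected mass in a \emph{uniformly random} bucket is $\E_J[\bp(G_J)]=\tfrac1M\sum_j\bp(G_j)=1/M$, not a constant. As you describe the protocol~---~one draw of $(J,I)$ per batch~---~each batch of $\Theta(M)$ players therefore yields only $1/M$ expected successes, and $n/M$ batches yield $\Theta(n/M^2)$ samples in total, a factor of $M$ short of the claimed $\Omega(n2^\numbits/d)=\Omega(n/M)$. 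To recover the stated rate you would need to run $\Theta(M)$ \emph{separate} rejection attempts per batch, each querying a fresh player from a freshly drawn group $J_\ell$, and then deal carefully with the possibility that the random group indices $J_\ell$ collide (a group can run out of unused players), a bookkeeping step your sketch does not address; alternatively, \cite{AcharyaCT20inf} obtain the $\Omega(1)$-per-group-of-$M$-players guarantee via a genuinely different rejection rule that looks at \emph{all} players' reports within a group and uses an additional independent group to debias the acceptance event (so that the acceptance probability $\prod_k(1-\bp(B_k))$ is uniform across candidate symbols), rather than by a single uniform draw of $J$. Either route fills the gap; as written, the ``constant expected mass'' step does not.
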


\paragraph{Combining the ingredients.} We now discuss how the three ideas come together. To mimic the classical estimator~\eqref{eq:central:nonadapt}, a player with sample $X$ would ideally like to communicate $\set{\phi_{\high,k}(X)}_{k \in \Z}$, but cannot do so due to communication constraints. Wavelet-induced sparsity (\cref{clm:sparsity}) ensures that communicating the bin (out of $2^H$ possible bins) in which $X$ lies is tantamount to identifying the set of at most $d\eqdef 2(A+2)$ indices $k$ for which $\phi_{\high,k}(X)$ is possibly non-zero. Moreover, the player can quantize (unbiasedly) the vector containing values of $\phi_{\high,k}(X)$ at these indices using \cref{alg:quantizer}, whose output is one of $2d$ possibilities (\cref{clm:quantizer}). Thus, overall, using an alphabet of size at most $2^{\high} \cdot (2d) = O(2^\high)$, a player can communicate an unbiased estimate of $\set{\phi_{\high,k}(X)}_{k \in \Z}$. It can be shown that a density estimate based on these unbiased estimates from $n$ players still achieves centralized minimax rates (up to constants).
However, if $2^\numbits < 4(A+2)2^{\high}$, the players cannot send these estimates directly to the referee. In this case, the players and the referee use the distributed simulation protocol $\textsc{DistrSim}_\numbits$  (\cref{thm:simulation}), which, effectively, enables the referee to simulate $O(n2^\numbits/2^H)$ i.i.d.\ realizations of unbiased estimates of $\set{\phi_{\high,k}(X)}_{k \in \Z}$. The referee can now output a density estimate based on these simulated estimates. The degradation in minimax rates under communication constraints is due to the fact that the referee has only $O(n2^\numbits/2^H)$ realizations of unbiased estimates of $\set{\phi_{\high,k}(X)}_k$, instead of $n$. \medskip

We now give details of the idea outlined above. The resulting estimator (``single-level estimator'') is a communication-constrained version of the classical estimator given in~\eqref{eq:central:nonadapt}. We then describe an adaptive estimator (``multi-level estimator''), which is a communication-constrained version of the classical adaptive estimator~\eqref{eq:central:adapt}.

\subsection{Single-level estimator}
The $n$ players and the referee agree beforehand on the following: wavelet functions $\phi, \psi$; $\high \in \Z_+$; partition $[0,1] = \bigcup_{t=0}^{2^{\high}-1} E_t^{(\high)}$ as in~\eqref{eqn:bins}; collections of indices $\cA_t^{(\high)}$, $t \in \ibrac{0,2^{\high}-1}$ as in~\eqref{eqn:At}. For every $t$, the indices in $\cA_t^{(\high)}$ are arranged in ascending order.

\paragraph{Player's side (\cref{alg:singlevel:players}).}
Each player carries out two broad steps: (i)~quantization; and (ii)~simulation.

\begin{algorithm}[htp]
	\begin{algorithmic}[1]
		\For{$i = 1,\ldots,n$}
		\State Player $i$ computes $Z_i \eqdef (B_i,Q(V_i))$, where: (i) $B_i$ is the bin in which $X_i$ lies; (ii) $Q(V_i)$ is an unbiased quantization of the vector $V_i \eqdef \set{2^{-\high/2}\phi_{\high,k}(X_i)}_{k \in \cA_{B_i}^{(\high)}}$.  \Comment{\textbf{Quantization}}
		
		\State Player $i$ computes $\numbits$-bit message $Y_i$ corresponding to $Z_i$ as per $\textsc{DistrSim}_\numbits$ (\cref{thm:simulation}), and sends it to the referee.  \Comment{\textbf{Simulation}}
		\EndFor	
	\end{algorithmic}
	\caption{Single-level estimator (Players)}
	\label{alg:singlevel:players}
\end{algorithm}

The scaling by $2^{-\high/2}$ in the definition of $V_i$ (line 2 in \cref{alg:singlevel:players}) ensures that $\norm{V_i}_\infty \leq \norm{\phi}_\infty$, which is a constant. This enables the use of \cref{alg:quantizer} to compute quantization of $V_i$. Overall, computing $Z_i = (B_i,Q(V_i))$ involves two quantizations: $B_i$ can be seen as a quantized version of $X_i \in [0,1]$; $Q(V_i)$ is a quantized version of $\set{\phi_{\high, k}(X_i)}_k$. Moreover, for each $i \in \ibrac{n}$, $Z_i \in \cZ^{(\high)}$, where 
$
\cZ^{(\high)} \eqdef \ibrac{0,2^{\high}-1} \times \set{\pm Be_1,\ldots,\pm Be_d}
$
(with $d \leq 2(A+2)$, by \cref{clm:sparsity}), so that
$
\dabs{\cZ^{(\high)}} \leq 4(A+2) \: 2^{\high} = O(2^{\high}).
$ \medskip

Thus, $Z_1,\ldots,Z_n$ are i.i.d.\ samples (since $X_1,\ldots,X_n$ are i.i.d.) from a $\dabs{\cZ^{(\high)}}$-ary distribution (call it $\bp_{Z^{(\high)}}$) distributed across $n$ players, where \smash{$\dabs{\cZ^{(\high)}} = O(2^{\high})$}. Since a player can send only $\numbits$ bits, player $i$ cannot send $Z_i$ directly if $2^\numbits < \dabs{\cZ^{(\high)}}$. In this case, player $i$ computes an $\numbits$-bit message $Y_i$ according to the distributed simulation protocol $\textsc{DistrSim}_\numbits$, and sends $Y_i$ to the referee.

\paragraph{Referee's side (\cref{alg:singlevel:referee}).}
The referee, using the simulated i.i.d.\ samples from $\bp_{Z^{(\high)}}$, computes the density estimate similar to the classical estimate~\eqref{eq:central:nonadapt}. This is possible because the $m = O(n2^\numbits/2^H)$ simulated samples are i.i.d.\ realizations of unbiased quantization of $\set{\phi_{\high,k}(X)}_{k \in \Z}$.

\begin{algorithm}[htp]
	\begin{algorithmic}[1]
		\State From $Y_1,\ldots,Y_n$, referee obtains $m = O(n2^\numbits/\dabs{\cZ^{(\high)}}) = O(n2^\numbits/ 2^{\high})$ i.i.d.\ samples $Z'_1,\ldots,Z'_m \sim \bp_{Z^{(\high)}}$ as per $\textsc{DistrSim}_\numbits$, where $Z'_i = (B'_i, Q'_i) \in \cZ^{(\high)}$.
		
		\For{$i = 1,\ldots,m$}		
		\State Referee computes
		\begin{equation} \label{eqn:slcoeff}
			\widehat{\phi}_{\high, k}^{(i)} \eqdef 
			\begin{cases}
				2^{\high/2} \; Q'_i(k) & \text{if } k \in \cA_{B'_i}^{(\high)} \\
				0 & \text{otherwise}, 
			\end{cases}
		\end{equation}	
		where $Q'_i(k)$ is the entry in $Q'_i$ corresponding to index \smash{$k \in \cA_{B'_i}^{(\high)}$}. \Comment{Scaling by $2^{\high/2}$ is to negate the scaling by $2^{-\high/2}$ used in definition of $V_i$ on the players' side.}
		\EndFor
		
		\State Referee outputs density estimate
		\begin{equation} \label{eqn:sldensity}
			\widehat{f} = \sum_{k \in \Z} \widehat{\alpha}_{\high,k} \phi_{\high,k},
			\quad \text{where } \ \widehat{\alpha}_{\high,k} = \frac{1}{m} \sum_{i=1}^{m} \widehat{\phi}_{\high,k}^{(i)}, \ k \in \Z.
		\end{equation}
	\end{algorithmic}
	\caption{Single-level estimator (Referee)}
	\label{alg:singlevel:referee}
\end{algorithm}
 
\paragraph{Result.} For $H$ such that \smash{$2^H \asymp \min\{ (n2^\numbits)^{\frac{1}{2s+2}}, n^{\frac{1}{2s+1}} \}$}, the single-level estimator recovers the guarantees in \cref{theo:ub:nonadaptive} (see~\cref{supp:singlevel} for details). The estimator is nonadaptive because setting $H$ requires knowing Besov parameter $s$. Further, note that the estimator is indeed noninteractive, as player $i$'s message $Y_i$ does not depend on messages $Y_1,\ldots,Y_{i-1}$.

\subsection{Multi-level estimator: An adaptive density estimator}
The key observation in designing our multi-level estimator is that different coefficients need to be recovered with different accuracy. We enable this by dividing players into groups for estimating different coefficients, and using a different level of quantization for each group. This is in contrast to simply mimicking the classical adaptive estimator~\eqref{eq:central:adapt}, which would suggest that a player with sample $X$ should quantize and communicate information about $\set{\phi_{\low, k}(X)}_k, \set{\set{\psi_{J,k}(X)}_k}_{J \in \ibrac{\low,\high}}$. Instead, we do the following: 
Divide $n$ players into $\high - \low + 1$ groups of equal size (so,
each group has $n' = \frac{n}{\high-\low+1}$ players). Label the
groups $\low,\low+1,\ldots,\high$. Players in group $L$ only focus on
$\set{\phi_{\low, k}(X)}_k, \set{\psi_{\low, k}(X)}_k$. Players in
group $J$, $J \in \ibrac{\low+1,\high}$, only focus on $\set{\psi_{J,
k}(X)}_k$. Moreover, players in group $J$, $J \in \ibrac{\low,\high}$,
quantize their sample $X$ using $2^J$ uniform bins. As before, by
\cref{clm:sparsity}, this is tantamount to identifying at most a
constant number of indices for which the wavelet function evaluates to
a non-zero value (since players in group $J$ only consider
$\phi_{J,k}$ or $\psi_{J, k}$). The player then quantizes the vector
containing these values using \cref{alg:quantizer}, before
using distributed simulation.  \medskip

The $\ns$ players and the referee agree beforehand on the following: wavelet functions $\phi, \psi$; $\low,\high \in \Z_+$; division of players into $\high - \low + 1$ groups. Further, for each $J \in \ibrac{\low,\high}$, the $n'$ players in group $J$ and the referee agree on the following: partition $[0,1] = \bigcup_{t=0}^{2^{J}-1} E_t^{(J)}$ as in~\eqref{eqn:bins}; collection of indices $\cA_t^{(J)}, \cB_t^{(J)}$, $t \in \ibrac{0,2^{J}-1}$, as in~\eqref{eqn:At},~\eqref{eqn:Bt}. For every $J,t$, the indices in $\cA_t^{(J)}, \cB_t^{(J)}$ are arranged in ascending order.

\paragraph{Player's side (\cref{alg:multlevel:players}).}
Label players in group $J$ as $(1,J),\ldots,(n',J)$. We denote by $X_{i,J}$ the sample with player $(i,J)$. Essentially, players in group $J$ run quantization and simulation steps as in the single-level algorithm (\cref{alg:singlevel:players}), with $H$ replaced by $J$.

\begin{algorithm}[htp]
	\begin{algorithmic}[1]
		\For{$J = \low,\low+1,\ldots,\high$}
		\For{$i = 1,\ldots,n'$}
		\State Player $(i,J)$ computes $Z_{i,J} = (B_{i,J},Q(V_{i,J}))$, where: (i) $B_{i,J}$ is the bin (out of $2^J$ bins) in which $X_{i,J}$ lies; (ii) $Q(V_{i,J})$ is an unbiased quantization of the vector $V_{i,J}$, where
		\[V_{i,J} \eqdef 
		\begin{cases}
			\set{2^{-J/2}\psi_{J,k}(X_{i,J})}_{k \in \cB_{B_{i,J}}^{(J)}}
				&\text{if } J \in \ibrac{\low+1,\high}, \\
			{\tt CONCAT}\big({ \set{2^{-\low/2}\phi_{\low,k}(X_{i,\low})}_{k \in \cA_{B_{i,\low}}^{(\low)}},
			\set{2^{-\low/2}\psi_{\low,k}(X_{i,\low})}_{k \in \cB_{B_{i,\low}}^{(\low)}} } \big)
				&\text{if } J = \low.
		\end{cases}
		\]	
		(${\tt CONCAT}$ denotes concatenation of two vectors.)  \Comment{\textbf{Quantization}}

		\State Player $(i,J)$ computes $\numbits$-bit message $Y_{i,J}$ corresponding to $Z_{i,J}$ as per $\textsc{DistrSim}_\numbits$, and sends it to the referee.  \Comment{\textbf{Simulation}}
		\EndFor
		\EndFor	
	\end{algorithmic}
	\caption{Multi-level estimator (Players)}
	\label{alg:multlevel:players}
\end{algorithm}

\paragraph{Referee's side (\cref{alg:multlevel:referee}).} 
For players in group $J$, $Z_{i,J} \in \cZ^{(J)}$, where $\abs{\cZ^{(J)}} = O(2^{J})$. Thus, after distributed simulation, the referee obtains $m_J = O(n' 2^\numbits/2^J) = O(n 2^\numbits/(H-L+1)2^J)$ samples from players of group $J$. Note that, higher the $J$, fewer the simulated samples; this dependence on $J$ of the number of samples available with the referee is one of the major differences between the classical and the distributed setting.
Finally, using the simulated samples from players of every group, referee computes a density estimate similar to the adaptive classical estimator~\eqref{eq:central:adapt}, with threshold value $t_J = \kappa \sqrt{J/m_J}$, for a constant $\kappa$.

\begin{algorithm}[htp] 
	\begin{algorithmic}[1]
		\For{$J = \low,\low+1,\ldots,\high$}
		\State From $Y_{1,J},\ldots,Y_{n',J}$, referee obtains $m_J = O(n2^\numbits/(H-L+1) 2^{J})$ i.i.d.\ samples 
		$Z'_{1,J},\ldots,Z'_{m_J,J} \sim \bp_{Z^{(J)}}$ as per $\textsc{DistrSim}_\numbits$, where $Z'_{i,J} = (B'_{i,J}, Q'_{i,J}) \in \cZ^{(J)}$.
		
		\For{$i = 1,\ldots,m_J$}
		\If{$J = \low$}
		\State Referee computes $\set{\widehat{\phi}_{\low,k}^{(i)}}_{k \in \Z}$ as
		$
			\widehat{\phi}_{\low, k}^{(i)} \eqdef 
			\begin{cases}
				2^{\low/2} \; Q'_{i,\low}(k) & \text{if } k \in \cA_{B'_{i,\low}}^{(\low)}\\
				0 & \text{ otherwise.}
			\end{cases}
		$
		\EndIf		
		
		\State Referee computes $\set{\widehat{\psi}_{J,k}^{(i)}}_{k \in \Z}$ as
		$
			\widehat{\psi}_{J, k}^{(i)} \eqdef 
			\begin{cases}
				2^{J/2} \; Q'_{i,J}(k) & \text{if } k \in \cB_{B'_{i,J}}^{(J)}\\
				0 & \text{ otherwise.}
			\end{cases}
		$
		\EndFor
		\EndFor
		
		\State Referee outputs density estimate 
		\begin{equation} \label{eqn:mldensity}
			\widehat{f} = \sum_{k} \widehat{\alpha}_{\low,k} \phi_{\low,k} + \sum_{J=\low}^{\high} \sum_{k} \tilde{\beta}_{J,k} \psi_{J,k},
		\end{equation}
	where $\widehat{\alpha}_{\low,k} = \frac{1}{m_\low} \sum_{i=1}^{m_\low} \widehat{\phi}_{\low,k}^{(i)}$, $\ \widehat{\beta}_{J,k} = \frac{1}{m_J} \sum_{i=1}^{m_J} \widehat{\psi}_{J,k}^{(i)}$, $\ \tilde{\beta}_{J,k} = \widehat{\beta}_{J,k} \ind{|\widehat{\beta}_{J,k}| \geq t_J \eqdef \kappa\sqrt{J/m_J}}$.
	\end{algorithmic}	
	\caption{Multi-level algorithm (Referee)}
	\label{alg:multlevel:referee}
\end{algorithm}

\paragraph{Result.} For $L,H$ satisfying $2^L \asymp  \min\{(n2^\numbits)^{\frac{1}{2(N+1)+2}},n^{\frac{1}{2(N+1)+1}}\}$ and $2^H \asymp  \min\{\sqrt{(n2^\numbits)}/\log (n2^\numbits),n/\log n\}$, the multi-level estimator yields the guarantees in \cref{theo:ub:adaptive} (see~\cref{supp:multlevel} for details) as long as $s \in (1/p,N+1)$ (recall that $N$ is the regularity of the wavelet basis). Since $L,H$ do not depend on specific Besov parameters, the estimator is adaptive. Moreover, it is noninteractive.

	\section{Lower Bounds} \label{sec:lowerbound}
		We conclude with a description of our information-theoretic lower bounds (\cref{theo:lb}) for the minimax loss $\cL_r^*(\ns, \numbits, p,q,s)$, which applies to the broader class of interactive protocols (recall that our matching upper bounds are obtained by noninteractive ones); the details can be found in~\cref{supp:lowerbound}. To derive lower bounds, we consider a family of probability distributions $\cP$ parameterized by $\set{-1,1}^d$ for some $d \in \Z_+$; that is, $\cP = \{\p_z : z \in \set{-1,1}^d\}$, where $\p_z$ has density $f_z$. Moreover, we specify a prior $\pi$ on $Z = (Z_1,\ldots,Z_d) \in \set{-1,1}^d$, defined as $Z_i \sim {\tt Rademacher}(\tau)$ independently for each $i \in [d]$, for some $\tau \in (0,1/2]$. We then consider the following scenario: \medskip

For $Z \sim \pi$, let $X_1,\ldots,X_{\ns}$ be i.i.d.\ samples from $\p_Z$ distributed across $\ns$ players. Let $Y_1,\ldots,Y_{\ns}$ be $\numbits$-bit messages sent by the players (possibly interactively) to the referee. Denote by $\p^{Y^n}_{+i}$ (resp. $\p^{Y^n}_{-i}$) the joint distribution of $Y_1,\ldots,Y_{\ns}$, given $Z_i = 1$ (resp. $Z_i = -1$). That is,
\begin{equation}
  \label{eq:def:p+1:p-1}
\p^{Y^n}_{+i} = \frac{1}{\tau} \sum_{z:z_i=1}\pi(z)\p^{Y^n}_z, \quad
\p^{Y^n}_{-i} = \frac{1}{1-\tau} \sum_{z:z_i=-1}\pi(z)\p^{Y^n}_z,
\end{equation}
where $\p^{Y^n}_z$ is the joint distribution of $Y_1,\ldots,Y_n$, given $Z=z$. \medskip

In this scenario, we will analyze the ``average discrepancy''
$\frac{1}{d}\sum_{i=1}^{d}\totalvardist{\p^{Y^n}_{-i}}{\p^{Y^n}_{+i}}$, where $\totalvardist{\p}{\q}$ denotes the total variation distance between $\p$ and $\q$. On the one hand, a result from \cite{AcharyaCT20} will give us an \emph{upper bound} on this average discrepancy as a function of $\ns$ and $\numbits$ which holds for \emph{any} interactive protocol generating $Y_1,\ldots,Y_{\ns}$ (\cref{thm:AcharyaCT20}). On the other hand, we will derive a \emph{lower bound} on average discrepancy (as a function of the error rate $\dst$) as follows: Consider a communication-constrained density estimation algorithm (possibly interactive) which outputs $\hat{f}$ satisfying $\sup_{f \in \cB(p,q,s)} \bE{f}{\| \hat f-f \|_r^r} \leq \dst^r$. We will show that one can use the messages $Y_1,\ldots,Y_{\ns}$ generated by this algorithm to solve, for each $i \in \ibrac{d}$, the binary hypothesis testing problem of deciding whether $Z_i = 1$ or $Z_i=-1$. This, in turn, will imply a lower bound on $\frac{1}{d}\sum_{i=1}^{d}\totalvardist{\p^{Y^n}_{-i}}{\p^{Y^n}_{+i}}$. Putting together the upper and lower bounds on average discrepancy will give us a lower bound on $\dst$. \medskip

The parameterized family of distributions $\cP$ is constructed as follows: Let $f_0$ be a function supported on $[0,1]$. Let $I_1,\ldots,I_d \subseteq [0,1]$ be mutually disjoint intervals of equal length. Let $\psi_i$ be a ``bump'' function supported on interval $I_i$, where $\psi_i$'s are all translations of the same bump function. Then, for $z = (z_1,\ldots,z_d) \in \set{-1,1}^d$, we define $p_z$ to be a probability distribution with density $f_z$, defined as the ``baseline'' $f_0$ perturbed by adding (a rescaling of) the bump $\psi_i$ according to the value of $z_i$. In more detail, to get the desired lower bounds, we distinguish two cases depending on whether $r<(s+1)p$, and construct two families of distributions: $\cP_1$ (when $r < (s+1)p$) and $\cP_2$ (when $r \geq (s+1)p$).

\begin{figure}
    \centering
    \begin{minipage}{0.49\textwidth}
        \centering
        \includegraphics[width=1.0\textwidth]{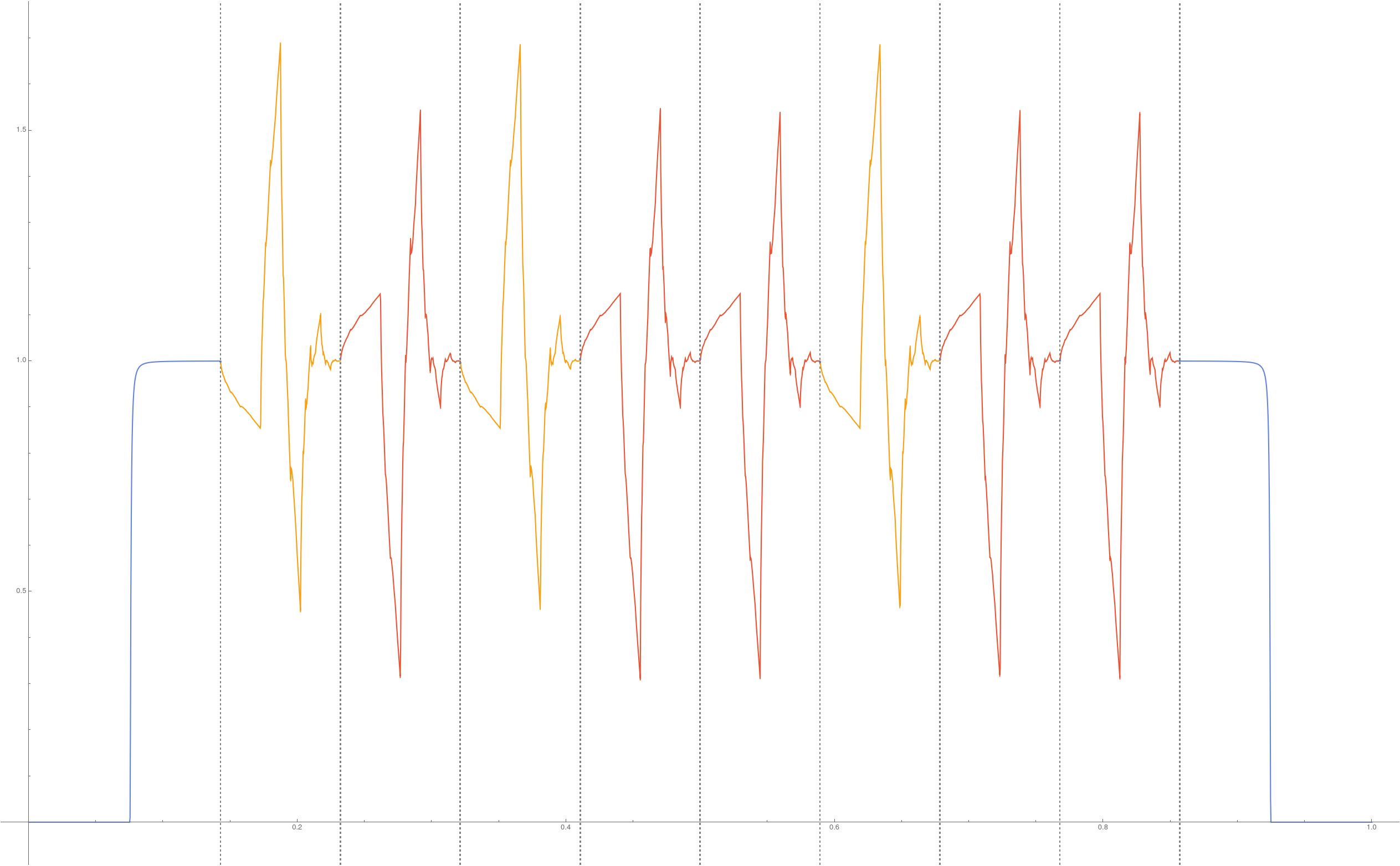}
        \caption{\label{fig:lb1}\small{}An illustration of the construction for the first lower bound, $\cP_1$, in the regime $r < (s+1)p$. Each of the $d=8$ bumps depicted is flipped depending of the value of $z_i$ for this particular instance of the construction (here, $z=(+1,-1,+1,-1,-1,+1,-1,-1)$).}
    \end{minipage}\hfill
    \begin{minipage}{0.49\textwidth}
        \centering
        \includegraphics[width=1.0\textwidth]{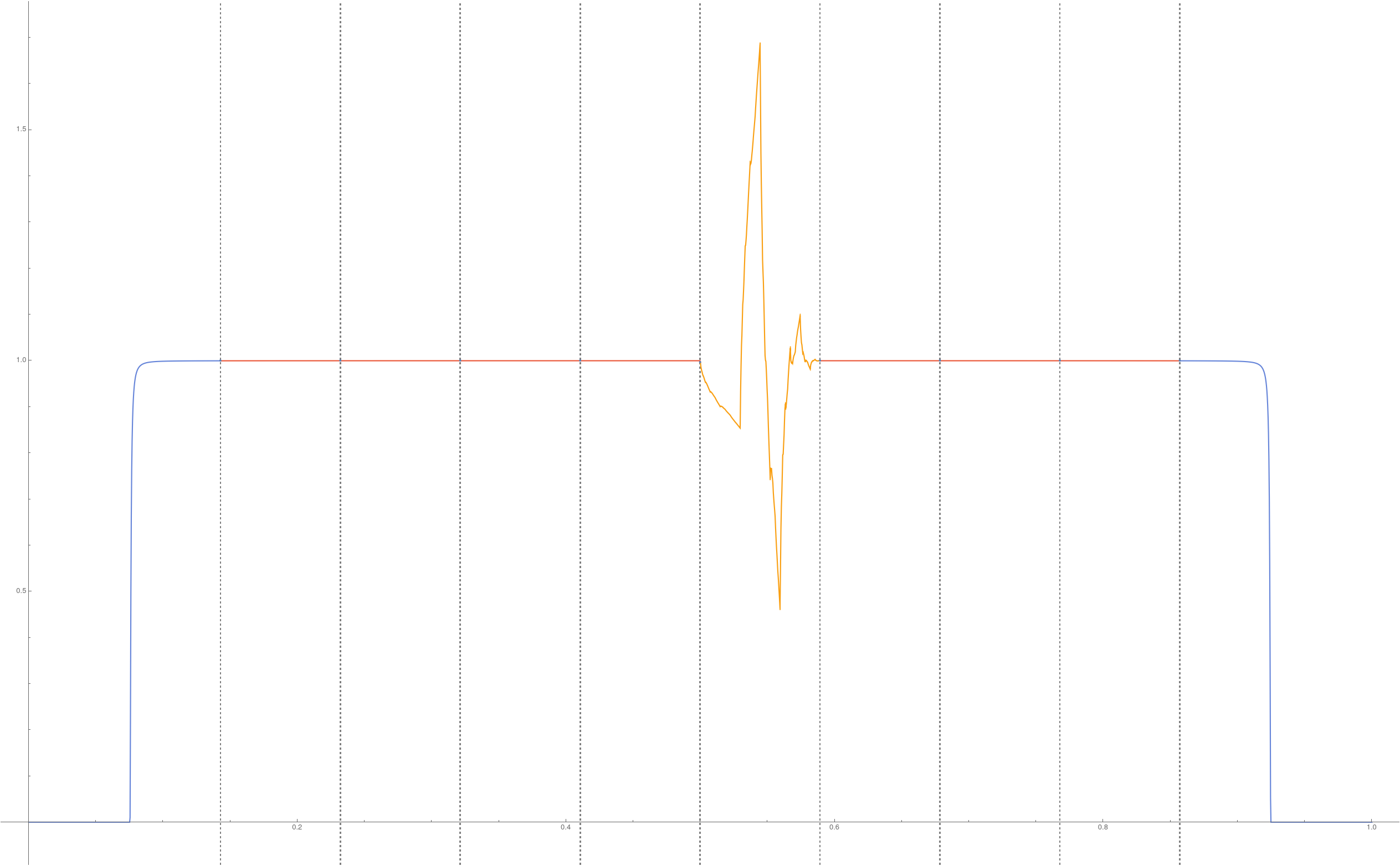}
        \caption{\label{fig:lb2}\small{}An illustration of the construction for the second lower bound, $\cP_2$, in the regime $r \geq (s+1)p$. Each of the $d=8$ bumps depicted is either present or not, depending of the value of $z_i$ for this particular instance of the construction (here, $z=(-1,-1,-1,-1,+1,-1,-1,-1)$).}
    \end{minipage}
\end{figure}

\begin{itemize}
\item For $\cP_1$, we use a uniform prior on $Z = (Z_1,\ldots,Z_d)$, \ie $Z$ has independent Rademacher coordinates, and set
\[
  f_z = f_0 + \gamma\sum_{i=1}^{d}z_i \psi_i
\] for some suitably small parameter $\gamma>0$. That is, the baseline density $f_0$ has disjoint bumps, which are either $\psi_i$ or $-\psi_i$ depending on the value of $z_i$. See~\cref{fig:lb1} for an illustration, and~\cref{sec:lb1} for the details.
\item For $\cP_2$, we use a non-uniform (``sparse'') prior on $Z$, where $Z_1,\dots,Z_d$ are independent with parameter $1/d$, and we set 
\[
  f_z = f_0 + \gamma\sum_{i=1}^{d}(1+z_i) \psi_i
\] (so that bump $\psi_i$ only appears if $Z_i=1$). See~\cref{fig:lb1} for an illustration, and~\cref{sec:lb1} for details.
\end{itemize}
Applying to these constructions the method described above allows us to derive the lower bounds of \cref{theo:lb}.

\begin{remark}
We note that a similar proof would enable us to derive an analogous result for local privacy, thus extending the lower bounds of~\cite{ButuceaDKS20} (which are restricted to noninteractive protocols) to the interactive setting.
\end{remark}

	\printbibliography
	\appendix
	\section{Useful facts} \label{supp:facts}
		We recall some results that will be used in our analysis.

\subsection{Useful facts about Besov spaces}
We record a few facts about Besov spaces that will be used in our analysis. Throughout, we assume that $f \in \cB(p,q,s)$ with $\norm{f}_{spq} \leq 1$ and ${\rm supp}(f) \subseteq [0,1]$. The following fact is apparent from our discussion on wavelets.
\begin{fact} \label{fact:waveletlevel}
	Let the wavelet expansion of $f$ be as in \eqref{eqn:waveletexp}. For $\high \geq \low$, define $f^{(\high)} := \sum_{k \in \Z} \alpha_{\low,k} \phi_{\low,k} + \sum_{j = \low}^{\high-1} \sum_{k \in \Z} \beta_{j,k} \psi_{j,k}$. Then, $f^{(\high)} = \sum_{k \in \Z} \alpha_{\high,k} \phi_{\high,k}$.
\end{fact}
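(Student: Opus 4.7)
The plan is to exploit the multiresolution structure set up in the wavelet preliminaries, specifically the relation $V_{j+1} = V_j \oplus W_j$, and then use the uniqueness of the orthogonal projection onto $V_H$.

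First, I would iterate the relation $V_{j+1} = V_j \oplus W_j$ from $j=L$ up to $j=H-1$. A straightforward induction on $H-L$ gives
\[
V_H \;=\; V_L \,\oplus\, W_L \,\oplus\, W_{L+1} \,\oplus\, \cdots \,\oplus\, W_{H-1}.
\]
The base case $H=L$ is trivial (the claim is that $\sum_k \alpha_{L,k}\phi_{L,k} = \sum_k \alpha_{L,k}\phi_{L,k}$), and the inductive step just uses $V_{H} = V_{H-1}\oplus W_{H-1}$.

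Next, I would interpret both sides of the claimed identity as orthogonal projections of $f$ onto $V_H$. By construction, $\{\phi_{L,k}\}_{k\in\Z}$ is an orthonormal basis of $V_L$ and $\{\psi_{j,k}\}_{k\in\Z}$ is an orthonormal basis of $W_j$ for each $j\ge L$; since these spaces are mutually orthogonal and their direct sum equals $V_H$, the collection $\{\phi_{L,k}\}_k \cup \bigcup_{j=L}^{H-1}\{\psi_{j,k}\}_k$ is an orthonormal basis of $V_H$. The coefficients $\alpha_{L,k} = \langle f,\phi_{L,k}\rangle$ and $\beta_{j,k} = \langle f,\psi_{j,k}\rangle$ are exactly the inner products of $f$ with these basis elements, so $f^{(H)}$ is the orthogonal projection $P_{V_H}f$ of $f$ onto $V_H$.

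On the other hand, $\{\phi_{H,k}\}_{k\in\Z}$ is by assumption also an orthonormal basis of $V_H$, and $\alpha_{H,k} = \langle f,\phi_{H,k}\rangle$, so $\sum_k \alpha_{H,k}\phi_{H,k}$ is likewise $P_{V_H}f$. Since the orthogonal projection onto a closed subspace is unique, the two expansions must coincide, which is the claimed identity. There is no real obstacle here: the only thing to be slightly careful about is that the series converge in $\cL^2$ (which follows from Bessel/Parseval applied to the orthonormal family in $V_H$), and that the identification of $f^{(H)}$ with the $V_H$-projection uses the orthogonality between $V_L$ and the $W_j$'s for $j\ge L$, which is built into the MRA definition.
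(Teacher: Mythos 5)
Your argument is correct and is precisely the standard multiresolution-analysis reasoning the paper alludes to when it declares the fact ``apparent from our discussion on wavelets'': iterate $V_{j+1}=V_j\oplus W_j$ to identify $V_H = V_L\oplus W_L\oplus\cdots\oplus W_{H-1}$, and then observe that both expressions are the orthogonal projection of $f$ onto $V_H$ written in two different orthonormal bases. The only point worth flagging is that you implicitly use that the direct sum $V_{j+1}=V_j\oplus W_j$ is an \emph{orthogonal} direct sum (equivalently, that each $W_j$ is the orthogonal complement of $V_j$ inside $V_{j+1}$); the paper's statement of the MRA structure phrases this only as an algebraic direct sum, but orthogonality is the standard convention and is what makes the coefficients $\alpha_{j,k}=\langle f,\phi_{j,k}\rangle$, $\beta_{j,k}=\langle f,\psi_{j,k}\rangle$ coincide with projection coefficients, so your use of it is appropriate.
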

Since ${\rm supp}(f) \subseteq [0,1]$, and ${\rm supp}(\phi_{j,0}), {\rm supp}(\psi_{j,0}) \subseteq [-A2^{-j},A2^{-j}]$, there is no overlap between ${\rm supp}(f)$ and ${\rm supp}(\phi_{j,k}), {\rm supp}(\psi_{j,k})$ for all but a finite number of indices $k$. In particular, we have the following.
\begin{fact} \label{fact:coeffnonzero}
	Let the wavelet expansion of $f$ be as in \eqref{eqn:waveletexp}. Then, for any given $j \in \Z_+$, there are $O(2^j)$ translation indices $k$ such that $\phi_{j,k}(x)$ or $\psi_{j,k}(x)$ is possibly non-zero, where $x \in {\rm supp}(f)$.
\end{fact}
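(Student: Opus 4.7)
The statement is essentially a bookkeeping observation about how the dyadic rescaling of wavelets interacts with a bounded support set, so the plan is simply to translate the support hypothesis on $\phi$ and $\psi$ into a support hypothesis on each $\phi_{j,k}$ and $\psi_{j,k}$, and then count how many translates can overlap $[0,1]$. Throughout I will use only the definitions $\phi_{j,k}(x)=2^{j/2}\phi(2^j x-k)$, $\psi_{j,k}(x)=2^{j/2}\psi(2^j x-k)$ together with the standing assumption $\supp(\phi),\supp(\psi)\subseteq[-A,A]$.

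First I would observe that $\phi_{j,k}(x)\neq 0$ can only happen when $2^j x-k\in[-A,A]$, i.e.\ $k\in[2^j x-A,\,2^j x+A]$, and similarly for $\psi_{j,k}$. Equivalently, $\supp(\phi_{j,k})\subseteq[(k-A)2^{-j},(k+A)2^{-j}]$ and $\supp(\psi_{j,k})\subseteq[(k-A)2^{-j},(k+A)2^{-j}]$. Since $\supp(f)\subseteq[0,1]$, only those $k\in\Z$ for which this translated support intersects $[0,1]$ can contribute. This intersection is nonempty precisely when $(k-A)2^{-j}\leq 1$ and $(k+A)2^{-j}\geq 0$, i.e.\ when $k\in[-A,\,2^j+A]$.

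The second and final step is then a counting step: the set of integers in $[-A,2^j+A]$ has cardinality at most $2^j+2A+1$. Since $A$ is fixed (depending only on the regularity parameter $N$ of the wavelet basis), this is $O(2^j)$, which gives exactly the claim for both $\phi_{j,k}$ and $\psi_{j,k}$ simultaneously.

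There is no real obstacle here; the only thing to be careful about is that the constant hidden in the $O(\cdot)$ is $1+2A$ and thus depends on the choice of wavelet basis (through $A$), but not on $j$, $k$, or $f$, which is the content the rest of the paper uses. One could alternatively phrase this as the observation that each point $x\in[0,1]$ lies in the support of at most $2A+1$ functions $\phi_{j,\cdot}$ (and likewise for $\psi_{j,\cdot}$), which is the dual statement and is how the bound is typically invoked in the analysis of the estimators.
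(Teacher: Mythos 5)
Your proposal is correct and is the same argument the paper sketches (the paper only states it in a single sentence just before the Fact, noting that $\supp(\phi_{j,k}),\supp(\psi_{j,k})\subseteq[-A2^{-j},A2^{-j}]$ translated by $k2^{-j}$ can overlap $[0,1]$ for only finitely many $k$); you have merely carried out the elementary counting explicitly, arriving at the bound $2^j+2A+1=O(2^j)$.
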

For $f \in \cB(p,q,s)$, it is clear from the definition of Besov norm that the wavelet coefficients must decay sufficiently fast. More precisely, we have the following.
\begin{fact}
	\label{fact:useful:bound:beta:norm}
	If $f\in\besov(p,q,s)$, then
	\[
	\lim_{j\to\infty}2^{jp(s+\frac{1}{2}-\frac{1}{p})}\sum_{k\in \Z} |\beta_{j,k}|^p = 0
	\]
	and in particular there exists $C>0$ such that
	\[
	\norm{\beta_j}_p^p = \sum_{k\in \Z} |\beta_{j,k}|^p \leq C\cdot 2^{-jp(s+\frac{1}{2}-\frac{1}{p})}.
	\]
\end{fact}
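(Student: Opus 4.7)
The statement is a direct consequence of the definition of the Besov norm in~\eqref{eq:besov:norm}, so the plan is to unpack that definition and read off both conclusions from the convergence of the defining series. Set
\[
 a_j \;\eqdef\; 2^{j(s+\frac{1}{2}-\frac{1}{p})}\,\|\beta_{j\cdot}\|_p, \qquad j\geq 0,
\]
so that the Besov semi-norm part of $\|f\|_{pqs}$ is exactly the $\ell^q$-norm of the sequence $(a_j)_{j\geq 0}$. The hypothesis $f\in\besov(p,q,s)$ gives $(a_j)\in\ell^q$, and this single fact drives everything.

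First, I would derive the uniform bound. Since $(a_j)\in\ell^q$ (for any $q\in[1,\infty]$), the sequence is in particular bounded: there exists $C'>0$ with $a_j\leq C'$ for all $j$. Raising to the $p$-th power and rearranging yields
\[
 \|\beta_{j\cdot}\|_p^p \;\leq\; (C')^p\,2^{-jp(s+\frac{1}{2}-\frac{1}{p})},
\]
which is the second, ``in particular'' claim with $C=(C')^p$. The constant depends only on $\|f\|_{pqs}$ (which in our setup is bounded by $1$), so it is uniform over the Besov ball.

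Next, for the limit statement, I would use the fact that when $q<\infty$ the convergence of the series $\sum_{j}a_j^q$ forces its summand to tend to zero, so $a_j\to 0$, and hence
\[
 2^{jp(s+\frac{1}{2}-\frac{1}{p})}\sum_{k\in\Z}|\beta_{j,k}|^p \;=\; a_j^p \;\longrightarrow\; 0
\]
as $j\to\infty$, as claimed. (The case $q=\infty$ is not needed for the applications later in the paper, since the upper-bound arguments only invoke the boundedness form; if needed, one can reduce to a smaller $q$ using the standard embedding $\besov(p,q,s)\hookrightarrow\besov(p,q',s')$ for any $s'<s$ and $q'<\infty$.)

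\textbf{Where the difficulty lies.} There is essentially no obstacle: both statements are tautological consequences of $(a_j)\in\ell^q$ once the Besov norm is written in sequence form. The only mild subtlety is the $q=\infty$ edge case for the vanishing claim, which is handled by the embedding remark above and is not used elsewhere in our analysis.
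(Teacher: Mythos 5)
Your proof is correct and takes exactly the route the paper intends: the fact is an immediate reading of the sequence form of the Besov norm in~\eqref{eq:besov:norm}, with $(a_j)\in\ell^q$ giving boundedness (hence the ``in particular'' inequality) and, for $q<\infty$, $a_j\to 0$. The paper states this as an unproved background fact, so there is no separate argument to compare against. One small imprecision worth flagging: your embedding remark for $q=\infty$ ($\cB(p,\infty,s)\hookrightarrow\cB(p,q',s')$, $s'<s$) would only give $2^{j p(s'+\frac12-\frac1p)}\sum_k|\beta_{j,k}|^p\to 0$ with the weaker exponent $s'$, not the stated limit with exponent $s$; for $q=\infty$ the limit claim as literally written can fail (take $a_j\equiv 1$). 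As you correctly observe, this does not matter for the paper, since only the boundedness form of the fact is ever invoked (in bounding $E_{bb}$, $E_{sb}$, and $E_{ss}$), and boundedness holds for all $q\in[1,\infty]$.
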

The next fact quantifies the approximation error when the wavelet expansion of $f$ is truncated.
\begin{fact} \label{fact:centralbias}
	Let $f^{(\high)}$ be as in Claim \ref{fact:waveletlevel}. Then, for $r \geq 1$,
	\[
	\norm{f^{(\high)} - f}_r \leq C
	\begin{cases}
		2^{-\high s} &\text{if } r \leq p, \\
		2^{-\high (s-1/p+1/r)} &\text{if } r > p.
	\end{cases}
	\]
\end{fact}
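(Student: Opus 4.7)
By \cref{fact:waveletlevel}, the truncation error is exactly the tail of the wavelet expansion:
\[
f - f^{(H)} \;=\; \sum_{j \geq H}\sum_{k \in \Z} \beta_{j,k}\psi_{j,k}.
\]
The strategy is to bound each dyadic level $\|\sum_k \beta_{j,k}\psi_{j,k}\|_r$ by a geometrically decaying quantity and then sum via the triangle inequality (which is available since $r \geq 1$). The decay comes from combining the Besov-norm control on $\|\beta_{j\cdot}\|_p$ given by \cref{fact:useful:bound:beta:norm} with a level-wise bound translating $\ell_p$ coefficient decay into $\cL_r$ decay of the function.

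The key intermediate step is the standard bounded-overlap estimate for compactly supported wavelets: since $\supp(\psi_{j,k}) \subseteq [(k-A)2^{-j},(k+A)2^{-j}]$, at each point $x$ only $O(1)$ of the functions $\{\psi_{j,k}\}_k$ are nonzero. Combining this pointwise with the power-mean (Jensen) inequality and then integrating yields
\[
\Bigl\|\sum_{k} \beta_{j,k}\psi_{j,k}\Bigr\|_r \;\lesssim\; 2^{j(1/2 - 1/r)}\, \|\beta_{j\cdot}\|_r,
\]
using $\|\psi_{j,k}\|_r = 2^{j(1/2-1/r)}\|\psi\|_r$. From here one splits into the two regimes. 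When $r \leq p$, by \cref{fact:coeffnonzero} only $O(2^j)$ coefficients $\beta_{j,k}$ are nonzero, so H\"older gives $\|\beta_{j\cdot}\|_r \lesssim 2^{j(1/r-1/p)} \|\beta_{j\cdot}\|_p$; plugging in \cref{fact:useful:bound:beta:norm} gives $\|\sum_k \beta_{j,k}\psi_{j,k}\|_r \lesssim 2^{-js}$. When $r > p$, one uses instead the inclusion $\|\beta_{j\cdot}\|_r \leq \|\beta_{j\cdot}\|_p$ for sequence spaces, so that \cref{fact:useful:bound:beta:norm} yields $\|\sum_k \beta_{j,k}\psi_{j,k}\|_r \lesssim 2^{-j(s-1/p+1/r)}$.

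Finally, summing the geometric series $\sum_{j \geq H} 2^{-j\mu}$ with $\mu \in \{s,\, s - 1/p + 1/r\}$ gives the claimed rates $2^{-Hs}$ and $2^{-H(s-1/p+1/r)}$ respectively; in the second regime the exponent is positive because the condition $s > 1/p$ (built into the problem via the Besov embedding needed for $f$ to be a bounded density) ensures $s - 1/p + 1/r > 0$ whenever $r \geq 1$.

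\textbf{Main obstacle.} The one non-routine ingredient is the level-wise inequality $\|\sum_k \beta_{j,k}\psi_{j,k}\|_r \lesssim 2^{j(1/2-1/r)}\|\beta_{j\cdot}\|_r$; establishing this cleanly requires the bounded-overlap property of compactly supported wavelets together with a Jensen-type pointwise bound of the form $|\sum_k a_k \mathbf{1}_{\supp(\psi_{j,k})}(x)|^r \leq M^{r-1}\sum_k |a_k|^r \mathbf{1}_{\supp(\psi_{j,k})}(x)$, where $M$ depends only on $A$. Once this is in hand, the rest of the proof is a pair of short computations distinguishing $r \leq p$ from $r > p$.
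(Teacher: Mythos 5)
The paper does not give a proof of this fact; it is stated in the appendix as one of several ``useful facts'' imported from the classical wavelet literature (the surrounding facts explicitly cite equations from~\cite{DonohoJKP96}, and this one is the standard approximation estimate for Besov balls appearing, e.g., in Donoho--Johnstone--Kerkyacharian--Picard and in H\"ardle--Kerkyacharian--Picard--Tsybakov). So there is no paper-internal proof to compare against; your task reduces to checking whether your derivation is a valid one.

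Your argument is correct and is essentially the textbook proof. The chain of steps checks out: the identity $f - f^{(\high)} = \sum_{j\geq \high}\sum_k \beta_{j,k}\psi_{j,k}$ from \cref{fact:waveletlevel}; the bounded-overlap estimate
\[
\Bigl|\sum_k \beta_{j,k}\psi_{j,k}(x)\Bigr|^r \leq M^{r-1}\sum_k |\beta_{j,k}|^r\,|\psi_{j,k}(x)|^r
\]
(with $M=O(A)$ the maximum number of overlapping supports, valid by Jensen since at most $M$ summands are nonzero at any $x$); the rescaling $\norm{\psi_{j,k}}_r = 2^{j(1/2-1/r)}\norm{\psi}_r$; giving the level-wise bound $\bigl\|\sum_k\beta_{j,k}\psi_{j,k}\bigr\|_r \lesssim 2^{j(1/2-1/r)}\norm{\beta_{j\cdot}}_r$. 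For $r\leq p$, H\"older against the $O(2^j)$ nonzero coefficients (\cref{fact:coeffnonzero}) gives $\norm{\beta_{j\cdot}}_r\lesssim 2^{j(1/r-1/p)}\norm{\beta_{j\cdot}}_p$, and the exponents collapse to $-js$ after plugging in \cref{fact:useful:bound:beta:norm}. For $r>p$, monotonicity of $\ell_p$ norms gives $\norm{\beta_{j\cdot}}_r\leq\norm{\beta_{j\cdot}}_p$ directly, yielding exponent $-j(s-1/p+1/r)$. In both cases the summand decays geometrically (using the standing assumption $s>1/p$, which the paper imposes throughout, e.g.\ in \cref{fact:infinitynorm} and \cref{theo:ub:adaptive}), so the triangle inequality over $j\geq \high$ gives the claimed bounds. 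One cosmetic point: the two cases agree at $r=p$ since $s-1/p+1/r\to s$ there, so there is no boundary issue. Your proof is sound and self-contained, which is arguably preferable to the paper's ``standard fact'' treatment.
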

The next fact (from equation (15) in \cite{DonohoJKP96}) gives a bound on $\norm{f}_\infty$ when $\norm{f}_{spq} \leq 1$.
\begin{fact} \label{fact:infinitynorm}
	Let $s > 1/p$. Then
	\[
	\norm{f}_\infty \leq \paren{1 - 2^{-(s-1/p)q'}}^{1/q'}
	\]
	where $1/q + 1/q' = 1$.
\end{fact}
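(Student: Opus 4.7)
\textbf{Proof plan for \cref{fact:infinitynorm}.}

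The plan is to bound $\|f\|_\infty$ directly from the wavelet expansion \eqref{eqn:waveletexp}, level by level, and then sum across levels using H\"older's inequality in such a way that the Besov norm shows up naturally through its definition \eqref{eq:besov:norm}. By the triangle inequality,
\[
\|f\|_\infty \leq \Bigl\|\sum_{k\in\Z}\alpha_{0,k}\phi_{0,k}\Bigr\|_\infty
+ \sum_{j\geq 0}\Bigl\|\sum_{k\in\Z}\beta_{j,k}\psi_{j,k}\Bigr\|_\infty,
\]
so it suffices to control each level term and then sum.

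The first step is a pointwise bound at a fixed level $j$. By the compact-support assumption $\supp\psi\subseteq[-A,A]$, for every $x$ only $O(1)$ indices $k$ contribute to $\psi_{j,k}(x)$ (as in \cref{clm:sparsity} and \cref{fact:coeffnonzero}), and each such $\psi_{j,k}(x)$ is bounded in magnitude by $2^{j/2}\|\psi\|_\infty$. Combining with H\"older over $k$ (or directly via $\ell^\infty\leq\ell^p$), I expect
\[
\Bigl\|\sum_{k\in\Z}\beta_{j,k}\psi_{j,k}\Bigr\|_\infty
\;\lesssim\; 2^{j/2}\,\|\beta_{j\cdot}\|_p
\;=\; 2^{-j(s-1/p)}\cdot\bigl(2^{j(s+1/2-1/p)}\|\beta_{j\cdot}\|_p\bigr),
\]
which rewrites each level term as a decaying weight times the quantity that appears inside the Besov norm \eqref{eq:besov:norm}.

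The second step is to sum over $j\geq 0$. I would apply H\"older's inequality to the sum over $j$ with conjugate exponents $q$ and $q'$: the $\ell^q$ factor is bounded by $\|f\|_{pqs}\leq 1$ (by the assumption and the Besov norm definition), while the $\ell^{q'}$ factor is the geometric series $\sum_{j\geq 0} 2^{-jq'(s-1/p)}$, which converges precisely because $s>1/p$ and sums to $(1-2^{-(s-1/p)q'})^{-1}$. Raising to the $1/q'$ power yields the stated form. The $\alpha_{0,k}\phi_{0,k}$ term is handled analogously (and is absorbed into the $j=0$ contribution or into an overall constant), again using the $\ell^\infty\leq\ell^p$ inequality and the $\|\alpha_{0\cdot}\|_p$ piece of \eqref{eq:besov:norm}.

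The only mildly delicate point is the first step: bounding $\|\sum_k \beta_{j,k}\psi_{j,k}\|_\infty$ uniformly in $x$. The compact-support property restricts the sum at any $x$ to at most $2(A+2)$ terms (\cref{clm:sparsity}), so this reduces to a finite pointwise estimate via H\"older in $k$ (absorbing the wavelet-dependent constants into the ``$\lesssim$''). Once that is in place, the rest is routine: one H\"older inequality in $j$ and a geometric series, with the series' convergence being exactly the condition $s>1/p$.
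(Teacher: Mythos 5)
Your plan is sound and is, in substance, the standard argument behind equation (15) of \cite{DonohoJKP96}, which the paper cites without reproving: split $f$ level by level via the triangle inequality, bound each level term pointwise using the compact support of the wavelets (so that at any $x$ only $\leq 2(A+2)$ indices $k$ contribute) together with H\"older in $k$ to get $\bigl\|\sum_k \beta_{j,k}\psi_{j,k}\bigr\|_\infty \lesssim 2^{j/2}\|\beta_{j\cdot}\|_p$, rewrite the weight as $2^{-j(s-1/p)}\cdot 2^{j(s+1/2-1/p)}$, and finish with H\"older in $j$ with exponents $(q,q')$ against the geometric series $\sum_{j\geq 0}2^{-jq'(s-1/p)}$, whose convergence is precisely the hypothesis $s>1/p$. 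The $\alpha_{0,k}$ piece is handled by the same $\ell^\infty\!\leq\!\ell^p$ step against the $\|\alpha_{0\cdot}\|_p$ term of the Besov norm.

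Two small points are worth flagging, both concerning typos in the paper rather than gaps in your argument. First, your derivation gives $\|f\|_\infty \leq C\bigl(1-2^{-(s-1/p)q'}\bigr)^{-1/q'}$, with a constant $C$ absorbing $\|\phi\|_\infty$, $\|\psi\|_\infty$ and $(2(A+2))^{1-1/p}$, and with exponent $-1/q'$. This is the correct form: the paper writes $+1/q'$, which would make the bound strictly less than $1$ whenever $s>1/p$, impossible since a density on $[0,1]$ has $\int_0^1 f = 1$ and hence $\|f\|_\infty\geq 1$. So the sign in the paper's statement (and the absence of the constant) is a typo; your version is what is actually used downstream, where only $\|f\|_\infty \lesssim 1$ matters. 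Second, the Besov-norm display in the paper omits the factor $j$ in the exponent $2^{s+1/2-1/p}$; you correctly use $2^{j(s+1/2-1/p)}$, which is the DJKP96 normalization and is what makes the H\"older step in $j$ close.
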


Now, let $X_1, \ldots, X_n$ be independent samples from distribution with density $f$. For $j,k \in \Z$, define
\begin{equation} \label{eqn:centralcoeff0}
	\bar{\alpha}_{j,k} \eqdef \frac{1}{n} \sum_{i=1}^{n}\phi_{j,k}(X_i), \quad
	\bar{\beta}_{j,k} \eqdef \frac{1}{n} \sum_{i=1}^{n}\psi_{j,k}(X_i).
\end{equation}
Observe that $\bar{\alpha}_{j,k}$ (resp., $\bar{\beta}_{j,k}$) is an unbiased estimate of $\alpha_{j,k}$ (resp., $\beta_{j,k}$). The following fact is from equation (16) in \cite{DonohoJKP96}.
\begin{fact} \label{fact:centralcoeff:rthnorm}
	Let $n \geq 2^j$. Then, for $r \geq 1$,
	\begin{align*}
		\bEE{\abs{\bar{\alpha}_{j,k} - {\alpha}_{j,k}}^r} \leq C n^{-r/2}, \quad 
		\bEE{\abs{\bar{\beta}_{j,k} - {\beta}_{j,k}}^r} \leq C n^{-r/2}
	\end{align*}
	where $C$ is a constant that depends on $p,q,s,r,\phi,\psi$.
\end{fact}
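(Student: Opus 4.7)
The plan is to reduce both bounds to a standard moment inequality for a centered sum of i.i.d.\ random variables. Set $Y_i \eqdef \phi_{j,k}(X_i) - \alpha_{j,k}$ for $i\in\ibrac{n}$; these are i.i.d., mean zero, with $\bar{\alpha}_{j,k} - \alpha_{j,k} = \frac{1}{n}\sum_{i=1}^n Y_i$, so it suffices to show $\bEE{\abs{\sum_i Y_i}^r} \leq C n^{r/2}$ and then divide by $n^r$. For $r \geq 2$ I would invoke Rosenthal's inequality,
\[
\bEE{\abs{\sum_{i=1}^n Y_i}^r} \leq C_r \max\set{(n\sigma^2)^{r/2},\; n\, \bEE{\abs{Y_1}^r}},
\]
where $\sigma^2 \eqdef \bEE{Y_1^2}$. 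The case $r\in[1,2)$ then follows from Jensen's inequality applied to the $r=2$ bound.

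The two quantities on the right-hand side are controlled using the regularity of $\phi$ together with \cref{fact:infinitynorm}. For the variance, a change of variables gives
\[
\sigma^2 \leq \bEE{\phi_{j,k}(X)^2} = \int f(x)\, 2^{j}\phi(2^{j}x-k)^2\, dx \leq \norm{f}_\infty \int \phi(u)^2\, du,
\]
a finite constant depending only on $\phi$ and on $\norm{f}_\infty$ (the latter being bounded by \cref{fact:infinitynorm} under $s>1/p$). For the $r$-th moment, I would use the crude almost-sure bound $\abs{Y_1} \leq 2\cdot 2^{j/2}\norm{\phi}_\infty$ (since $\norm{\phi_{j,k}}_\infty = 2^{j/2}\norm{\phi}_\infty$) and interpolate:
\[
\bEE{\abs{Y_1}^r} \leq \paren{2\cdot 2^{j/2}\norm{\phi}_\infty}^{r-2}\bEE{Y_1^2} \leq C\cdot 2^{j(r-2)/2},
\]
so that $n\, \bEE{\abs{Y_1}^r} \leq C\, n\cdot 2^{j(r-2)/2}$.

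The step where the hypothesis $n\geq 2^j$ enters, and which drives the whole argument, is showing that the variance term dominates in Rosenthal's bound: under this assumption $n\cdot 2^{j(r-2)/2} \leq n\cdot n^{(r-2)/2} = n^{r/2}$, so both entries of the maximum are $O(n^{r/2})$, yielding $\bEE{\abs{\sum_i Y_i}^r} \leq C n^{r/2}$ and hence $\bEE{\abs{\bar{\alpha}_{j,k} - \alpha_{j,k}}^r} \leq C n^{-r/2}$. The proof for $\bar{\beta}_{j,k}$ is verbatim with $\phi$ replaced by $\psi$, using the analogous regularity of $\psi$. I do not anticipate a substantive obstacle: the only things to double-check are that $C$ is uniform in $j,k$ (it is, since after rescaling the integrals $\int \phi^2$ and $\int\abs{\phi}^r$ are independent of $j,k$) and that the $r$-dependence flows entirely through the Rosenthal constant $C_r$, matching the claimed dependence of $C$ on $p,q,s,r,\phi,\psi$.
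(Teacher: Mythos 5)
Your proof is correct and, since the paper does not reprove this bound but cites it directly from equation~(16) of~\cite{DonohoJKP96}, you have in effect reconstructed the standard argument from that reference: Rosenthal's inequality for $r>2$, the uniform variance bound $\bEE{\phi_{j,k}(X)^2}\leq\norm{f}_\infty\norm{\phi}_2^2$ via change of variables together with \cref{fact:infinitynorm}, the a.s.\ bound $\abs{Y_1}\lesssim 2^{j/2}$ to interpolate the $r$-th moment, and Jensen's inequality to handle $r\in[1,2)$. The hypothesis $n\geq 2^j$ enters exactly where you place it, making the $n\,\bEE{\abs{Y_1}^r}$ term in Rosenthal's bound dominated by $(n\sigma^2)^{r/2}$, so the argument is complete.
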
 
We note another useful fact (obtained after setting $\beta = 0$ in equation (21) in \cite{DonohoJKP96}).
\begin{fact} \label{fact:useful:rthnorm}
	Let $g = \sum_{j=\low}^{\high} \sum_{k\in\Z} \widehat{g}_{j,k} \psi_{j,k}$, where $\widehat{g}_{j,k}$ is random. Then, for $r\geq 1$,
	\[
	\bEE{\norm{g}_r^r} \leq C (\high-\low)^{(r/2-1)_{+}} \sum_{j=\low}^{\high} 2^{j(r/2-1)} \sum_{k\in \Z} \bEE{ |\widehat{g}_{j,k}|^r }
	\]
	where $C$ is a constant that depends on $r$.
\end{fact}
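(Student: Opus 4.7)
The plan is to prove this by the Littlewood--Paley argument underpinning equation~(21) of \cite{DonohoJKP96}, specialized to the case $\beta = 0$. Write $g_j(x) \eqdef \sum_{k \in \Z} \widehat{g}_{j,k}\,\psi_{j,k}(x)$ for the piece of $g$ at scale $j$, so that $g = \sum_{j=\low}^{\high} g_j$. The proof splits naturally into two parts: (i) a scale-wise estimate for $\|g_j\|_r^r$, and (ii) a way of combining these estimates across scales that is tight enough to produce the exponent $(r/2-1)_+$ rather than the naive $r-1$ coming from a direct application of Jensen's inequality to $\|g\|_r \le \sum_j \|g_j\|_r$.

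For (i), I would exploit the compact support of $\psi$. Since $\supp{\psi} \subseteq [-A,A]$, at most $2A+1$ translates $\psi_{j,k}$ at scale $j$ can be simultaneously nonzero at any point $x$ (this is exactly the observation underlying \cref{clm:sparsity}). Applying H\"older's inequality to this $O(1)$-term sum pointwise in $x$, and then using the scaling identity $\int |\psi_{j,k}|^r\,dx = 2^{j(r/2-1)}\|\psi\|_r^r$, yields the scale-wise bound
\[
\|g_j\|_r^r \;\le\; C \cdot 2^{j(r/2-1)} \sum_{k\in\Z} |\widehat{g}_{j,k}|^r.
\]

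For (ii), I would invoke the standard wavelet square-function bound $\|g\|_r \lesssim \bigl\|\bigl(\sum_j |g_j|^2\bigr)^{1/2}\bigr\|_r$, valid for $1 < r < \infty$ under the $N$-regularity assumption on $\psi$ (see, e.g., \cite{HardleKPT12}). When $r \ge 2$, so that $r/2 \ge 1$, Minkowski's inequality in $L^{r/2}$ applied to $\sum_j |g_j|^2$ gives $\bigl\|\sum_j |g_j|^2\bigr\|_{r/2} \le \sum_j \|g_j\|_r^2$, and one further application of Jensen on the $(\high-\low+1)$-term sum (using convexity of $u\mapsto u^{r/2}$) produces $\|g\|_r^r \le C(\high-\low+1)^{r/2-1} \sum_j \|g_j\|_r^r$. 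When $1 \le r \le 2$, so that $r/2 \le 1$, the subadditivity $(\sum_j a_j)^{r/2} \le \sum_j a_j^{r/2}$ applied inside the integral defining $\|\cdot\|_r^r$ gives the same estimate with factor $(\high-\low+1)^0 = 1$; the edge case $r = 1$ can in fact be handled without Littlewood--Paley via the bare triangle inequality $\|g\|_1 \le \sum_j \|g_j\|_1$. In all cases the exponent on $(\high-\low+1)$ is exactly $(r/2-1)_+$. Substituting the scale-wise bound from (i) and taking expectations completes the proof.

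The main obstacle is really just the clean invocation of the wavelet square-function upper bound for the system at hand; the scale-wise bound is elementary, and the cross-scale convexity manipulations are routine. Because in our applications the $\widehat g_{j,k}$ are not independent across $(j,k)$ (they are functionals of the same underlying samples and can be highly correlated), Rosenthal-type moment inequalities are unavailable, which is precisely why the deterministic pointwise square-function argument is the right tool here --- expectations are then absorbed trivially at the very end by linearity.
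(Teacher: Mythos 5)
Your proof is correct, but it is worth noting that the paper itself does not prove this fact: it is obtained by specializing equation~(21) of Donoho, Johnstone, Kerkyacharian, and Picard~\cite{DonohoJKP96} (setting $\beta = 0$ there), treating the underlying Littlewood--Paley machinery as a citation. What you have done is reconstruct that machinery from scratch, and every step checks out. The scale-wise bound $\|g_j\|_r^r \leq C\,2^{j(r/2-1)}\sum_{k}|\widehat g_{j,k}|^r$ follows exactly as you say from the bounded overlap of the translates $\{\psi_{j,k}\}_k$ (at most $2A+1$ nonzero at any $x$, which is the same observation as \cref{clm:sparsity}), the power-mean inequality on that $O(1)$-term sum, and the rescaling identity $\int|\psi_{j,k}|^r\,dx = 2^{j(r/2-1)}\|\psi\|_r^r$. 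The cross-scale step correctly invokes $\|g\|_r \lesssim \big\|(\sum_j|g_j|^2)^{1/2}\big\|_r$ for $1<r<\infty$, Minkowski in $L^{r/2}$ plus Jensen for $r\geq 2$ (giving $(\high-\low+1)^{r/2-1}$, which differs from the stated $(\high-\low)^{r/2-1}$ only by a constant absorbed in $C$), raw subadditivity of $t\mapsto t^{r/2}$ for $1<r<2$ (exponent $0$), and the triangle inequality at the endpoint $r=1$. Your closing observation is also the right one: the bound is pointwise and deterministic, so it survives taking expectations by linearity regardless of the correlations among the $\widehat g_{j,k}$, which is exactly what the paper needs. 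One small remark on what each approach buys: the paper's citation is terse but leans on the reader having DJKP at hand; your argument is self-contained. It is also sharper than strictly necessary for this paper --- everywhere Fact~\ref{fact:useful:rthnorm} is applied, the factor $(\high-\low)^{(r/2-1)_+}$ is immediately loosened to $\high^{r/2}$ and then swallowed by the $\log^\alpha\ns$ slack of \cref{theo:ub:adaptive}, so a cruder combination via plain triangle inequality plus power-mean (exponent $r-1$, no Littlewood--Paley needed) would have sufficed for the downstream analysis. Proving the statement as given, as you did, is still the correct choice.
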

\subsection{Useful probabilistic inequalities}

\begin{theorem}[Rosenthal's inequality \cite{Rosenthal70}] \label{theo:rosenthal}
	Let $X_1,\ldots,X_n$ be independent random variables such that $\bEE{X_i} = 0$. and $\bEE{\abs{X_i}^r} < \infty$ for every $i$. 
	\begin{enumerate}
		\item Suppose $\bEE{X_i^2} < \infty$ for every $i$. Then, for $1 \leq r \leq 2$,
		\[
		\bEE{\abs{\sum_{i=1}^{n}X_i}^r} \leq  \paren{\sum_{i=1}^{n}\bEE{X_i^2}}^{r/2}.
		\]	
		(This just follows from concavity of $f(x) = x^{r/2}$ for $r \leq 2$.)
		\item Suppose $\bEE{\abs{X_i}^r} < \infty$ for every $i$. Then, for $r > 2$, there exists a constant $K_r$ depending only on $r$ such that
		\[
		\bEE{\abs{\sum_{i=1}^{n}X_i}^r} \leq K_r \left\{ \sum_{i=1}^{n}\bEE{\abs{X_i}^r} + \paren{\sum_{i=1}^{n}\bEE{X_i^2}}^{r/2} \right\}.
		\]
	\end{enumerate}	
\end{theorem}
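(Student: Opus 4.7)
The plan is to dispatch the two parts separately. Part 1 ($1 \leq r \leq 2$) is immediate, as the theorem statement itself signals: by Lyapunov's inequality (equivalently, Jensen applied to the concave function $x \mapsto x^{r/2}$), $\mathbb{E}[|\sum_i X_i|^r] \leq (\mathbb{E}[|\sum_i X_i|^2])^{r/2}$, and the independence together with the mean-zero assumption yield $\mathbb{E}[(\sum_i X_i)^2] = \sum_i \mathbb{E}[X_i^2]$. So all the real work is in Part 2.

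For Part 2, I would follow the classical route of symmetrization plus Khintchine. Introduce an independent copy $(X_i')$ of $(X_i)$ and set $\tilde X_i := X_i - X_i'$. Jensen's inequality in the conditional expectation, together with $\mathbb{E}[\sum_i X_i'] = 0$, gives $\mathbb{E}|\sum_i X_i|^r \leq \mathbb{E}|\sum_i \tilde X_i|^r$; and since the $\tilde X_i$ are independent and symmetric, $\sum_i \tilde X_i$ has the same law as $\sum_i \varepsilon_i \tilde X_i$ for i.i.d.\ Rademacher $\varepsilon_i$ independent of the rest. Conditioning on $(X_i, X_i')_i$ and applying Khintchine's inequality to the Rademacher sum gives
\[
\mathbb{E}\Big|\sum_i \varepsilon_i \tilde X_i\Big|^r \leq C_r\, \mathbb{E}\Big[\Big(\sum_i \tilde X_i^2\Big)^{r/2}\Big],
\]
and then the elementary bound $\tilde X_i^2 \leq 2(X_i^2 + (X_i')^2)$ collapses everything to $\mathbb{E}|\sum_i X_i|^r \leq C'_r\, \mathbb{E}[(\sum_i X_i^2)^{r/2}]$.

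The main obstacle is the final step: bounding $\mathbb{E}(\sum_i X_i^2)^{r/2}$ by a constant times $\sum_i \mathbb{E}|X_i|^r + (\sum_i \mathbb{E} X_i^2)^{r/2}$. My plan is to induct on $r$ (more concretely, on $\lceil \log_2 r \rceil$): write $\sum_i X_i^2 = \sum_i \mathbb{E}X_i^2 + \sum_i(X_i^2 - \mathbb{E}X_i^2)$, use $(a+b)^{r/2} \leq 2^{r/2-1}(a^{r/2} + |b|^{r/2})$ to isolate the desired $(\sum_i \mathbb{E}X_i^2)^{r/2}$ term, and then apply either Part 1 (once $r/2 \in [1,2]$) or the inductive hypothesis to the centered sum of independent variables $X_i^2 - \mathbb{E}X_i^2$. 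Their $(r/2)$-th absolute moments are controlled by $\mathbb{E}|X_i|^r$ via the triangle inequality and Jensen, while any higher-order variances that arise can be dominated by $\mathbb{E}|X_i|^r$ and $\mathbb{E}X_i^2$ through interpolation of moments. The delicate accounting lies in closing this induction cleanly for non-dyadic $r$ and tracking the blow-up of the constant $K_r$ as $r$ grows.
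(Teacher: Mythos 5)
The paper does not actually prove this result: Rosenthal's inequality is quoted as a classical theorem from \cite{Rosenthal70} (with only the parenthetical remark, for Part~1, that it follows from concavity of $x\mapsto x^{r/2}$). So there is no ``paper's proof'' to compare against, only your blind reconstruction.

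Your Part~1 is correct and is exactly the remark the paper makes. For Part~2, the high-level route (symmetrize, condition and apply Khintchine, then reduce to bounding $\mathbb{E}\bigl[(\sum_i X_i^2)^{r/2}\bigr]$) is a standard and legitimate way to prove Rosenthal's inequality, and the induction step for $r\geq 4$ does close: there you may apply the inductive hypothesis to $Y_i = X_i^2-\mathbb{E}X_i^2$, bound $\mathbb{E}|Y_i|^{r/2}\lesssim \mathbb{E}|X_i|^r$, and absorb $\sum_i\mathbb{E}Y_i^2 \leq \sum_i\mathbb{E}X_i^4$ by moment interpolation $\mathbb{E}X_i^4 \leq (\mathbb{E}X_i^2)^{(r-4)/(r-2)}(\mathbb{E}|X_i|^r)^{2/(r-2)}$ followed by H\"older and weighted AM--GM.

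The gap is in the \emph{base case} $r\in(2,4]$, and it is genuine. You propose to apply Part~1 at exponent $r/2\in(1,2]$ to the centered variables $Y_i=X_i^2-\mathbb{E}X_i^2$, which yields
$\mathbb{E}\bigl|\sum_i Y_i\bigr|^{r/2}\leq \bigl(\sum_i\mathbb{E}Y_i^2\bigr)^{r/4}$, a bound controlled by $\sum_i\mathbb{E}X_i^4$. But for $2<r<4$ the fourth moment $\mathbb{E}X_i^4$ is \emph{not} dominated by any function of $\mathbb{E}X_i^2$ and $\mathbb{E}|X_i|^r$: moment interpolation only bounds \emph{intermediate} moments ($2\leq s\leq r$), and here $4>r$, so $\mathbb{E}X_i^4$ can be made arbitrarily large with $\mathbb{E}X_i^2$ and $\mathbb{E}|X_i|^r$ held fixed. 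Concretely, take $X_i$ iid equal to $\pm a$ with probability $\varepsilon/2$ each and $0$ otherwise, with $n\varepsilon=\delta$ small: then the Part~1 bound gives $\mathbb{E}\bigl(\sum X_i^2\bigr)^{r/2}\lesssim a^r\delta^{r/4}$, while the Rosenthal right-hand side is of order $a^r\delta$, so as $\delta\to 0$ the proposed intermediate bound is genuinely lossier than the target by a diverging factor. To close the range $r\in(2,4]$ one needs a different argument for the nonnegative Rosenthal inequality $\mathbb{E}(\sum_i Z_i)^q\lesssim \sum_i\mathbb{E}Z_i^q + (\sum_i\mathbb{E}Z_i)^q$ with $Z_i=X_i^2$ and $q=r/2\in(1,2]$ --- for instance the telescoping $S_n^q=\sum_k(S_k^q-S_{k-1}^q)$ combined with subadditivity of $x\mapsto x^{q-1}$ and a Young/Hölder absorption step --- rather than an appeal to Part~1 plus interpolation.
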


\begin{theorem}[Bernstein's inequality] \label{theo:bernstein}
	Let $X_1,\dots, X_n$ be independent random variables such that $\abs{X_i}\leq b$ almost surely, and $\bEE{X_i^2} \leq v_i$ for every $i$. Let $X \eqdef \sum_{i=1}^n X_i$ and $V \eqdef \sum_{i=1}^n v_i$. Then, for every $u\geq 0$,
	\[
	\bPr{ \abs{X-\bEE{X}} \geq u } \leq {\rm exp}\paren{-\frac{u^2}{2(V+\frac{bu}{3})}}
	\]
\end{theorem}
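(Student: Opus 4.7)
The plan is to use the classical Chernoff--Cram\'er (exponential moment) method. By Markov's inequality applied to $e^{\lambda(X-\bEE{X})}$ for $\lambda > 0$, we have
\[
\bPr{X - \bEE{X} \geq u} \leq e^{-\lambda u}\, \bEE{e^{\lambda(X - \bEE{X})}}
= e^{-\lambda u}\prod_{i=1}^n \bEE{e^{\lambda Y_i}},
\]
where $Y_i \eqdef X_i - \bEE{X_i}$, with $\bEE{Y_i}=0$, $|Y_i| \leq b$ (up to adjusting constants via centering), and $\bEE{Y_i^2}\leq v_i$. The task reduces to bounding each MGF and then optimizing the free parameter $\lambda$.

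The key technical step is the Bennett/Bernstein MGF bound: for every $0 \leq \lambda < 3/b$,
\[
\bEE{e^{\lambda Y_i}} \leq \exp\!\paren{\frac{\lambda^2 v_i/2}{1 - \lambda b/3}}.
\]
I would prove this by Taylor expanding $\bEE{e^{\lambda Y_i}} = 1 + \sum_{k\geq 2} \lambda^k \bEE{Y_i^k}/k!$, noting that the $k=1$ term vanishes by centering, and bounding $|\bEE{Y_i^k}| \leq b^{k-2}\,\bEE{Y_i^2} \leq b^{k-2} v_i$ for $k \geq 2$ using $|Y_i|\leq b$. Combined with the elementary inequality $k! \geq 2\cdot 3^{k-2}$, the series becomes a geometric sum in $\lambda b/3$, yielding $\bEE{e^{\lambda Y_i}} \leq 1 + \frac{\lambda^2 v_i/2}{1 - \lambda b/3}$, and then $1+x\leq e^x$ gives the displayed bound.

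Multiplying over $i=1,\dots,n$ gives $\bEE{e^{\lambda(X-\bEE{X})}} \leq \exp\!\paren{\lambda^2 V/(2(1-\lambda b/3))}$, so
\[
\bPr{X - \bEE{X} \geq u} \leq \exp\!\paren{-\lambda u + \frac{\lambda^2 V/2}{1 - \lambda b/3}}
\]
for every admissible $\lambda$. The final step is to optimize the exponent: the choice $\lambda = u/(V + bu/3)$ satisfies $\lambda b/3 < 1$ and plugs in to give exponent $-u^2/(2(V+bu/3))$. By symmetry the same bound holds for $-X$, hence for $|X-\bEE{X}|$ (after either a union bound absorbing the factor of $2$ into constants, or by noting the one-sided version suffices for the applications in this paper).

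The main obstacle is purely in the numerology: the moment-to-variance bound and the specific constant $3$ in the denominator come from the trick of comparing $k!$ with a geometric factor, and one must be careful with the admissible range of $\lambda$ so that the optimization is valid. All ingredients are standard, but the constants must line up to produce exactly the form $u^2/(2(V+bu/3))$ asserted in the statement.
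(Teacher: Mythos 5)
The paper does not prove \cref{theo:bernstein}: it is quoted in the appendix as a standard fact, so there is no paper proof to compare against, and your Chernoff--Cram\'er route is indeed the textbook argument. There is, however, one genuine (if small) flaw in the centering step. From $|X_i|\leq b$ you only get $|Y_i|=|X_i-\bEE{X_i}|\leq 2b$, not $b$; feeding $2b$ into $|\bEE{Y_i^k}|\leq (2b)^{k-2}\bEE{Y_i^2}$ would produce $V+2bu/3$ in the final denominator instead of $V+bu/3$. So this is not something you can wave away ``up to adjusting constants via centering''~--~it changes the constant in the stated bound.

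The standard fix is to bound the \emph{uncentered} moment generating function and center multiplicatively at the end. Using $|X_i|\leq b$ one has $|\bEE{X_i^k}|\leq b^{k-2}\bEE{X_i^2}\leq b^{k-2}v_i$ for all $k\geq 2$ (here it is the raw $X_i$, not $Y_i$, that is bounded, so the constant really is $b$), hence
\[
\bEE{e^{\lambda X_i}} \leq 1 + \lambda\bEE{X_i} + \frac{v_i}{b^2}\bigl(e^{\lambda b}-1-\lambda b\bigr),
\qquad
\bEE{e^{\lambda Y_i}} = e^{-\lambda\bEE{X_i}}\,\bEE{e^{\lambda X_i}}
\leq \exp\!\Bigl(\frac{v_i}{b^2}\bigl(e^{\lambda b}-1-\lambda b\bigr)\Bigr),
\]
the last step using $1+x\leq e^x$. (Equivalently, invoke the monotonicity of $u\mapsto (e^u-1-u)/u^2$ together with the one-sided bound $X_i\leq b$.) From there your $k!\geq 2\cdot 3^{k-2}$ step gives $e^u-1-u\leq \tfrac{u^2/2}{1-u/3}$ for $0\leq u<3$, the product over $i$ yields $\exp\!\bigl(\lambda^2V/(2(1-\lambda b/3))\bigr)$, and the optimization $\lambda=u/(V+bu/3)$ is correct. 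Note that only $X_i\leq b$ enters; applying the same to $-X_i$ for the two-sided tail introduces a factor $2$ in front that the paper's statement silently drops, which, as you already observe, is immaterial for the applications in this paper.
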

	\section{Analysis of single-level estimator} \label{supp:singlevel}
		Our goal is to upperbound the worst-case $\cL^r$ loss $\bEE{\norm{\widehat{f}-f}_r^r}$, where $\widehat{f}$ is the estimate output by the referee. Arguments and results in Sections~\ref{sec:coupling} and~\ref{sec:keylemmas} will be used in the analysis of the multi-level estimator as well.

\subsection{Coupling of simulated and ideal estimators} \label{sec:coupling}
Denote by $\bp$ the probability distribution corresponding to density $f$. Recall that $\bp_{\cZ^{(\high)}}$ is the distribution after quantization of samples from $\bp$. Suppose the referee has $m$ samples from $\bp_{\cZ^{(\high)}}$ using which it outputs the estimate $\widehat{f}$. 
To compute $\bEE{\norm{\widehat{f}-f}_r^r}$, consider the following statistically equivalent situation:
\begin{itemize}
	\item There are $m$ i.i.d. samples $X_1,\ldots,X_m \sim \bp$.
	
	\item For each $i \in \ibrac{m}$, let
	\begin{equation} \label{eqn:slcoeff1}
		\widehat{\phi}_{\high,k}(X_i) = 
		\begin{cases}
			0 & \text{if } k \notin \cA_{B_i}^{(\high)},  \\
			2^{\high/2} Q(V_i)(k) & \text{if } k \in \cA_{B_i}^{(\high)},
		\end{cases}
	\end{equation}
	where $B_i$ is the bin $X_i$ lies in, $Q(V_i)$ is obtained by quantizing $\set{2^{-\high/2}\phi_{\high,k}(X_i)}_{k \in \cA_{B_i}^{(\high)}}$ using Algorithm 1, and $Q(V_i)(k)$ is the entry in $Q(V_i)$ corresponding to $k \in \cA_{B_i}^{(\high)}$. 
	In other words, $\set{\widehat{\phi}_{\high,k}(X_i)}_{k \in \Z}$ is the quantized version of $\set{\phi_{\high,k}(X_i)}_{k \in \Z}$.
	
	\item Define $\widehat{f}$ as
	\begin{equation} \label{eq:slest}
	\widehat{f} = \sum_{k \in \Z} \widehat{\alpha}_{\high,k} \phi_{\high,k},
	\quad \text{where } \ \widehat{\alpha}_{\high,k} = \frac{1}{m} \sum_{i=1}^{m} \widehat{\phi}_{\high,k}(X_i), \ k \in \Z.
	\end{equation}
\end{itemize}
Then, computing the $\cL^r$ loss for the single-level estimator is equivalent to computing the $\cL^r$ loss for $\widehat{f}$ defined in \eqref{eq:slest}. From here on, when we refer to $\widehat{f}$, we mean $\widehat{f}$ defined in \eqref{eq:slest}. Now,
\begin{equation} \label{eq:lossbreakup}
\bEE{\norm{\widehat{f}-f}_r^r} \leq 2^{r-1}\paren{ \bEE{\norm{\widehat{f}-\bar{f}}_r^r} + \bEE{\norm{\bar{f}-f}_r^r }}
\end{equation}
where $\bar{f}$ is defined as
\begin{equation} \label{eq:slclassic}
\bar{f} = \sum_{k \in \Z} \bar{\alpha}_{\high,k} \phi_{\high,k} \quad \text{where} \quad \bar{\alpha}_{\high,k} = \frac{1}{m} \sum_{i=1}^{m} \phi_{\high,k}(X_i), \ k \in \Z.
\end{equation}
Note that this is just the classical density estimate obtained using $m$ samples $X_1,\ldots,X_m$. The idea behind introducing this coupling is to facilitate analysis by bringing in the classical density estimate \eqref{eq:slclassic} and breaking up the $\cL^r$ loss as in \eqref{eq:lossbreakup}.

\subsection{Key lemmas} \label{sec:keylemmas}
Since these lemmas will be used in the analysis of the multi-level estimator as well, we discuss them in more generality than would be required for only analyzing the single-level estimator. For $j,k \in \Z$, define
\begin{align}
	&\widehat{\alpha}_{j,k} \eqdef \frac{1}{m} \sum_{i=1}^{m} \widehat{\phi}_{j,k}(X_i), &&\widehat{\beta}_{j,k} \eqdef \frac{1}{m} \sum_{i=1}^{m} \widehat{\psi}_{j,k}(X_i), \label{eqn:quantizedcoeff} \\
	&\bar{\alpha}_{j,k} \eqdef \frac{1}{m} \sum_{i=1}^{m}\phi_{j,k}(X_i),
	&&\bar{\beta}_{j,k} \eqdef \frac{1}{m} \sum_{i=1}^{m}\psi_{j,k}(X_i), \label{eqn:centralcoeff}	
\end{align}
where $\set{\widehat{\phi}_{j,k}(X_i)}_{k \in \Z}$ and $\set{\widehat{\psi}_{j,k}(X_i)}_{k \in \Z}$ are quantized versions of $\set{\phi_{j,k}(X_i)}_{k \in \Z}$ and $\set{\psi_{j,k}(X_i)}_{k \in \Z}$, respectively (as in \eqref{eqn:slcoeff1}, with $\high$ replaced by $j$; $\phi$ replaced by $\psi$, and $\cA_{B_i}^{(j)}$ replaced by $\cB_{B_i}^{(j)}$  in the case of $\widehat{\beta}_{j,k}$). 
The following claim bounds the error between quantized and unquantized (classical) estimates of wavelet coefficients.
\begin{claim}[Error between quantized and unquantized estimates]
	\label{clm:coeff:central}
	For $r \geq 1$, we have
	\begin{align*}
		\bEE{\abs{\widehat{\alpha}_{j,k} - \bar{\alpha}_{j,k}}^r} \leq C
		\begin{cases}
			\frac{1}{m^{r/2}}, &\text{if } r \in [1,2],
                        \\
			\frac{2^{j(r/2-1)}}{m^{r-1}} + \frac{1}{m^{r/2}}, &\text{if } r >2,
		\end{cases}
	\end{align*}
	for a constant $C$. The same bound holds for $\bEE{\abs{\widehat{\beta}_{j,k} - \bar{\beta}_{j,k}}^r}$ as well.
\end{claim}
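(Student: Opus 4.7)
The plan is to write the difference as a normalized sum of i.i.d.\ mean-zero random variables and apply Rosenthal's inequality (\cref{theo:rosenthal}), with moment bounds that exploit wavelet-induced sparsity.

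First I set $D_i \eqdef \widehat{\phi}_{j,k}(X_i) - \phi_{j,k}(X_i)$, so that $\widehat{\alpha}_{j,k} - \bar{\alpha}_{j,k} = \frac{1}{m}\sum_{i=1}^m D_i$. The $D_i$ are i.i.d.\ since the $X_i$ are i.i.d.\ and the quantization randomness is independent across $i$. They are mean-zero because \cref{alg:quantizer} is unbiased (\cref{clm:quantizer}): conditionally on $X_i$, $\bEE{\widehat{\phi}_{j,k}(X_i)\mid X_i} = \phi_{j,k}(X_i)$.

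The main step is to bound $\bEE{|D_i|^r}$ via a sparsity--magnitude trade-off. On the sparsity side, $D_i \neq 0$ only when $X_i$ lies in the set $S_{j,k}$ formed by $\supp(\phi_{j,k})$ together with those bins $E_t^{(j)}$ for which $k \in \cA_t^{(j)}$; by \cref{clm:sparsity} there are only $O(1)$ such bins, each of length $2^{-j}$, so $S_{j,k}$ has Lebesgue measure $O(2^{-j})$. Combined with $\|f\|_\infty = O(1)$ from \cref{fact:infinitynorm} (using $s > 1/p$), this gives $\Pr(D_i \neq 0) \leq C\cdot 2^{-j}$. On the magnitude side, when $D_i$ is nonzero the construction of \cref{alg:quantizer} forces entries of $Q(V_i)$ to lie in $\{0,\pm Bd\}$ with $B = \|\phi\|_\infty$ and $d \leq 2(A+2)$, so $|\widehat{\phi}_{j,k}(X_i)| \leq 2^{j/2} Bd$, and $|\phi_{j,k}(X_i)| \leq 2^{j/2}\|\phi\|_\infty$ yields the deterministic bound $|D_i| \leq C' \cdot 2^{j/2}$. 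Combining,
\[
\bEE{|D_i|^r} \;\leq\; (C')^r\, 2^{jr/2} \cdot \Pr(D_i \neq 0) \;\leq\; C'' \cdot 2^{j(r/2 - 1)}
\]
for every $r \geq 1$; in particular $\bEE{D_i^2} \leq C''$.

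Finally I invoke \cref{theo:rosenthal}. For $r \in [1,2]$, Part~1 gives $\bEE{|\sum_i D_i|^r} \leq (m\,\bEE{D_i^2})^{r/2} \lesssim m^{r/2}$; dividing by $m^r$ yields the first case of the claim. For $r > 2$, Part~2 gives
\[
\bEE{\Big|\sum_{i=1}^m D_i\Big|^r} \;\leq\; K_r\!\left[ m \cdot C'' 2^{j(r/2-1)} + (C'' m)^{r/2}\right],
\]
and dividing by $m^r$ gives the claimed two-term bound. The exact same argument, with $\psi$ and $\cB_t^{(j)}$ replacing $\phi$ and $\cA_t^{(j)}$, handles $\widehat{\beta}_{j,k} - \bar{\beta}_{j,k}$. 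The main obstacle is the variance bound: a naive estimate gives only $\bEE{D_i^2} \lesssim 2^j$, and the correct $O(1)$ bound depends crucially on balancing the $2^{j/2}$ amplitude of the quantization error against the $O(2^{-j})$ probability that index $k$ is ``active'' at $X_i$---without this cancellation the resulting rate would lose a factor of $2^j$ and the downstream bounds would be too weak.
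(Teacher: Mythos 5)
Your proof is correct and follows essentially the same route as the paper: write the difference as a normalized sum of i.i.d.\ mean-zero terms, observe that each term has amplitude $O(2^{j/2})$ but is nonzero only with probability $O(2^{-j})$ (so that $\bEE{D_i^2} = O(1)$ rather than the naive $O(2^j)$), and then feed these moment bounds into Rosenthal's inequality, handling $r\in[1,2]$ and $r>2$ separately. The only minor quibble is the citation of \cref{clm:sparsity} for the number of bins $t$ with $k\in\cA_t^{(j)}$: that claim bounds the dual quantity $|\cA_t^{(j)}|$ (number of $k$ per bin), while the bound you actually need follows directly from $\supp(\phi_{j,k})$ having length $O(2^{-j})$ and hence meeting only $O(1)$ bins of width $2^{-j}$ --- which is the same observation underlying the claim, and the paper itself appeals to this support-locality directly via $\bPr{k \in \cA_{B_i}^{(j)}} \lesssim 2^{-j}\norminf{f}$ and \cref{fact:infinitynorm}.
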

\begin{proof}
	For a given $j,k$,
	\[
	\bEE{\abs{\widehat{\alpha}_{j,k} - \bar{\alpha}_{j,k}}^r}
	= \bEE{ \abs{ \frac{1}{m} \sum_{i=1}^{m} \Paren{\widehat{\phi}_{j,k}(X_i) - \phi_{j,k}(X_i)} \indic{\cA_{B_i}^{(j)} \ni k} }^r } 
	= \frac{1}{m^r} \bEE{ \abs{\sum_{i=1}^{m} Y_{ik}}^r }
	\]
	where
	\[
	Y_{ik} \eqdef \Paren{\widehat{\phi}_{j,k}(X_i) - \phi_{j,k}(X_i)} \indic{ k \in \cA_{B_i}^{(j)} }.
	\]
	Note that, since the quantization is unbiased, we have $\bEE{ Y_{ik} } = 0$. Moreover, $\abs{Y_{ik}} \lesssim 2^{j/2}$ almost surely. 
	We first consider the case $r > 2$. Then, by Rosenthal's inequality (Theorem \ref{theo:rosenthal}), 
	\begin{equation} \label{eqn:rosen}
		\bEE{ \abs{\sum_{i=1}^{m} Y_{ik}}^r }
		\lesssim (2^{j/2})^{(r-2)} \sum_{i=1}^{m} \bEE{{Y_{ik}}^2}  + \Paren{\sum_{i=1}^{m} \bEE{ Y_{ik}^2}}^{\frac{r}{2}} 
		= 2^{j(\frac{r}{2}-1)}  m \ \bEE{ {Y_{1k}}^2 } + m^{\frac{r}{2}} \bEE{Y_{1k}^2}^{\frac{r}{2}}
	\end{equation}
	Moreover,
	\[
	\bEE{Y_{ik}^2}
	= \bEE{ \Paren{\widehat{\phi}_{j,k}(X_i) - \phi_{j,k}(X_i)}^2 \indic{ k \in \cA_{B_i}^{(j)} } } \\
	\lesssim 2^j \bPr{ k \in \cA_{B_i}^{(j)} }.
	\]
	Now, note that
	\[
	\bPr{ k \in \cA_{B_i}^{(j)} } = \bPr{ X_i \in {\rm supp}(\phi_{j,k})} \leq \frac{2A}{2^j} \norm{f}_\infty \lesssim \frac{1}{2^j} \ \paren{\text{using Fact \ref{fact:infinitynorm}}}
	\]
	which gives
	\[
	\bEE{Y_{ik}^2} \lesssim 1.
	\]
	Substituting this in \eqref{eqn:rosen}, we get the desired result when $r > 2$. For $r \in [1,2]$, using part (1) of Theorem \ref{theo:rosenthal}, only the second term in \eqref{eqn:rosen} remains. This gives the result for $r \in [1,2]$. The proof for $\bEE{\abs{\widehat{\beta}_{j,k} - \bar{\beta}_{j,k}}^r}$ is analogous.
\end{proof}

The claim above, combined with Fact \ref{fact:centralcoeff:rthnorm}, lets us bound the error between quantized estimates and true coefficients as follows.
\begin{claim}[Error between quantized estimates and true coefficients]
	\label{clm:coeff}
	Let $m \geq 2^j$. Then, for $r \geq 1$, we have
	\[
	\bEE{\abs{\widehat{\alpha}_{j,k} - {\alpha}_{j,k}}^r} \leq \frac{C}{m^{r/2}}
	\,,\qquad
	\bEE{\abs{\widehat{\beta}_{j,k} - {\beta}_{j,k}}^r} \leq \frac{C}{m^{r/2}}
	\]
	for a constant $C$.
\end{claim}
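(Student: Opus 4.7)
The plan is to reduce this claim to Claim \ref{clm:coeff:central} combined with Fact \ref{fact:centralcoeff:rthnorm} by a standard triangle-inequality decomposition, and then verify that the $m \geq 2^j$ hypothesis collapses the two-term bound of Claim \ref{clm:coeff:central} into the single term $m^{-r/2}$.

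First, I would write the decomposition
\[
\abs{\widehat{\alpha}_{j,k} - \alpha_{j,k}} \leq \abs{\widehat{\alpha}_{j,k} - \bar{\alpha}_{j,k}} + \abs{\bar{\alpha}_{j,k} - \alpha_{j,k}},
\]
where $\bar{\alpha}_{j,k}$ is the unquantized empirical coefficient defined in~\eqref{eqn:centralcoeff}. Applying the elementary inequality $(a+b)^r \leq 2^{r-1}(a^r+b^r)$ and taking expectations yields
\[
\bEE{\abs{\widehat{\alpha}_{j,k} - \alpha_{j,k}}^r} \leq 2^{r-1}\Paren{\bEE{\abs{\widehat{\alpha}_{j,k} - \bar{\alpha}_{j,k}}^r} + \bEE{\abs{\bar{\alpha}_{j,k} - \alpha_{j,k}}^r}}.
\]

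Next, I would bound each term separately. For the second term, Fact~\ref{fact:centralcoeff:rthnorm} (which requires exactly $m \geq 2^j$) gives $\bEE{|\bar{\alpha}_{j,k} - \alpha_{j,k}|^r} \lesssim m^{-r/2}$. For the first term, Claim~\ref{clm:coeff:central} already gives $m^{-r/2}$ when $r \in [1,2]$, so the bound holds immediately in that regime. For $r > 2$, Claim~\ref{clm:coeff:central} yields $2^{j(r/2-1)}/m^{r-1} + m^{-r/2}$, and here the hypothesis $m \geq 2^j$ becomes essential: it gives $2^{j(r/2-1)} \leq m^{r/2-1}$, so that
\[
\frac{2^{j(r/2-1)}}{m^{r-1}} \leq \frac{m^{r/2-1}}{m^{r-1}} = \frac{1}{m^{r/2}},
\]
which collapses the bound to $O(m^{-r/2})$ as desired.

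Combining these two bounds gives $\bEE{|\widehat{\alpha}_{j,k} - \alpha_{j,k}|^r} \leq C m^{-r/2}$, absorbing the factor $2^{r-1}$ and the constants from Claim~\ref{clm:coeff:central} and Fact~\ref{fact:centralcoeff:rthnorm} into the constant $C$. The argument for $\bEE{|\widehat{\beta}_{j,k} - \beta_{j,k}|^r}$ is verbatim identical, with $\psi$ replacing $\phi$ and $\cB_{B_i}^{(j)}$ replacing $\cA_{B_i}^{(j)}$ throughout. There is no significant obstacle here: the proof is essentially a bookkeeping exercise, and the only substantive step is the verification that $m \geq 2^j$ is precisely the right threshold to make the quantization-variance term $2^{j(r/2-1)}/m^{r-1}$ no worse than the sampling-variance term $m^{-r/2}$.
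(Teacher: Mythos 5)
Your proposal is correct and matches the paper's proof essentially step for step: the same $2^{r-1}(a^r+b^r)$ decomposition into the quantization error (bounded by Claim~\ref{clm:coeff:central}) and the sampling error (bounded by Fact~\ref{fact:centralcoeff:rthnorm}), and the same observation that $m\geq 2^j$ collapses the $r>2$ two-term bound to $O(m^{-r/2})$.
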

\begin{proof}
	Note that
	\[
	\bEE{\abs{\widehat{\alpha}_{j,k} - {\alpha}_{j,k}}^r}
	\leq 2^{r-1}\Paren{ \bEE{\abs{\widehat{\alpha}_{j,k} - \bar{\alpha}_{j,k}}^r} + \bEE{\abs{\bar{\alpha}_{j,k} - {\alpha}_{j,k}}^r} }\,.
	\]
	The first term can be handled with~\cref{clm:coeff:central}. The second term can be bound using Fact \ref{fact:centralcoeff:rthnorm}. Overall, for $r >2$, we get
	\[
	\bEE{\abs{\widehat{\alpha}_{j,k} - {\alpha}_{j,k}}^r}
	\lesssim \frac{2^{j(r/2-1)}}{m^{r-1}} + \frac{1}{m^{r/2}}
	\leq \frac{2}{m^{r/2}}, 
	\]
since $m \geq 2^j$.
	We get the same bound for $r \in [1,2]$. The result follows.
	The bound on $\bEE{\abs{\widehat{\beta}_{j,k} - {\beta}_{j,k}}^r}$ is obtained in the same way.
\end{proof}
Since, for any $j$, there are $O(2^j)$ translations $k$ for which the coefficients are non-zero (Fact \ref{fact:coeffnonzero}), Claim \ref{clm:coeff} readily implies the corollary below.
\begin{corollary}
	\label{cor:coeff}
	Let $m \geq 2^j$. Then, for $r \geq 1$ and a constant $C$., we have
	\begin{align*}
	\sum_{k \in \Z}\bEE{\abs{\widehat{\alpha}_{j,k} - \alpha_{j,k}}^r} &\leq C\frac{2^j}{m^{r/2}},
\\
	\sum_{k \in \Z}\bEE{\abs{\widehat{\beta}_{j,k} - \beta_{j,k}}^r} &\leq C\frac{2^j}{m^{r/2}}.
	\end{align*}
\end{corollary}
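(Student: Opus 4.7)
The plan is to observe that the sum $\sum_{k\in\Z}\bEE{\abs{\widehat{\alpha}_{j,k} - \alpha_{j,k}}^r}$ is effectively a \emph{finite} sum, with only $O(2^j)$ nonzero summands, and then apply the per-coefficient bound from \cref{clm:coeff} to each of these.

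More concretely, first I would argue that the support of the sum is small. Since $\supp(f) \subseteq [0,1]$ and $\supp(\phi_{j,k}) \subseteq [(k-A)2^{-j},(k+A)2^{-j}]$, the overlap $\supp(f) \cap \supp(\phi_{j,k})$ is empty unless $k \in \mathcal{K}_j \eqdef \{k \in \Z : [(k-A)2^{-j},(k+A)2^{-j}] \cap [0,1] \neq \emptyset\}$, which by \cref{fact:coeffnonzero} satisfies $|\mathcal{K}_j| = O(2^j)$. For any $k \notin \mathcal{K}_j$, one has simultaneously $\alpha_{j,k} = \int f(x)\phi_{j,k}(x)\,dx = 0$ and $\phi_{j,k}(X_i) = 0$ almost surely for every $i$ (so $\bar{\alpha}_{j,k} = 0$); and since $k \in \mathcal{A}_{B_i}^{(j)}$ also forces an overlap with $\supp(f)$, the quantized contribution $\widehat{\phi}_{j,k}(X_i)$ vanishes as well, giving $\widehat{\alpha}_{j,k} = 0$. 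Hence only $k \in \mathcal{K}_j$ contribute.

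Second, I would invoke \cref{clm:coeff} (whose hypothesis $m \geq 2^j$ is assumed here) to bound each remaining term by $C/m^{r/2}$. Summing gives
\[
\sum_{k \in \Z}\bEE{\abs{\widehat{\alpha}_{j,k} - \alpha_{j,k}}^r}
= \sum_{k \in \mathcal{K}_j}\bEE{\abs{\widehat{\alpha}_{j,k} - \alpha_{j,k}}^r}
\leq |\mathcal{K}_j|\cdot \frac{C}{m^{r/2}}
\leq C'\frac{2^j}{m^{r/2}}.
\]
The proof for $\widehat{\beta}_{j,k}$ is identical, using $\supp(\psi_{j,k}) \subseteq [(k-A)2^{-j},(k+A)2^{-j}]$ and the corresponding index set $\mathcal{B}_t^{(j)}$.

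There is no real obstacle here: the corollary is a direct consequence of a pointwise $\cL^r$-bound on a single coefficient (\cref{clm:coeff}) together with the counting bound (\cref{fact:coeffnonzero}) on how many wavelet indices can possibly contribute when $f$ is supported on $[0,1]$. The only thing worth being careful about is justifying that both the true coefficient \emph{and} its quantized empirical version vanish outside $\mathcal{K}_j$, which follows from the fact that the quantizer acts only on indices $k$ with $k \in \mathcal{A}_{B_i}^{(j)}$, and such indices correspond to $\supp(\phi_{j,k}) \cap [0,1] \neq \emptyset$.
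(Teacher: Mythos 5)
Your proposal is correct and is essentially the paper's own argument: the paper proves the corollary in one sentence, observing that only $O(2^j)$ indices $k$ contribute (\cref{fact:coeffnonzero}) and then applying the per-index bound of \cref{clm:coeff}. Your proposal fills in the same two steps, and usefully spells out a point the paper leaves implicit—that the \emph{quantized} empirical coefficient $\widehat{\alpha}_{j,k}$ also vanishes for $k$ outside the $O(2^j)$ relevant indices, because $k\in\cA_{B_i}^{(j)}$ requires $\supp(\phi_{j,k})$ to meet the bin $E_{B_i}^{(j)}\subseteq[0,1]$—so the argument and the paper's are the same modulo level of detail.
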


\subsection{Analyzing the error}
For the single-level estimator
\begin{align} 
	\bEE{\norm{\widehat{f}-f}_r^r}
	&= \bEE{\norm{\widehat{f}-f^{(\high)} + f^{(\high)} - f}_r^r}
        \nonumber
        \\
	&\leq 2^{r-1} \paren{ \bEE{\norm{\widehat{f}-f^{(\high)}}_r^r} + \norm{f-f^{(\high)}}_r^r },
        \label{eqn:lossbreakupsl}
\end{align}
where $f^{(\high)} = \sum_{k \in \Z} \alpha_{\high,k} \phi_{\high,k}$. Now, from Fact \ref{fact:centralbias}, we have
$\norm{f-f^{(\high)}}_r^r \lesssim 2^{-\high r \sigma}$, where
\[
\sigma = 
\begin{cases}
	s &\text{if }, r \leq p, \\
	(s-1/p+1/r), &\text{if } r > p.
\end{cases}
\]
Moreover, 
\begin{align*}
	\bEE{\norm{\widehat{f}-f^{(\high)}}_r^r}
	&= \bEE{\norm{\sum_{k \in \Z}(\hat{\alpha}_{\high,k} - \alpha_{\high,k})\phi_{\high,k}}_r^r} \\
	&\lesssim 2^{\high(r/2-1)} \sum_{k\in \Z} \bEE{ |\hat{\alpha}_{\high,k} - \alpha_{\high,k}|^r } &&\paren{\text{Fact \ref{fact:useful:rthnorm}}} \\
	&\lesssim 2^{\high(r/2-1)} \; \frac{2^\high}{m^{r/2}} &&\paren{\text{Corollary \ref{cor:coeff}}} \\
	&= \paren{ \frac{2^\high}{m} }^{r/2}.
\end{align*}
(In our case, $m,H$ will be such that $m \geq 2^\high$ holds, which is why we can use Corollary \ref{cor:coeff}.) Substituting these in \eqref{eqn:lossbreakupsl}, we get
\[
\bEE{\norm{\widehat{f}-f}_r^r} \lesssim 2^{-Hr\sigma} + \paren{ \frac{2^\high}{m} }^{r/2}.
\]
Recall that $m$ is the number of quantized samples available with the referee, where, for a constant $C$,
\[
m = 
\begin{cases}
	n &\text{if } 2^\high \lesssim 2^\ell \text{ (no simulation required)} \\
	Cn2^\ell/2^\high &\text{if } 2^\high \gtrsim 2^\ell \text{ (after simulation)}.
\end{cases}
\]
In other words (ignoring constant $C$) $m = \frac{n2^\ell}{2^\high \vee 2^\ell}$, where $a \vee b = \max\set{a,b}$. Thus,
\[
\bEE{\norm{\widehat{f}-f}_r^r} \lesssim 2^{-Hr\sigma} + \paren{ \frac{2^{\high}(2^\high \vee 2^\ell)}{n2^\ell} }^{r/2} 
\leq \ 2^{-Hr\sigma} + \paren{ \frac{2^{2\high}}{n2^\ell} }^{r/2} + \paren{ \frac{2^{\high}}{n} }^{r/2}.
\]
Setting $H$ such that
\[
2^{\high} = (n2^\ell)^{\frac{1}{2s+2}} \wedge n^{\frac{1}{2s+1}}
\]
gives us
\[
\E\norm{\widehat{f}-f}_r^r \lesssim (n2^\ell)^{-\frac{r\sigma}{2\sigma+2}} \vee n^{-\frac{r\sigma}{2\sigma+1}}.
\]
For $r \leq p$, $\sigma = s$, which proves~\cref{theo:ub:nonadaptive}. \qed

	\section{Analysis of multi-level estimator} \label{supp:multlevel}
		Our goal is to upper bound the worst-case $\cL^r$ loss $\bEE{\norm{\widehat{f}-f}_r^r}$, where $\widehat{f}$ is the estimate output by the referee. We proceed by describing the coupling in the multi-estimator setting.

\subsection{Coupling of simulated and ideal estimators}
Denote by $\bp$ the probability distribution corresponding to density $f$. Recall that we divide the players into $\high - \low + 1$ groups. Suppose the referee obtains $m_J$ (quantized) samples from players in group-$J$. To compute $\bEE{\norm{\widehat{f}-f}_r^r}$, consider the following statistically equivalent situation:
\begin{itemize}
	\item For each $J \in \ibrac{\low,\high}$, there are $m_J$ i.i.d. samples $X_{1,J},\ldots,X_{m_J,J} \sim \bp$.
	
	\item For each $i \in \ibrac{m_\low}$, let
	\begin{equation} \label{eqn:mlcoeff1}
		\widehat{\phi}_{\low,k}(X_{i,\low}) = 
		\begin{cases}
			0 & \text{if } k \notin \cA_{B_{i,\low}}^{(\low)},  \\
			2^{\low/2} Q(V_{i,\low})(k) & \text{if } k \in \cA_{B_{i,\low}}^{(\low)},
		\end{cases}
	\end{equation}
	where $B_{i,\low}$ is the bin (out of $2^\low$ bins) $X_{i,\low}$ lies in, $Q(V_{i,\low})$ is obtained by quantizing $\set{2^{-\low/2}\phi_{\low,k}(X_{i,\low})}_{k \in \cA_{B_{i,\low}}^{(\low)}}$ using Algorithm 1, and $Q(V_{i,\low})(k)$ is the entry in $Q(V_{i,\low})$ corresponding to $k \in \cA_{B_{i,\low}}^{(\low)}$. 
	In other words, $\set{\widehat{\phi}_{\low,k}(X_{i,\low})}_{k \in \Z}$ is the quantized version of $\set{\phi_{\low,k}(X_{i,\low})}_{k \in \Z}$.
	
	\item Similarly, for each $J \in \ibrac{\low,\high}$, for each $i \in \ibrac{m_J}$, let
	\begin{equation} \label{eqn:mlcoeff2}
		\widehat{\psi}_{J,k}(X_{i,J}) = 
		\begin{cases}
			0 & \text{if } k \notin \cB_{B_{i,J}}^{(J)},  \\
			2^{J/2} Q(V_{i,J})(k) & \text{if } k \in \cB_{B_{i,J}}^{(J)}.
		\end{cases}
	\end{equation}
	That is, for each $J \in \ibrac{\low,\high}$, $\set{\widehat{\psi}_{J,k}(X_{i,J})}_{k \in \Z}$ is the quantized version of $\set{\psi_{J,k}(X_{i,J})}_{k \in \Z}$.
	
	\item Given thresholds $\set{t_J}_{J \in \ibrac{\low,\high}}$, define $\widehat{f}$ as 
	\begin{equation} \label{eqn:mlest}
		\widehat{f} = \sum_{k} \widehat{\alpha}_{\low,k} \phi_{\low,k} + \sum_{J=\low}^{\high} \sum_{k} \tilde{\beta}_{J,k} \psi_{J,k},
	\end{equation}
	where 
	\begin{equation} \label{eq:empirical:coeffs:besov}
		\begin{aligned}
			\widehat{\alpha}_{\low,k} &= \frac{1}{m_\low} \sum_{i=1}^{m_\low} \widehat{\phi}_{\low,k} (X_{i,\low}), \\
			\tilde{\beta}_{J,k} &= \widehat{\beta}_{J,k} \indic{|\widehat{\beta}_{J,k}| \geq t_J}
			\text{ with } \widehat{\beta}_{J,k} = \frac{1}{m_J} \sum_{i=1}^{m_J} \widehat{\psi}_{J,k}(X_{i,J}).
		\end{aligned}
	\end{equation}
\end{itemize}
Then, computing $\cL^r$ loss for the multi-level estimator is equivalent to computing $\cL^r$ loss for $\widehat{f}$ defined in \eqref{eqn:mlest}.

\subsection{Setting parameters}
Since we want our multi-level estimator to be adaptive, the parameters $\low, \high$, and $\set{t_J}_{J \in \ibrac{\low,\high}}$ should not depend explicitly on Besov parameters. We set $\low, \high$ as
\begin{align*}
	2^\low  &\eqdef C \Paren{ (\ns 2^\numbits)^{\frac{1}{2(N+1)+2}} \land \ns^{\frac{1}{2(N+1)+1}} },  \\
	2^\high &\eqdef C' \Paren{\frac{\sqrt{\ns 2^\numbits}}{\log \ns 2^\numbits} \land \frac{\ns}{\log^2 \ns}} 
\end{align*}
where $C,C'>0$ are two constants, sufficiently large and small, respectively. Also, note that, since players in group-$J$ have alphabet size $O(2^J)$, we have
\begin{equation*}
	m_J = \frac{\ns}{(\high-\low+1) } \cdot \Paren{\frac{2^\numbits}{2^J}\land 1} \asymp \frac{\ns 2^\numbits}{ H (2^J\lor 2^\numbits)}.
\end{equation*}
Note that the setting of $\high$ implies both $\high 2^\high \ll \sqrt{\ns 2^\numbits}$ and $\high^2 2^\high \ll \ns$, and consequently $m_J \geq J 2^J$.

\paragraph{Threshold values.} How should we set the threshold values $\set{t_J}_{J \in \ibrac{\low,\high}}$?
Since we will pay a cost for the coefficients we zero out (increase in bias), we would like to choose $t_J$ as small as possible. But, in order to have reasonable concentration, we also need $t_J$ to satisfy, for every (sufficiently large) $\gamma > 0$,
\begin{equation*}
	\bPr{\abs{ \widehat{\beta}_{J,k} - \beta_{J,k} } \geq \gamma t_J} \lesssim 2^{-\gamma J}.
\end{equation*}
so that our estimates concentrate well around their true value, and we only zero them out wrongly with very small probability. Now, a natural approach to choose $t_J$ according to the constraint above would be to use Hoeffding's inequality, as $\widehat{\beta}_{J,k}$ is the empirical mean of $m_J$ unbiased estimates of $\beta_{J,k}$, each with magnitude $\lesssim 2^{J/2}$. One can check that this would lead to the setting of $t_J \asymp \sqrt{J 2^J/m_J}$, which, unfortunately, is too big (by a factor of $2^{J/2}$) to give optimal rates.

However, recall that the $m_J$ unbiased estimates, $\widehat{\psi}_{J,k}(X_{i,J})$, are not only such that $|\widehat{\psi}_{J,k}(X_{i,J})|\lesssim 2^{J/2}$; in many cases, they are actually zero, since 
$|\widehat{\psi}_{J,k}(X_{i,J})| \simeq 2^{J/2} \indic{k \in \cB_{B_{{i,J}}}^{(J)}}$. This allows us to derive the following, improving upon the na\"ive use of Hoeffding's inequality.
\begin{lemma}
	\label{lemma:concentration:tJ}
	For $J \in \ibrac{\low,\high}$, setting $t_J \eqdef \sqrt{J/m_J}$, we have
	\[
	\bPr{ \abs{\widehat{\beta}_{J,k}-\beta_{J,k}} \geq \gamma t_J } \leq 2^{-\gamma J}
	\]
	for every $\gamma \geq 6A\norminf{f}$.
\end{lemma}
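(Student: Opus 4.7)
The plan is to prove the lemma by applying Bernstein's inequality (\cref{theo:bernstein}) to the sum $m_J(\widehat{\beta}_{J,k}-\beta_{J,k})=\sum_{i=1}^{m_J} Y_i$, where $Y_i \eqdef \widehat{\psi}_{J,k}(X_{i,J})-\beta_{J,k}$ are i.i.d.\ mean-zero random variables. The central observation, already flagged in the paragraph preceding the lemma, is that $\widehat{\psi}_{J,k}(X_{i,J})$ is \emph{zero unless} $k\in \cB_{B_{i,J}}^{(J)}$, i.e.\ unless $X_{i,J}\in \supp(\psi_{J,k})$, an event of probability at most $|\supp(\psi_{J,k})|\cdot\norminf{f}\leq 2A\norminf{f}/2^J$. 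This sparsity of activation is precisely what makes the variance much smaller than the almost-sure magnitude, and explains why Bernstein strictly improves on the Hoeffding bound that would give $t_J\asymp \sqrt{J2^J/m_J}$.

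First I would bound the almost-sure magnitude: from the quantizer guarantee (\cref{clm:quantizer}) together with the $2^{J/2}$ scaling inside $V_{i,J}$, one has $|\widehat{\psi}_{J,k}(X_{i,J})|\leq c\cdot 2^{J/2}$ for some absolute constant $c$ depending only on $\|\psi\|_\infty$ and $d=|\cB_t^{(J)}|=O(1)$; combined with $|\beta_{J,k}|\lesssim 2^{-J/2}$ (a direct estimate from H\"older's inequality using $\norm{\psi_{J,k}}_1\lesssim 2^{-J/2}$), we obtain $|Y_i|\leq b\lesssim 2^{J/2}$. Next, I would bound the variance by writing
\[
\bEE{Y_i^2}\leq \bEE{\widehat{\psi}_{J,k}(X_{i,J})^2}\leq c^2\cdot 2^J\cdot \bPr{X_{i,J}\in \supp(\psi_{J,k})}\leq 2Ac^2\norminf{f},
\]
where the cancellation of $2^J$ against $2^{-J}$ is the key payoff of the support-sparsity observation; call this bound $v$.

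Then I would apply Bernstein with $u=\gamma m_J t_J=\gamma\sqrt{Jm_J}$, giving
\[
\bPr{\abs{\widehat{\beta}_{J,k}-\beta_{J,k}}\geq \gamma t_J}\leq \exp\Paren{-\frac{\gamma^2 Jm_J}{2(m_J v+bu/3)}}=\exp\Paren{-\frac{\gamma^2 J/2}{v+\frac{b\gamma}{3}\sqrt{J/m_J}}}.
\]
Now invoking the parameter regime $m_J\geq J2^J$ recorded just before the lemma, we have $b\sqrt{J/m_J}\lesssim 2^{J/2}\cdot 2^{-J/2}=O(1)$, so the subexponential term in the denominator is at most a constant times $\gamma$, and the Gaussian-like term $v$ is at most a constant times $\norminf{f}$. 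The final step is to choose the threshold $\gamma\geq 6A\norminf{f}$ so that the Gaussian term dominates and the exponent is at least $\gamma J\ln 2$, delivering the claimed $2^{-\gamma J}$.

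The main obstacle is purely the bookkeeping of constants: one has to track the constant $c$ arising from the quantizer and the support radius $A$ through both the variance bound and the subexponential term, and verify that the specific threshold $6A\norminf{f}$ is large enough to absorb both contributions simultaneously. Conceptually, nothing beyond the variance-vs-magnitude separation enabled by \cref{clm:sparsity} and the lower bound $m_J\geq J2^J$ is needed.
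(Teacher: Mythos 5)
Your proof is correct and follows essentially the same route as the paper: both hinge on the key observation (flagged before the lemma) that $\widehat{\psi}_{J,k}(X_{i,J})$ is nonzero only on an event of probability $O(2^{-J})$, which gives an $O(1)$ variance despite the $O(2^{J/2})$ magnitude, and both then invoke Bernstein with $m_J \gtrsim J 2^J$ to absorb the subexponential term. The only cosmetic difference is that you center the summands by $\beta_{J,k}$ (harmless, since $|\beta_{J,k}|\lesssim 2^{-J/2}\ll 2^{J/2}$) whereas the paper applies Bernstein to $\sum_i\widehat{\psi}_{J,k}(X_{i,J})$ directly and subtracts the mean afterward, and you track the quantizer constant $c = O(Bd)$ explicitly while the paper absorbs it into the definitions of $b$ and $v$.
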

\begin{proof}
	Fix $J,k$, and consider any $i \in \ibrac{m_J}$. Since $\abs{\widehat{\psi}_{J,k}(X_{i,J})}\leq b \eqdef 2^{J/2}$ and 
	\[
	\bEE{\widehat{\psi}_{J,k}(X_{i,J})^2} = 2^J \bPr{k \in \cB_{B_{i,J}}^{(J)}} \leq 2^J\cdot \norminf{f}\cdot \frac{2A}{2^J} = 2A\norminf{f} \eqdef v
	\]
	where the inequality follows from our assumption that $\supp{\psi}\subseteq [-A,A]$. In particular, we have $v\asymp 1$. Recalling the definition of $\widehat{\beta}_{J,k}$ from~\eqref{eq:empirical:coeffs:besov}, we can apply Bernstein's inequality (\cref{theo:bernstein}) to obtain, for $t \geq 0$ and $\gamma \geq 3v$,
	\[
	\bPr{ \abs{\widehat{\beta}_{J,k}-\beta_{J,k}} \geq \gamma t }
	\leq 
	e^{-\frac{ 3\gamma^2 m_J t^2}{6 v + 2 b \gamma t}} = 
	e^{-\frac{3}{2}\cdot \frac{\gamma m_J t^2}{\frac{3 v}{\gamma} + 2^{J/2} t}}
	\leq e^{-\frac{3}{2}\cdot \frac{\gamma m_J t^2}{1 + 2^{J/2} t}}
	\leq 2^{-\frac{2\gamma m_J t^2}{1 + 2^{J/2} t}}.
	\]
	Setting $t_J \eqdef \sqrt{\frac{J}{m_J}}\lor \frac{J2^{J/2}}{m_J}$, we get $\bPr{ \abs{\widehat{\beta}_{J,k}-\beta_{J,k}} } \leq 2^{-\gamma J}$.  
	Finally, our setting of $\high$ and $m_J$ together imply that $t_J \eqdef \sqrt{\frac{J}{m_J}}$, as (from our choice of parameters) $m_J \geq J 2^J$ for all $J \leq \high$.
\end{proof}

\paragraph{Conclusion.} For constants $C,C',\kappa > 0$, the values of parameters are summarized below.
\begin{align}
	2^\low  &\eqdef C \Paren{ (\ns 2^\numbits)^{\frac{1}{2(N+1)+2}} \land \ns^{\frac{1}{2(N+1)+1}} }  \\
	2^\high &\eqdef C' \Paren{\frac{\sqrt{\ns 2^\numbits}}{\log \ns 2^\numbits} \land \frac{\ns}{\log^2 \ns}}  \\
	m_J &\eqdef \frac{\ns}{(\high-\low+1) } \cdot \Paren{\frac{2^\numbits}{2^J}\land 1} \asymp \frac{\ns 2^\numbits}{ H (2^J\lor 2^\numbits)} \\
	t_J &\eqdef \kappa \sqrt{\frac{J}{m_J}}  
\end{align}
As previously mentioned choices imply both $\high 2^\high \ll \sqrt{\ns 2^\numbits}$ and $\high^2 2^\high \ll \ns$, and consequently $m_J \geq J 2^J$, $J \in \ibrac{\low,\high}$.

\subsection{Analysing the error}

Following the outline of Theorem~3 of~\cite{DonohoJKP96} and Theorem~5.1 of~\cite{ButuceaDKS20}, we will bound $\cL_r$ loss as
\begin{equation}
	\label{eq:bounding:adaptive:error}
	\bEE{\norm{f-\widehat{f}}_r^r} \leq 3^{r-1}\Paren{
		\operatorname{bias}(f)
		+ \operatorname{linear}(f)
		+ \operatorname{details}(f)
	}
\end{equation}
where
\begin{align*}
	\operatorname{bias}(f) &= \bEE{\norm{ f - \sum_{k\in\Z} \alpha_{\high,k}\phi_{\high,k} }_r^r} \\
	\operatorname{linear}(f) &= \bEE{\norm{ \sum_{k\in\Z}  (\widehat{\alpha}_{\low,k}-\alpha_{\low,k})\phi_{\low,k} }_r^r} \\
	\operatorname{details}(f) &= \bEE{\norm{ \sum_{J=\low}^{\high} \sum_{k\in\Z} (\widetilde{\beta}_{J,k}-\beta_{J,k})\psi_{J,k} }_r^r}
\end{align*}
and handle each of the three terms separately. Note that only the third term relates to thresholding. 
\subsubsection{Linear and bias terms}
\textbf{Linear term.} To bound $\operatorname{linear}(f)$, we invoke~\cref{fact:useful:rthnorm,cor:coeff} as in the analysis of single-level estimator. This gives
\begin{align}
	\bEE{ \norm{\sum_{k \in \Z} (\widehat{\alpha}_{\low,k} - \alpha_{\low,k}) \phi_{\low,k}}_r^r }
	&\lesssim 2^{\low(\frac{r}{2}-1)} \sum_{k \in \Z} \bEE{\abs{\widehat{\alpha}_{\low,k} - \alpha_{\low,k}}^r}
	\lesssim 2^{\low(\frac{r}{2}-1)}\cdot\frac{2^{\low}}{m_{\low}^{r/2}} \\
	&= \high^{\frac{r}{2}} \Paren{ \Paren{ \frac{2^{2\low}}{\ns 2^\numbits} }^{r/2} \lor \Paren{ \frac{2^{\low}}{\ns} }^{r/2} } \notag\\
	&\lesssim \high^{\frac{r}{2}} \Paren{ (\ns 2^\numbits)^{-\frac{r(N+1)}{2(N+1)+2}} \lor \ns^{-\frac{r(N+1)}{2(N+1)+1}} } 
	\leq \high^{\frac{r}{2}} \Paren{ (\ns 2^\numbits)^{-\frac{rs}{2s+2}} \lor \ns^{-\frac{rs}{2s+1}} }  \label{eq:linear:term}
\end{align}
where the second-to-last inequality relies on our choice of $\low$.

\textbf{Bias term.} To bound $\operatorname{bias}(f)$, we use Fact \ref{fact:centralbias} to get, with $s' = s-1/p+1/r$,
\begin{equation}
	\label{eq:bias:term}
	\operatorname{bias}(f) \leq C\cdot 2^{-\high s' r}
	\leq C'\cdot
	\Paren{\sqrt{\frac{\log^2(\ns 2^\numbits)}{\ns 2^\numbits}} \lor \frac{\log^2 \ns}{\ns}}^{r(s-1/p+1/r)}.
\end{equation}

\subsubsection{Details term}
To bound the term $\operatorname{details}(f)$, we define, for $J \in \ibrac{\low,\high}$, the three sets of indices:
\begin{align*}
	\widehat{\cI}_{J} &\eqdef \{ k\in\Z : |\widehat{\beta}_{J,k}| > \kappa t_J \} \tag{estimate big: not thresholded}\\
	\cI_{J}^{s} &\eqdef \{ k\in\Z : |\beta_{J,k}| \leq \tfrac{1}{2}\kappa t_J \} \tag{small coefficients}\\
	\cI_{J}^{b} &\eqdef \{ k\in\Z : |\beta_{J,k}| > 2\kappa t_J \} \tag{big coefficients}
\end{align*}
We will partition the error according to these sets of indices, and argue about them separately. Specifically, we write
\begin{align*}
	\operatorname{details}(f) 
	&= \shortexpect\Big[\Big\| \sum_{J=\low}^{\high} \sum_{k\in \widehat{\cI}_{J} \cap \cI_{J}^{s}} (\widetilde{\beta}_{J,k}-\beta_{J,k})\psi_{J,k} \Big\|_r^r\Big]
	+ \shortexpect\Big[\Big\| \sum_{J=\low}^{\high} \sum_{k\in \widehat{\cI}_{J} \setminus \cI_{J}^{s}} (\widetilde{\beta}_{J,k}-\beta_{J,k})\psi_{J,k} \Big\|_r^r\Big] \\
	&\qquad+ \shortexpect\Big[\Big\| \sum_{J=\low}^{\high} \sum_{k\in \cI_{J}^{b}\setminus\widehat{\cI}_{J}} \beta_{J,k}\psi_{J,k} \Big\|_r^r\Big] 
	+ \shortexpect\Big[\Big\| \sum_{J=\low}^{\high} \sum_{k\notin \cI_{J}^{b}\cup\widehat{\cI}_{J}} \beta_{J,k}\psi_{J,k} \Big\|_r^r\Big]\\
	&= E_{bs} + E_{bb} + E_{sb} + E_{ss}, we 
\end{align*}
the four errors coming from the ``big-small,'' ``big-big,'' ``small-big,''  and``small-small'' indices, respectively. Our analysis is along the lines of that in \cite{ButuceaDKS20}.

\paragraph{The term $E_{bs}$.}
We can write 
\begin{align*}
	E_{bs} 
	&\lesssim H^{r/2} \sum_{J=\low}^{\high} 2^{J(\frac{r}{2}-1)} \sum_{k\in\Z} \bEE{ |\widehat{\beta}_{J,k}-\beta_{J,k}|^r \indicSet{k\in\widehat{\cI}_{J} \cap \cI_{J}^{s}}} \tag{\cref{fact:useful:rthnorm}}\\
	&\lesssim H^{r/2} \sum_{J=\low}^{\high} 2^{J(\frac{r}{2}-1)} \sum_{k\in\Z} \bEE{ |\widehat{\beta}_{J,k}-\beta_{J,k}|^{2r}}^{\frac{1}{2}} \bPr{k\in\widehat{\cI}_{J} \cap \cI_{J}^{s}}^{\frac{1}{2}} \tag{Cauchy--Schwarz}\\
	&\lesssim H^{r/2} \sum_{J=\low}^{\high} 2^{J(\frac{r}{2}-1)} \sum_{k\in\Z} \bEE{ |\widehat{\beta}_{J,k}-\beta_{J,k}|^{2r}}^{\frac{1}{2}} \bPr{|\widehat{\beta}_{J,k} - \beta_{J,k}| > \tfrac{\kappa}{2}t_J }\\
	&\lesssim H^{r/2} \sum_{J=\low}^{\high} 2^{J(\frac{r}{2}-1)} \sum_{k\in\Z} \bEE{ |\widehat{\beta}_{J,k}-\beta_{J,k}|^{2r}}^{\frac{1}{2}} 2^{-\frac{\kappa}{2} J} \tag{\cref{{lemma:concentration:tJ}}}\\
	&\lesssim H^{r/2} \sum_{J=\low}^{\high} 2^{J(\frac{r-\kappa}{2}-1)} \frac{2^J}{m_J^{r/2}} 
\end{align*}
where the last inequality follows from~\cref{clm:coeff} and the $O(2^J)$-sparsity of coefficients (Fact \ref{fact:coeffnonzero}). Going forward, recalling our setting of $m_J$ we get
\begin{align}
	\label{eq:ebs}
	E_{bs} 
	&\lesssim H^{r/2} \sum_{J=\low}^{\high}\frac{2^{J\frac{r-\kappa}{2}}}{m_J^{r/2}}
	\lesssim H^{r/2} \Paren{\frac{\high}{\ns 2^\numbits}}^{r/2}\sum_{J=\low}^{\high} 2^{J\frac{r-\kappa}{2}}(2^J\lor 2^\numbits)^{r/2} \notag \\
	&\lesssim H^{r/2} \Paren{\frac{\high}{\ns 2^\numbits}}^{r/2}\sum_{J=\low}^{\high} 2^{J(r-\frac{\kappa}{2})} + H^{r/2} \Paren{\frac{\high}{\ns}}^{r/2}\sum_{J=\low}^{\high} 2^{J\frac{r-\kappa}{2}} \notag \\
	&\lesssim H^{r/2} \Paren{\frac{\high}{\ns 2^\numbits}}^{r/2} 2^{\low (r-\kappa/2)} + H^{r/2} \Paren{\frac{\high}{\ns}}^{r/2} 2^{\low\frac{r-\kappa}{2}}
	\lesssim \high^{r} \ns^{-r/2}
\end{align}
where the inequalities hold for $\kappa > 2r$.

\paragraph{The term $E_{bb}$.}
Turning to the term $E_{bb}$, we have%
\begin{align*}
	E_{bb} &= \Big\| \sum_{J=\low}^{\high} \sum_{k\in \Z} (\widehat{\beta}_{J,k}-\beta_{J,k})\psi_{J,k} \indicSet{k\in \widehat{\cI}_{J} \setminus \cI_{J}^{s}} \Big\|_r^r \\
	&\lesssim H^{r/2} \sum_{J=\low}^{\high} 2^{J(r/2-1)} \sum_{k\in\Z} \bEE{ |\widehat{\beta}_{J,k}-\beta_{J,k}|^r \indicSet{k\in \widehat{\cI}_{J} \setminus \cI_{J}^{s}}} \tag{\cref{fact:useful:rthnorm}}\\
	&\lesssim H^{r/2} \sum_{J=\low}^{\high} \frac{2^{J(r/2-1)}}{m_J^{r/2}} \sum_{k\in\Z} \indicSet{k\notin \cI_{J}^{s}} 
\end{align*}
using that $\indicSet{k\in \widehat{\cI}_{J} \setminus \cI_{J}^{s}}\leq \indicSet{k\notin \cI_{J}^{s}}$ and~\cref{clm:coeff}. Using the definition of $\cI_{J}^{s}$, for any nonnegative sequence $(\alpha_J)_J$ we can further bound this as
\begin{align}
	E_{bb}
	&\lesssim H^{r/2} \sum_{J=\low}^{\high} \frac{2^{J(r/2-1)}}{m_J^{r/2}}  \sum_{k\in\Z} \indicSet{k\notin \cI_{J}^{s}} |\beta_{J,k}|^{\alpha_J} \Paren{\kappa t_J/2}^{-\alpha_J} \notag\\
	&\leq H^{r/2} \sum_{J=\low}^{\high} \frac{2^{J(r/2-1)}}{m_J^{r/2}}  \Paren{\kappa t_J/2}^{-\alpha_J} \sum_{k\in\Z} |\beta_{J,k}|^{\alpha_J}  \notag\\
	&= H^{r/2} \sum_{J=\low}^{\high} 2^{\alpha_J}\kappa^{-\alpha_J} J^{-\frac{\alpha_J}{2}}\frac{2^{J(r/2-1)}}{m_J^{(r-\alpha_J)/2}}  \sum_{k\in\Z} |\beta_{J,k}|^{\alpha_J}  \notag\\
	&\leq H^{r/2} \sum_{J=\low}^{\high} \frac{2^{J(r/2-1)}}{m_J^{(r-\alpha_J)/2}}  \sum_{k\in\Z} |\beta_{J,k}|^{\alpha_J} \label{eq:ebb:reusable:for:ess}
\end{align}
the last inequality using $\kappa/2\geq 1$ and $J\geq 1$ to simplify the expression a little. For now, we ignore the factor $H^{r/2}$ (we will bring it back at the end), and look at two cases:
\begin{itemize}
	\item If $p > \frac{r}{s+1}$, we continue by writing
	\begin{align*}
		E_{bb} &\lesssim \sum_{J=\low}^{\high} \frac{2^{J(r/2-1)}}{m_J^{(r-\alpha_J)/2}}  2^{-J\alpha_J(s+\frac{1}{2}-\frac{1}{\alpha_J})} 
		\asymp \sum_{J=\low}^{\high} m_J^{-\frac{r-\alpha_J}{2}} 2^{\frac{1}{2}J(r-(2s+1)\alpha_J)}
		= \sum_{J=\low}^{\high} 2^{J\frac{r}{2}} m_J^{-\frac{r}{2}} \cdot 2^{\frac{\alpha_J}{2}(\log m_J - J(2s+1))}
	\end{align*}

	where the first inequality, which holds for any $\alpha_J\in[0,p]$, uses the following bound on $\sum_k|\beta_{J,k}|^p$: 
	For any $\alpha\in[0,p]$, we have from~\cref{fact:useful:bound:beta:norm} and H\"older's inequality along with the sparsity of coefficients (Fact \ref{fact:coeffnonzero}), that 
	\[
	\sum_{k\in\Z} |\beta_{J,k}|^{\alpha} = \sum_{k\in\Z} |\beta_{J,k}|^{\alpha}\indic{\beta_{J,k}\neq 0} 
	\leq \Big( \sum_{k\in\Z} |\beta_{J,k}|^{p} \Big)^{\frac{\alpha}{p}}\abs{\cB_J}^{1-\frac{\alpha}{p}}
	\leq C^{\frac{\alpha}{p}} (2A+1)^{1-\frac{\alpha}{p}} \cdot 2^{-J\alpha(s+\frac{1}{2}-\frac{1}{p})} \cdot 2^{J(1-\frac{\alpha}{p})}
	\]
	so that $\sum_{k\in\Z} |\beta_{J,k}|^{\alpha}\lesssim 2^{-J\alpha(s+\frac{1}{2}-\frac{1}{\alpha})}$, as in~\cite[Section~C.2.3]{ButuceaDKS20}. To bound the resulting sum, we need to choose $\alpha_J\in[0,p]$ for all $J$ in order to minimize the result. Since $m_J \propto 2^{J-\numbits}\land 1$, the quantity
	\[
	\log m_J - J(2s+1)
	\]
	is decreasing in $J$, and thus becomes negative at some value $\medium$ (for simplicity, assumed to be an integer), such that\footnote{To see why, recall that \[
		\log m_J - J(2s+1) = \log\frac{\ns}{\high} - (J-\numbits)_+ - J(2s+1) + O(1)
		\]
		from our setting of $m_J$. Finding the value of $J$ for which $\log\frac{\ns}{\high} - (J-\numbits)_+ - J(2s+1)$ cancels gives the claimed relation.}
	\begin{equation}
		\label{eq:setting:M}
		2^{\medium} \asymp  \Paren{ \frac{\ns 2^\numbits}{\high } }^{\frac{1}{2s+2}} \land \Paren{ \frac{\ns}{\high} }^{\frac{1}{2s+1}}
	\end{equation}
	we see that we should set $\alpha_J \eqdef 0$ for $J\leq \medium$, and for $J>M$ set all $\alpha_J$ to some value $\alpha=\alpha(r,s)$ which will balance the remaining terms. With this choice, we can write
	\begin{align*}
		\sum_{J=\low}^{\high} m_J^{-\frac{r-\alpha_J}{2}} 2^{\frac{1}{2}J(r-(2s+1)\alpha_J)}
		&\leq \sum_{J=\low}^{\medium} m_J^{-\frac{r}{2}} 2^{J\frac{r}{2}} + \sum_{J=\medium}^{\high} m_J^{-\frac{r-\alpha}{2}} 2^{\frac{1}{2}J(r-(2s+1)\alpha)}  \\
		&\leq  
		\Paren{\frac{\high}{\ns 2^\numbits}}^{\frac{r}{2}} \sum_{J=1}^{\medium}2^{Jr} 
		+ \Paren{\frac{\high}{\ns}}^{\frac{r}{2}} \sum_{J=1}^{\medium}2^{J\frac{r}{2}} \\
		&\qquad+ \Paren{\frac{\high}{\ns 2^\numbits}}^{\frac{r-\alpha}{2}} \sum_{J=\medium}^{\high} 2^{J(r-(s+1)\alpha)}
		+ \Paren{\frac{\high}{\ns}}^{\frac{r-\alpha}{2}}\sum_{J=\medium}^{\high} 2^{\frac{1}{2}J(r-(2s+1)\alpha)}
	\end{align*}
	recalling for the second inequality that $m_J^{-1} \asymp \frac{\high}{\ns}(\frac{2^J}{2^\numbits}\lor 1)$. 
	We can bound the first and second terms as
	\[
	\Paren{\frac{\high}{\ns 2^\numbits}}^{\frac{r}{2}} \sum_{J=1}^{\medium}2^{Jr}
	\leq \frac{2^r}{2^r - 1}\Paren{\frac{\high}{\ns 2^\numbits}}^{\frac{r}{2}} 2^{r\medium}, \qquad 
	\Paren{\frac{\high}{\ns}}^{\frac{r}{2}} \sum_{J=1}^{\medium}2^{J\frac{r}{2}}
	\leq \frac{2^{r/2}}{2^{r/2} - 1}\Paren{\frac{\high}{\ns}}^{\frac{r}{2}} 2^{\frac{r}{2}\medium}
	\]
	and from~\eqref{eq:setting:M} we get that their sum is then
	\[
	\Paren{\frac{\high}{\ns 2^\numbits}}^{\frac{r}{2}} \sum_{J=1}^{\medium}2^{Jr} + \Paren{\frac{\high}{\ns}}^{\frac{r}{2}} \sum_{J=1}^{\medium}2^{J\frac{r}{2}}
	\lesssim  \Paren{\frac{\high}{\ns 2^\numbits}}^{\frac{r}{2}} 2^{r\medium} \lor \Paren{\frac{\high}{\ns}}^{\frac{r}{2}} 2^{\frac{r}{2}\medium}
	\lesssim \Paren{\frac{\high}{\ns 2^\numbits}}^{\frac{rs}{2s+2}} \lor \Paren{\frac{\high}{\ns}}^{\frac{rs}{2s+1}}\,.
	\]
	Thus, it only remains to handle the third and fourth terms by choosing a suitable value for $\alpha$.
	Recalling that we are in the case $p > \frac{r}{s+1}$, we pick any $\frac{r}{s+1} < \alpha \leq p$; for instance, $\alpha\eqdef p$. Since  $r-(s+1)p < 0$ we then have
	\begin{align*}
		\Paren{\frac{\high}{\ns 2^\numbits}}^{\frac{r-p}{2}}  \sum_{J=\medium}^{\high} 2^{J(r-(s+1)p)}
		\leq \Paren{\frac{\high}{\ns 2^\numbits}}^{\frac{r-p}{2}} \frac{2^{\medium(r-(s+1)p)}}{1-2^{r-(s+1)p}}
		\asymp \Paren{\frac{\high}{\ns 2^\numbits}}^{\frac{r-p}{2}} 2^{\medium(r-(s+1)p)}\,;
	\end{align*}
	note that $\frac{1}{1-2^{r-(s+1)p}}>0$ is a constant, depending only on $r,s,p$. Similarly, $r-(2s+1)p < 0$, and so
	\[
	\Paren{\frac{\high}{\ns}}^{\frac{r-p}{2}}\sum_{J=\medium}^{\high} 2^{\frac{1}{2}J(r-(2s+1)p)}
	\leq \Paren{\frac{\high}{\ns}}^{\frac{r-p}{2}} \frac{2^{\frac{1}{2}\medium(r-(2s+1)p)}}{1-2^{\frac{1}{2}(r-(2s+1)p)}}
	\asymp \Paren{\frac{\high}{\ns}}^{\frac{r-p}{2}} 2^{\frac{1}{2}\medium(r-(2s+1)p)}.
	\]
	From the setting of $\medium$ from~\eqref{eq:setting:M}, by a distinction of cases we again can bound their sum as
	\[
	\Paren{\frac{\high}{\ns 2^\numbits}}^{\frac{r-p}{2}}  \sum_{J=\medium}^{\high} 2^{J(r-(s+1)p)}
	+ \Paren{\frac{\high}{\ns}}^{\frac{r-p}{2}}\sum_{J=\medium}^{\high} 2^{\frac{1}{2}J(r-(2s+1)p)}
	\lesssim \Paren{\frac{\high}{\ns 2^\numbits}}^{\frac{rs}{2s+2}} \lor \Paren{\frac{\high}{\ns}}^{\frac{rs}{2s+1}}\,.
	\]
	Therefore, overall, in the case $p > \frac{r}{s+1}$ we have (bringing back the factor $H^{r/2}$ we had ignored earlier)
	\begin{equation}
		\label{eq:ebb:1}
		E_{bb} \lesssim H^{r/2} \Paren{\frac{\high}{\ns 2^\numbits}}^{\frac{rs}{2s+2}} \lor H^{r/2} \Paren{\frac{\high}{\ns}}^{\frac{rs}{2s+1}}\,.
	\end{equation}

	\item If $p \leq \frac{r}{s+1}$, we will choose $\alpha_J \geq p$ for all $J$. Under this constraint, we can use the monotonicity of $\lp[p]$ norms (for every $x$, $\norm{x}_p \leq \norm{x}_q$ if $p\geq q$) to write
	\begin{align*}
		E_{bb}
		&\lesssim \sum_{J=\low}^{\high} \frac{2^{J(r/2-1)}}{m_J^{(r-\alpha_J)/2}}  \sum_{k\in\Z} |\beta_{J,k}|^{\alpha_J} 
		\leq \sum_{J=\low}^{\high} \frac{2^{J(r/2-1)}}{m_J^{(r-\alpha_J)/2}} \Paren{ \sum_{k\in\Z} |\beta_{J,k}|^{p} }^{\alpha_J/p} \\
		&\lesssim \sum_{J=\low}^{\high} \frac{2^{J(r/2-1)}}{m_J^{(r-\alpha_J)/2}} 2^{-J\alpha_J(s+\frac{1}{2}-\frac{1}{p})} \tag{\cref{fact:useful:bound:beta:norm}} \\
		&= \sum_{J=\low}^{\high} m_J^{-\frac{r-\alpha_J}{2}} 2^{J(\frac{r}{2}-1-\alpha_J(s+\frac{1}{2}-\frac{1}{p}))}.
	\end{align*}
	As before, one can see that for there exists some $\medium$ such that the best choice is to set $\alpha_J = p$ for $J\leq \medium$ (as small as possible given our constraint $\alpha_J \geq p$). Moreover, proceeding as in the previous case,\footnote{That is, find the value $J$ solving (approximately) the equation $\log\frac{\ns}{\high} - (J-\numbits)_+ - J(2s+1-2/p)=0$ (note that the LHS is again decreasing in $J$).} we can see that this $\medium$ is such that
	\begin{equation}
		\label{eq:setting:M:2}
		2^{\medium} \asymp  \Paren{ \frac{\ns 2^\numbits}{\high } }^{\frac{1}{2(s-1/p)+2}} \land \Paren{ \frac{\ns}{\high} }^{\frac{1}{2(s-1/p)+1}}.
	\end{equation}
	
	This part of the sum will then contribute
	\begin{align*}
		\sum_{J=\low}^{\medium} m_J^{-\frac{r-p}{2}} 2^{J(\frac{r}{2}-1-p(s+\frac{1}{2}-\frac{1}{p}))}
		& \asymp\Paren{\frac{\high}{\ns 2^\numbits}}^{\frac{r-p}{2}}\sum_{J=\low}^{\medium} 2^{J(r-1-p(s+1-\frac{1}{p}))}
		+ \Paren{\frac{\high}{\ns}}^{\frac{r-p}{2}}\sum_{J=\low}^{\medium} 2^{J(\frac{r}{2}-1-p(s+\frac{1}{2}-\frac{1}{p}))} \\
		&\asymp \Paren{\frac{\high}{\ns 2^\numbits}}^{\frac{r-p}{2}}2^{\medium(r-p(s+1))} 
		+ \Paren{\frac{\high}{\ns}}^{\frac{r-p}{2}}2^{\medium(\frac{r}{2}-1-p(s+\frac{1}{2}-\frac{1}{p}))} \\
		&\asymp \Paren{\frac{\high}{\ns 2^\numbits}}^{\frac{r(s-1/p+1/r)}{2(s-1/p)+2}} \lor \Paren{\frac{\high}{\ns}}^{\frac{r(s-1/p+1/r)}{2(s-1/p)+1}}.
	\end{align*}
	For $J>\medium$, we choose an arbitrary constant $\alpha \geq p$ such that $\alpha > \frac{r-1}{s+1-1/p}$ (so that $r-1-\alpha(s+1-1/p) < 0$), and set $\alpha_J=\alpha$ for all $J>\medium$.\footnote{It will be important later, when bounding $E_{ss}$, to note that $\frac{r-1}{s+1-1/p}<r$, and thus one can also enforce $\alpha \leq r$.} Observe that this implies $\alpha > \frac{r/2-1}{s+1/2-1/p}$. This part of the sum will then contribute at most
	\begin{align*}
		\sum_{J=\medium}^{\high} m_J^{-\frac{r-\alpha}{2}} 2^{J(\frac{r}{2}-1-\alpha(s+\frac{1}{2}-\frac{1}{p}))}
		&\asymp \Paren{\frac{\high}{\ns 2^\numbits}}^{\frac{r-\alpha}{2}}\sum_{J=\medium}^{\high} 2^{J(r-1-\alpha(s+1-\frac{1}{p}))}
		+ \Paren{\frac{\high}{\ns}}^{\frac{r-\alpha}{2}}\sum_{J=\medium}^{\high} 2^{J(\frac{r}{2}-1-\alpha(s+\frac{1}{2}-\frac{1}{p}))} \\
		&\asymp \Paren{\frac{\high}{\ns 2^\numbits}}^{\frac{r-\alpha}{2}} 2^{\medium(r-1-\alpha(s+1-\frac{1}{p}))}
		+ \Paren{\frac{\high}{\ns}}^{\frac{r-\alpha}{2}}2^{\medium(\frac{r}{2}-1-\alpha(s+\frac{1}{2}-\frac{1}{p}))} \\
		&\asymp \Paren{\frac{\high}{\ns 2^\numbits}}^{-\frac{r(s-1/p+1/r)}{2(s-1/p)+2}}\lor \Paren{\frac{\high}{\ns}}^{\frac{r(s-1/p+1/r)}{2(s-1/p)+1}}
	\end{align*}
	as well. Thus, overall, in the case $p \leq \frac{r}{s+1}$ we have (bringing back the factor $H^{r/2}$ we had ignored earlier)
	\begin{equation}
		\label{eq:ebb:2}
		E_{bb} \lesssim H^{r/2} \Paren{\frac{\high}{\ns 2^\numbits}}^{\frac{r(s-1/p+1/r)}{2(s-1/p)+2}}\lor H^{r/2} \Paren{\frac{\high}{\ns}}^{\frac{r(s-1/p+1/r)}{2(s-1/p)+1}}\,.
	\end{equation}
\end{itemize}

\paragraph{The term $E_{sb}$.} To handle the term $E_{sb}$, we will rely on the fact that, for any $r\geq p$, we have the inclusion $\besov(p,q,s)\subseteq \besov(r,q,s')$, for $s' = s - \Paren{\frac{1}{p} - \frac{1}{r}}$. This will let us use~\cref{fact:useful:bound:beta:norm} on $\sum_{k\in\Z} |\beta_{J,k}|^r$:
\begin{align}
	E_{sb}
	&\lesssim H^{r/2} \sum_{J=\low}^{\high} 2^{J(\frac{r}{2}-1)} \sum_{k\in\Z} \bEE{ |\beta_{J,k}|^r \indicSet{k\in\cI_{J}^{b}\setminus \widehat{\cI}_{J}}} \tag{\cref{fact:useful:rthnorm}}\\ 
	&\lesssim H^{r/2} \sum_{J=\low}^{\high} 2^{J(\frac{r}{2}-1)} \sum_{k\in\Z} |\beta_{J,k}|^r \bPr{|\widehat{\beta}_{J,k} - \beta_{J,k}| > \kappa t_J } \notag\\ 
	&\lesssim H^{r/2} \sum_{J=\low}^{\high} 2^{J(\frac{r}{2}-1)} \sum_{k\in\Z} |\beta_{J,k}|^r 2^{-\kappa J} \tag{\cref{{lemma:concentration:tJ}}}\\ 
	&\lesssim H^{r/2} \sum_{J=\low}^{\high} 2^{J(\frac{r}{2}-1-\kappa)} 2^{-J r(s'+\frac{1}{2}-\frac{1}{r})} \tag{\cref{fact:useful:bound:beta:norm}}\\
	&= H^{r/2} \sum_{J=\low}^{\high} 2^{-J(r s'+\kappa) }  \leq H^{r/2} \frac{2^{-\low(r s'+\kappa) }}{1-2^{-(r s'+\kappa)}} \notag\\
	&\lesssim H^{r/2} 2^{-\low r (N+1)} \lesssim H^{r/2} (\ns 2^\numbits)^{-\frac{r(N+1)}{2(N+1)+2}} \lor H^{r/2} \ns^{-\frac{r(N+1)}{2(N+1)+1}}
	\leq H^{r/2} (\ns 2^\numbits)^{-\frac{rs}{2s+2}} \lor H^{r/2} \ns^{-\frac{rs}{2s+1}} \label{eq:esb}
\end{align}
where for the third-to-last inequality we relied on our choice of $\kappa \geq r(N+1)$, and for the second-to-last, on our setting of $\low$.
\paragraph{The term $E_{ss}$.} 

Finally, we bound the last error term for $\operatorname{details}(f)$, $E_{ss}$. In view of proceeding as for $E_{bb}$, for any nonnegative sequence $(\alpha_J)_J$ with $0\leq \alpha_J \leq r$, we can write
\begin{align*}
	E_{ss}
	&\lesssim H^{r/2} \sum_{J=\low}^{\high} 2^{J(\frac{r}{2}-1)} \sum_{k\in\Z} \bEE{ |\beta_{J,k}|^r \indicSet{k\in\cI_{J}^{s}\setminus \widehat{\cI}_{J}}} \tag{\cref{fact:useful:rthnorm}}\\ 
	&\leq H^{r/2} \sum_{J=\low}^{\high} 2^{J(\frac{r}{2}-1)} \sum_{k\in\Z} |\beta_{J,k}|^r \indicSet{k\in\cI_{J}^{s}}\\ 
	&\leq H^{r/2} \sum_{J=\low}^{\high} 2^{J(\frac{r}{2}-1)} \sum_{k\in\Z} |\beta_{J,k}|^{\alpha_J} (\kappa t_J/2)^{r-\alpha_J} \indicSet{k\in\cI_{J}^{s}}\\ 
	&\leq H^{r/2} (\kappa/2)^r\sum_{J=\low}^{\high} 2^{J(\frac{r}{2}-1)}  J^{\frac{r-\alpha_J}{2}} m_J^{-(r-\alpha_J)/2} \sum_{k\in\Z} |\beta_{J,k}|^{\alpha_J}\\ 
	&\leq (\kappa/2)^r \high^{r}\sum_{J=\low}^{\high} \frac{2^{J(r/2-1)}}{m_J^{(r-\alpha_J)/2}} \sum_{k\in\Z} |\beta_{J,k}|^{\alpha_J}
\end{align*}
which is, except for the extra factor of $(\kappa/2)^r \high^{\frac{r}{2}}$, exactly the same expression as~\eqref{eq:ebb:reusable:for:ess}. We can thus continue the analysis of $E_{ss}$ the same way as we did $E_{bb}$, noting that since $r\geq p$ all the choices for $\alpha_J$ in that analysis are still possible; leading to the bound:
\begin{equation}
	\label{eq:ess}
	E_{ss} \lesssim 
	\begin{cases}
		\high^{r}\cdot\Paren{\Paren{\frac{\high}{\ns 2^\numbits}}^{\frac{rs}{2s+2}}\lor \Paren{\frac{\high}{\ns}}^{\frac{rs}{2s+1}} }  & p > \frac{r}{s+1}\\
		\high^{r}\cdot\Paren{ \Paren{\frac{\high}{\ns 2^\numbits}}^{\frac{r(s-1/p+1/r)}{2(s-1/p)+2}}\lor \Paren{\frac{\high}{\ns}}^{\frac{r(s-1/p+1/r)}{2(s-1/p)+1}} }& p \leq \frac{r}{s+1}.
	\end{cases}
\end{equation}

\subsubsection{Total error} 
Defining, for $r\geq 1$, $s\geq 0$, and $p\geq 1$, the quantities
\[
\nu(r,p,s) \eqdef \frac{rs}{2s+2} \indicSet{p > \frac{r}{s+1}} + \frac{r(s-1/p+1/r)}{2(s-1/p)+2} \indicSet{p\leq \frac{r}{s+1}}
\]
and
\[
\mu(r,p,s) \eqdef \frac{rs}{2s+1} \indicSet{p > \frac{r}{s+1}} + \frac{r(s-1/p+1/r)}{2(s-1/p)+1} \indicSet{p\leq \frac{r}{s+1}}
\]
we can gather all the error terms from~\cref{eq:linear:term,eq:bias:term,eq:ebs,eq:ebb:1,eq:ebb:2,eq:esb,eq:ess}, to get
\[
\bEE{\norm{f-\widehat{f}}_r^r} 
\lesssim \high^\kappa\Paren{ (\ns 2^\numbits)^{-\frac{r(s-1/p+1/r)}{2}}\lor \ns^{-r(s-1/p+1/r)} + (\ns 2^\numbits)^{-\frac{rs}{2s+2}}\lor \ns^{-\frac{rs}{2s+1}} + (\ns 2^\numbits)^{-\nu(r,p,s)}\lor \ns^{-\mu(r,p,s)} }
\]
where $\kappa=\kappa(s,r,p)$ is a constant obtained for simplicity by taking the maximum of the exponent of $\high$ in the previous bounds. %
To simplify this expression, we observe that the following holds for all $p,r,s\geq 1$:
\begin{itemize}
  \item $\nu(r,p,s) \leq \frac{r(s-1/p+1/r)}{2}$
  \item $\nu(r,p,s) \leq \frac{rs}{2s+2}$
  \item $\mu(r,p,s) \leq r\Paren{s-\frac{1}{p}+\frac{1}{r}}$
  \item $\mu(r,p,s) > \frac{rs}{2s+1}$ if, and only if, $r \in ((s+1)p,(2s+1)p)$  (and of course $\mu(r,p,s) = \frac{rs}{2s+1}$ if $r \leq (s+1)p$)
\end{itemize}
(this follows from somewhat tedious algebraic manipulations and distinctions of cases). Given the above, we finally get the following bound (where we loosened the bound on the exponent of $\high$ to make the result simpler to state):
\begin{align}
	\bEE{\norm{f-\widehat{f}}_r^r} 
	&\lesssim \high^{\kappa} \Paren{ (\ns 2^\numbits)^{-\nu(r,p,s)}\lor \ns^{-\mu(r,p,s)} \lor \ns^{-\frac{rs}{2s+1}} } \notag\\
	&= \log^{\kappa} \ns \cdot \begin{cases}
	 (\ns 2^\numbits)^{-\frac{rs}{2s+2}}\lor \ns^{-\frac{rs}{2s+1}} &\text{ if } r < (s+1)p \\
	 (\ns 2^\numbits)^{-\frac{r(s-1/p+1/r)}{2(s-1/p)+2}}\lor \ns^{-\frac{rs}{2s+1}} &\text{ if } (s+1)p \leq r < (2s+1)p \\
	 (\ns 2^\numbits)^{-\frac{r(s-1/p+1/r)}{2(s-1/p)+2}}\lor \ns^{-\frac{r(s-1/p+1/r)}{2(s-1/p)+1}} &\text{ if } r \geq (2s+1)p
	\end{cases} 
\end{align}
which proves Theorem 1.3 in the main paper.\qed

	\section{Proof of lower bounds} \label{supp:lowerbound}
		Our lower bound construction will depend on whether $r < (s+1)p$ or $r \geq (s+1)p$. Before delving into these cases, we first (a)~recall the result from \cite{AcharyaCT20} that we will use to upperbound average discrepancy; (b)~discuss how the consideration of binary hypothesis testing problem gives a lower bound on average discrepancy.

\subsection{Upper bound on average discrepancy} \label{sec:divupbound}
Consider the following assumptions on $\cP = \{\p_z : z \in \set{-1,1}^d\}$, where $\p_z$'s are probability distributions on $[0,1]$.

\begin{assumption}[Densities exist]
	\label{assn:1}
	For every $z \in \set{-1,1}^d$ and $i \in \ibrac{d}$, there exist functions $\phi_{z,i}:[0,1] \to \R$ such that $\E_{\p_z}[\phi_{z,i}^2] = 1$ and 
	\[
	\frac{d\p_{z^{\oplus i}}}{d\p_z} = 1 + \alpha \phi_{z,i}
	\]
	where $\alpha \in \R$ is a fixed constant independent of $z,i$.
\end{assumption}

\begin{assumption}[Boundedness of ratios of message probabilities]
	\label{assn:2}
	There exists some $\lambda \in [1,\infty]$ such that  
	\[
	\max_{z \in \set{-1,1}^d, y \in \set{0,1}^\numbits} \ \sup_{W \in \cW} 
	\frac{ \E_{\p_{z^{\oplus i}}} [W(y|X)] }{ \E_{\p_z} [W(y|X)] } \leq \lambda
	\]
	where $\cW = \set{W\colon[0,1] \to \set{0,1}^\numbits}$ is the collection of all $\numbits$-bit channels.
\end{assumption} In particular, if $\frac{d\p_{z^{\oplus i}}}{d\p_z}$ are uniformly bounded (over $z,i$) by $\lambda'$, then this assumption is satisfied with $\lambda = \lambda'$ \cite{AcharyaCT20}.

\begin{assumption}[Orthonormality]
	\label{assn:3}
	For all $z \in \set{-1,1}^d$ and $i,j \in \ibrac{d}$, $\E_{\p_z}[\phi_{z,i} \phi_{z,j}] = \indic{i=j}$.
\end{assumption}

\begin{theorem} [\cite{AcharyaCT20}] \label{thm:AcharyaCT20}
	Suppose $\cP$ satisfies~\cref{assn:1,assn:2,assn:3}. For some $\tau \in (0,1/2]$, let $\pi$ be a prior on $Z \in \set{-1,1}^d$ defined as $Z_i \sim {\tt Rademacher}(\tau)$ independently for each $i \in \ibrac{d}$. For $Z \sim \pi$, let $X_1,\ldots,X_n$ be i.i.d. samples from $\p_Z$. Then, for any interactive protocol generating $\numbits$-bit messages $Y_1,\ldots,Y_n$, we have
	\[
	\paren{ \frac{1}{d}\sum_{i=1}^{d}\totalvardist{\p^{Y^n}_{-i}}{\p^{Y^n}_{+i}} }^2
	\leq
	\frac{2}{d} (\lambda \wedge \tau^{-1}) n \alpha^2 2^\numbits.
	\]
\end{theorem}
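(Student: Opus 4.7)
The plan is to follow the chi-squared contraction methodology introduced in \cite{AcharyaCLST20} and extended to estimation in \cite{AcharyaCT20}. First, Cauchy--Schwarz gives
\[
\Paren{ \frac{1}{d}\sum_{i=1}^d \totalvardist{\p^{Y^n}_{-i}}{\p^{Y^n}_{+i}} }^2 \leq \frac{1}{d}\sum_{i=1}^d \totalvardist{\p^{Y^n}_{-i}}{\p^{Y^n}_{+i}}^2,
\]
and the standard inequality $2\totalvardist{\p}{\q}^2 \leq \chi^2(\p,\q)$ further reduces the task to proving $\sum_{i=1}^d \chi^2(\p^{Y^n}_{+i}, \p^{Y^n}_{-i}) \lesssim (\lambda\wedge\tau^{-1})\, n\, \alpha^2\, 2^\numbits$. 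The crucial feature of this target bound is that it does \emph{not} scale with $d$; that cancellation is what ultimately produces the $1/d$ factor in the theorem.

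For the heart of the argument, I would condition on the transcript history and handle one round at a time. For a single round with channel $W\colon\cX\to\{0,1\}^\numbits$, a direct computation using \cref{assn:1} yields
\[
\chi^2\!\Paren{\p_{z^{\oplus i}}^W,\, \p_z^W} = \alpha^2 \sum_{y\in\{0,1\}^\numbits} \frac{\Paren{\bE{\p_z}{\phi_{z,i}(X)\,W(y\mid X)}}^2}{\bE{\p_z}{W(y\mid X)}}.
\]
\cref{assn:2} then lets one lower-bound the denominator by $\bE{\p_{z^{\oplus i}}}{W(y\mid X)}/\lambda$ (or alternatively by a $\tau$-fraction of the $\pi$-marginal of $\p_Z^{Y^n}$ in view of~\eqref{eq:def:p+1:p-1}, yielding the $\tau^{-1}$ branch); combined with $\bE{\p_z}{\phi_{z,i}^2}=1$ and Cauchy--Schwarz over the $2^\numbits$ outputs, this bounds a single round's chi-squared by $(\lambda\wedge\tau^{-1})\,\alpha^2\,2^\numbits$.

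Iterating this per-round bound via a chain-rule/conditioning argument over the $n$ interactive rounds accumulates $n(\lambda\wedge\tau^{-1})\alpha^2 2^\numbits$ per coordinate $i$. The subtle step is then summing over $i$ \emph{without} incurring an additional factor of $d$; this is where \cref{assn:3} enters, as expanding the chi-squared through its variance representation introduces inner products $\bE{\p_z}{\phi_{z,i}\phi_{z,j}}$ that vanish for $i\neq j$, collapsing the cross terms so that the aggregate remains of order $(\lambda\wedge\tau^{-1})\,n\,\alpha^2\,2^\numbits$ rather than $d$ times that.

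The main obstacle is executing the chain-rule decomposition cleanly in the presence of both \emph{interactivity} (the channel $W_t$ at round $t$ is a random function of $Y_1,\dots,Y_{t-1}$) and \emph{mixing} (each $\p^{Y^n}_{\pm i}$ is itself a mixture over $Z_{-i}$ under $\pi$), neither of which $\chi^2$ tensorizes under directly. The standard remedy, which I would follow, is to decompose $\chi^2(\p^{Y^n}_{+i},\p^{Y^n}_{-i})$ against a common reference measure (the $Z_{-i}$-marginal of $\p_Z^{Y^n}$) and use Fubini to swap the per-round chi-squared expansion with the mixture over $Z_{-i}$; orthonormality then kills the off-diagonal contributions coordinate-wise. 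I would import this decomposition essentially verbatim from the proof in \cite{AcharyaCT20}, checking only that nothing in our nonparametric setup breaks \cref{assn:1,assn:2,assn:3}.
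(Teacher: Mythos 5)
The paper does not prove \cref{thm:AcharyaCT20}: it is imported as a black box from \cite{AcharyaCT20} (hence the bracketed citation), so there is no in-paper proof to compare against. Your outline does capture the chi-squared-contraction architecture of that reference, but one step is misattributed and would not survive being made precise. You claim that Cauchy--Schwarz over the $2^\numbits$ outputs already yields a \emph{per-coordinate} one-round bound of order $\alpha^2 2^\numbits$, with \cref{assn:3} needed only afterwards to keep the sum over $i$ from scaling like $d$. In fact, for fixed $i$ the one-round bound is just $\alpha^2$, with no $2^\numbits$: starting from $\chi^2(\p^W_{z^{\oplus i}},\p^W_z) = \alpha^2 \sum_y (\E_{\p_z}[\phi_{z,i} W(y\mid X)])^2/\E_{\p_z}[W(y\mid X)]$, Cauchy--Schwarz gives $(\E_{\p_z}[\phi_{z,i} W(y\mid X)])^2 \leq \E_{\p_z}[\phi_{z,i}^2 W(y\mid X)]\,\E_{\p_z}[W(y\mid X)]$, and summing over $y$ telescopes to $\E_{\p_z}[\phi_{z,i}^2]=1$. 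The factor $2^\numbits$ only appears at the \emph{aggregate} level, and only because of orthonormality: Bessel's inequality via \cref{assn:3} yields $\sum_i (\E_{\p_z}[\phi_{z,i} W(y\mid\cdot)])^2 \leq \E_{\p_z}[W(y\mid\cdot)^2] \leq \E_{\p_z}[W(y\mid\cdot)]$ (using $0\leq W\leq 1$), so each output symbol $y$ contributes at most $1$ to $\sum_i \chi^2_i/\alpha^2$, and summing over the $2^\numbits$ outputs gives $\sum_i \chi^2_i \leq \alpha^2 2^\numbits$. Thus orthonormality and the communication constraint are inseparable here: a single Bessel step simultaneously produces the $2^\numbits$ and eliminates the $d$; it is not a per-coordinate $2^\numbits$ bound whose sum orthonormality then tames. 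The remainder of your outline (TV-to-$\chi^2$, chain rule for interactivity, common-reference-measure decomposition for the $Z_{-i}$ mixture, and the $\lambda\wedge\tau^{-1}$ branch from \cref{assn:2}) is consistent with the argument in \cite{AcharyaCT20}, and your decision to import that decomposition verbatim is reasonable given the theorem is lifted verbatim from that reference.
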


\subsection{Lower bound on average discrepancy} \label{sec:divlowbound}
For $Z \sim \pi$, let $X_1,\ldots,X_n$ be i.i.d.\ samples from $\p_Z$ distributed across $n$ players, and let $Y_1,\ldots,Y_n$ be $\numbits$-bit messages sent by the players (possibly interactively) to the referee. Based on the $\numbits$-bit messages $Y_1,\ldots,Y_n$, suppose the referee outputs an estimate $\hat{Z} = (\hat{Z}_1,\ldots,\hat{Z}_d)$ of $Z = (Z_1,\ldots,Z_d)$. Then, an upper bound on $\sum_{i=1}^{d}\Pr\set{\hat{Z}_i \neq Z_i}$ gives a lower bound on $\sum_{i=1}^{d}\kldiv{\p^{Y^n}_{-i}}{\p^{Y^n}_{+i}}$. To see this, note that, for a given $i \in \ibrac{d}$,
\begin{align*}
	\Pr\set{\hat{Z}_i \neq Z_i} 
	&= \Pr\set{\hat{Z}_i = -1 | Z_i = 1}\Pr\set{Z_i=1} + \Pr\set{\hat{Z}_i =1 | Z_i = -1}\Pr\set{Z_i=-1} \\
	&= \tau \paren{1 - \Pr\set{\hat{Z}_i = 1 | Z_i = 1}} + (1-\tau)\Pr\set{\hat{Z}_i =1 | Z_i = -1} \\
	&\geq \tau \paren{1 - \Pr\set{\hat{Z}_i = 1 | Z_i = 1}} + \tau \Pr\set{\hat{Z}_i =1 | Z_i = -1} \tag{since $(1-\tau)\geq \tau$ for $\tau \leq 1/2$} \\
	&= \tau\paren{1 - \paren{\Pr\set{\hat{Z}_i = 1 | Z_i = 1} - \Pr\set{\hat{Z}_i = 1 | Z_i = -1}} } \\
	&\geq \tau \paren{ 1 - \totalvardist{\p^{Y^n}_{+i}}{\p^{Y^n}_{-i}} }.
\end{align*}
Thus, 
\begin{align*}
	\sum_{i=1}^{d} \Pr\set{\hat{Z}_i \neq Z_i} 
	\geq \tau \paren{ d - \sum_{i=1}^{d}\totalvardist{\p^{Y^n}_{+i}}{\p^{Y^n}_{-i}} }
\end{align*}
which gives
\begin{equation} \label{eqn:dtvlb}
	\frac{1}{d}\sum_{i=1}^{d}\totalvardist{\p^{Y^n}_{+i}}{\p^{Y^n}_{-i}}
	\geq 1 - \frac{1}{d\tau}\sum_{i=1}^{d} \Pr\set{\hat{Z}_i \neq Z_i}.
\end{equation}
In conclusion, to get a lower bound on average discrepancy, it suffices to upperbound 
$\sum_{i=1}^{d} \Pr\set{\hat{Z}_i \neq Z_i}$ for an estimator $\hat{Z}$ of $Z$.

\subsection{Lower bound on $\cL_r^*(\ns, \numbits, p,q,s)$ for $r < (s+1)p$} \label{sec:lb1}
{\bf Construction.} The family of distributions $\cP_1$ that we will use to derive lower bound when $r < (s+1)p$ has also been used in deriving lower bounds in the unconstrained setting \cite{DonohoJKP96,HardleKPT12} and in the LDP setting \cite{ButuceaDKS20}. \medskip

Let $g_0$ be a density function (see~\cite[p.157]{HardleKPT12}) such that 
\begin{enumerate}
	\item ${\rm supp}(g_0) \subseteq [0,1]$;
	\item $\norm{g_0}_{pqs} \leq 1/2$;
	\item $g_0 \equiv c_0 > 0$ on some interval $[a,b] \subseteq [0,1]$.
\end{enumerate}
In what follows, $j$ is a free parameter that will be suitably chosen later in the proof. Let $\psi_{j,k}$ be defined as $\psi_{j,k}(x) = 2^{j/2}\psi(2^jx - k)$, where $\psi$ is the mother wavelet used to define $\norm{\cdot}_{pqs}$ (see~\cref{sec:besov}). It is a fact that $\int \psi_{j,k}(x) dx = 0$ for every $j,k$ \cite{HardleKPT12}. \medskip

For a given $z \in \set{-1,1}^d$, define
\begin{equation}
	f_z \eqdef g_0 + \gamma \sum_{k \in \cI_j}z_k \psi_{j,k}
\end{equation}
where 
\begin{itemize}
	\item $\cI_j$ is the set of indices $k \in \Z$ such that
	\begin{itemize}
		\item[i.] ${\rm supp}(\psi_{j,k}) \subseteq [a,b]$ for every $k \in \cI_j$;
		\item[ii.] for $k,k' \in \cI_j$, $k \neq k'$, $\psi_{j,k}$ and $\psi_{jk'}$ have disjoint support;
		\item[iii.] $d\eqdef \abs{\cI_j} =  C 2^j$, for a constant $C$. Here on, we will assume for simplicity that $d = 2^j$.
	\end{itemize}	
	\item $\gamma$ is chosen such that
	\begin{itemize}
		\item[i.] $f_z(x) \geq c_0/2$ for every $x \in [a,b]$; this condition is satisfied if $c_0 - \gamma 2^{j/2}\norm{\psi}_\infty \geq c_0/2$, \ie $\gamma \leq (c_0/2\norm{\psi}_\infty) 2^{-j/2}$.
		\item[ii.] $\norm{f_z}_{pqs} \leq 1$; since $\norm{f_z}_{pqs} \leq \norm{g_0}_{pqs} + \gamma \norm{\psi_{j,k}}_{pqs} \leq 1/2 + \gamma C 2^{j/p} 2^{j(s+1/2-1/p)}$ (see pg. 160 in \cite{HardleKPT12}), we get that $\norm{f_z}_{pqs} \leq 1$ if $\gamma \leq (1/2C) 2^{-j(s+1/2)}$.
	\end{itemize}
	Since $s > 1/p > 0$, we get that for $j$ large enough, if $\gamma$ satisfies condition (ii), it automatically satisfies condition (i). Thus, we choose $\gamma = C 2^{-j(s+1/2)}$ for some constant $C$.
\end{itemize}
Finally, we define the family of distributions as
\begin{equation}
	\cP_1 = \set{\p_z : \p_z \text{ has density } f_z = g_0 + \gamma \sum_{k \in \cI_j}z_k \psi_{j,k}, \ z \in \set{-1,1}^d }.
\end{equation}

\paragraph{Prior on $Z$.} We assume a uniform prior on $Z \in \set{-1,1}^d$, \ie $Z_i \sim {\tt Rademacher}(1/2)$ independently for each $i \in \ibrac{d}$.

\paragraph{Upper bound on average discrepancy.} To upperbound average discrepancy, we verify that $\cP_1$ satisfies the three assumptions described in~\cref{sec:divupbound}, and then use \cref{thm:AcharyaCT20}. For any $z \in \set{-1,1}^d$, $k \in \ibrac{d}$, we have
\[
\frac{d\p_{z^{\oplus k}}}{d\p_z}(x) = 1 - \frac{2\gamma z_k \psi_{j,k}(x)}{c_0 + \gamma z_k \psi_{j,k}(x)}.
\]
Since ${\rm supp}(\psi_{j,k}) \cap {\rm supp}(\psi_{j,k'})$ is empty for $k \neq k'$, it follows that~\cref{assn:1,assn:3} hold. Moreover, from our setting of $\gamma$, we have that, for every $x$, $c_0 + \gamma z_k \psi_{j,k}(x) \in [c_0/2,3c_0/2]$ for every $z_k$. This implies that 
$\frac{d\p_{z^{\oplus k}}}{d\p_z}(x) = \frac{c_0 - \gamma z_k \psi_{j,k}(x)}{c_0 + \gamma z_k \psi_{j,k}(x)} \leq 3$. Thus,~\cref{assn:2} holds with $\lambda = 3$ (importantly, this is a constant). We now compute an upper bound on $\alpha^2 \eqdef \E_{\p_z} \brac{ \paren{ \frac{\gamma z_k \psi_{j,k}(X)}{c_0 + \gamma z_k \psi_{j,k}(X)} }^2 }$.
\begin{align*}
	\E_{\p_z} \brac{ \paren{ \frac{2 \gamma z_k \psi_{j,k}(X)}{c_0 + \gamma z_k \psi_{j,k}(X)} }^2 } 
	&= 4 \gamma^2 \int_{{\rm supp}(\psi_{j,k})} \frac{\psi_{j,k}(x)^2 (c_0 + \gamma z_k \psi_{j,k}(x))}{(c_0 + \gamma z_k \psi_{j,k}(x))^2}  dx \\
	&= 4 \gamma^2 \int_{{\rm supp}(\psi_{j,k})} \frac{\psi_{j,k}(x)^2}{c_0 + \gamma z_k \psi_{j,k}(x)} dx \\
	&\leq 2 \gamma^2 c_0 \int_{{\rm supp}(\psi_{j,k})} \psi_{j,k}(x)^2 dx \tag{as $c_0 + \gamma z_k \psi_{j,k}(x) \geq c_0/2$} \\
	&\leq 2 \gamma^2 c_0 \times (2^{j/2}\norm{\psi}_\infty)^2  \times {\rm length}({\rm supp}(\psi_{j,k})) \\
	&\leq 2 \gamma^2 c_0 \times (2^{j/2}\norm{\psi}_\infty)^2  \times \frac{C''}{2^j} \tag{for a constant $C''>0$} \\
	&= C' \gamma^2 \tag{for a constant $C'>0$} \\
	&= C 2^{-j(2s+1)} \tag{for a constant $C>0$}.
\end{align*}
Thus, using~\cref{thm:AcharyaCT20}, we get
\begin{equation} \label{eqn:lb1.0}
	\paren{ \frac{1}{2^j}\sum_{k \in \cI_j} \totalvardist{\p^{Y^n}_{-k}}{\p^{Y^n}_{+k}} }^2 \lesssim (n2^\numbits) 2^{-2j(s+1)}.
\end{equation}

\paragraph{Lower bound on average discrepancy.} To lower bound the average discrepancy, we will use the idea described in~\cref{sec:divlowbound}. Consider a communication-constrained density estimation algorithm (possibly interactive) that outputs $\hat{f}$ satisfying $\sup_{f \in \cB(p,q,s)} \bE{f}{\big\| \hat f-f \big\|_r^r} \leq \dst^r$. Using this density estimator, we estimate $\hat{Z}$ as
\[
\hat{Z} = \underset{z}{\rm argmin} \norm{f_z - \hat{f}}_r.
\] 
Then
\begin{equation} \label{eqn:lb1.1}
	\begin{aligned}
		\E_{\p_z} \brac{\norm{f_z - f_{\hat{Z}}}_r^r } 
		&\leq 2^{r-1} \paren{ \E_{\p_z} \brac{\norm{f_z - \hat{f}}_r^r } + \E_{\p_z} \brac{\norm{f_z - f_{\hat{Z}}}_r^r } }
		\leq 2^r \dst^r.
	\end{aligned}
\end{equation}
Now, for $z \neq z'$, we have
\begin{align*}
	\norm{f_z - f_{z'}}_r^r
	&= \int_{0}^{1} \abs{f_z(x) - f_{z'}(x)}^rdx \\
	&= \gamma^r \int_{0}^{1} \abs{\sum_{k \in \cI_j} \psi_{j,k}(x) \indic{z_k \neq z'_k}}^r dx \\
	&= \gamma^r \int_{0}^{1} \sum_{k \in \cI_j} \abs{\psi_{j,k}(x)}^r \indic{z_k \neq z'_k} dx \tag{since the $\psi_{j,k}$'s have disjoint supports} \\
	&= \gamma^r  \sum_{k \in \cI_j} \int_{{\rm supp}(\psi_{j,k})} \abs{\psi_{j,k}(x)}^r \indic{z_k \neq z'_k} dx \\
	&= \gamma^r (2^{j/2}\norm{\psi}_\infty)^r \frac{C''}{2^j} \sum_{k \in \cI_j} \indic{z_k \neq z'_k} \tag{for a constant $C''>0$}\\
	&= C' \gamma^r 2^{j(r/2-1)} \sum_{k \in \cI_j} \indic{z_k \neq z'_k} \tag{for a constant $C'>0$}\\
	&= C 2^{-j(rs+1)} \sum_{k \in \cI_j} \indic{z_k \neq z'_k} \tag{for a constant $C>0$}.
\end{align*}
which gives that, for an estimator $\hat{Z}$,
\[
\E_{\p_z} \brac{ \norm{f_z - f_{\hat{Z}}}_r^r } = C 2^{-j(rs+1)} \sum_{k \in \cI_j} \Pr\set{Z_k \neq \hat{Z}_k}.
\]
Combining this with \eqref{eqn:lb1.1}, we get
\begin{equation} \label{eqn:lb1.2}
	\sum_{k \in \cI_j} \Pr\set{\hat{Z}_k \neq Z_k} \lesssim \dst^r 2^{j(rs+1)}.
\end{equation}
Thus, substituting $d = 2^j$ and $\tau = 1/2$ in \eqref{eqn:dtvlb} (and ignoring multiplicative constants), we get
\begin{align*}
	\frac{1}{2^j}\sum_{k \in \cI_j}\totalvardist{\p^{Y^n}_{-k}}{\p^{Y^n}_{+k}} 
	&\gtrsim 1 - \frac{2}{2^j} \dst^r 2^{j(rs+1)} 
	\simeq 1 - \dst^r 2^{jrs}.
\end{align*}
Now, observe that $j$ is a free parameter that we can choose. If we choose $j$ such that
\begin{equation} \label{eqn:lb1.3}
	\dst^r 2^{jrs} \simeq 1
\end{equation}
then we get
\begin{equation*}
	\frac{1}{2^j}\sum_{k \in \cI_j}\totalvardist{\p^{Y^n}_{-k}}{\p^{Y^n}_{+k}} \gtrsim 1
\end{equation*}
or
\begin{equation} \label{eqn:lb1.4}
	\paren{ \frac{1}{2^j}\sum_{k \in \cI_j}\totalvardist{\p^{Y^n}_{-k}}{\p^{Y^n}_{+k}} }^2 \gtrsim 1
\end{equation}

\paragraph{Putting things together.} From \eqref{eqn:lb1.0} and \eqref{eqn:lb1.4}, we get that, for $j$ satisfying $\dst^r 2^{jrs} \simeq 1$,
\[
1 \lesssim (n2^\numbits) 2^{-2j(s+1)}.
\]
which gives
\[
2^j \lesssim (n2^\numbits)^{\frac{1}{2s+2}}.
\]
Using $\dst^r 2^{jrs} \simeq 1$, we finally get
\[
\dst^r \gtrsim (n2^\numbits)^{-\frac{rs}{2s+2}}
\]
which is our desired lower bound.

\subsection{Lower bound on $\cL_r^*(\ns, \numbits, p,q,s)$ for $r \geq (s+1)p$} \label{sec:lb2}
{\bf Construction.} The family of distributions $\cP_2$ that we will use to derive lower bound when $r \geq (s+1)p$ is not exactly the same as that in the unconstrained and in the LDP setting \cite{DonohoJKP96,HardleKPT12,ButuceaDKS20}; but, combined with the prior that we will choose on $Z$, it will essentially mimic that. \medskip

Let $g_0, \psi_{j,k}, \cI_j$ be as in~\cref{sec:lb1}.
For a given $z \in \set{-1,1}^d$ (where $d \eqdef \abs{\cI_j}$), define
\begin{equation}
	f_z \eqdef g_0 + \gamma \sum_{k \in \cI_j}(1+z_k) \psi_{j,k}.
\end{equation}
where we will choose $\gamma$ after we describe the prior on $Z$.
Finally, we define the family of distributions as
\begin{equation}
	\cP_2 = \set{\p_z : \p_z \text{ has density } f_z = g_0 + \gamma \sum_{k \in \cI_j}(1+z_k) \psi_{j,k}, \ z \in \set{-1,1}^d }.
\end{equation}

\paragraph{Prior on $Z$.} We assume a ``sparse'' prior on $Z \in \set{-1,1}^d$, defined as $Z_k \sim {\tt Rademacher}(1/d)$ independently for each $k \in \ibrac{d}$. We call it ``sparse'' because, with high probability, for $Z = (Z_1,\ldots,Z_d)$ sampled from this prior, the number of indices $k$ with $Z_k=1$ will be small (we will quantify this soon). Now, since $f_Z = g_0 + \gamma \sum_{k \in \cI_j}(1+Z_k) \psi_{j,k}$, this means that with high probability $1+Z_k = 0$ for a large number of $k$'s, and thus there will be only a few ``bumps'' in $f_Z$.

\paragraph{Choosing $\gamma$.} Define $\cG \subset \set{-1,1}^d$ as
\[
\cG \eqdef \set{ z \in \set{-1,1}^d: \sum_{k=1}^{d}\indic{z_k = 1} \leq 2j }.
\]
Then, by Bernstein's inequality
\begin{equation} \label{eqn:lb2bern}
	\Pr\set{Z \in \cG} \geq 1 - 4\cdot 2^{-2j}.
\end{equation}
We will choose $\gamma$ such that
\begin{itemize}
	\item[i.] $f_z(x) \geq c_0/2$ for every $x \in [a,b]$; as seen in~\cref{sec:lb1}, this condition is satisfied if $\gamma \leq (c_0/2\norm{\psi}_\infty) 2^{-j/2}$.
	\item[ii.] $\norm{f_z}_{pqs} \leq 1$ for every $z \in \cG$; argument similar to that in~\cref{sec:lb1} gives that $\norm{f_z}_{pqs} \leq 1$ for $z \in \cG$ if $\gamma \lesssim 2^{-j(s+1/2-1/p)} j^{-1/p}$.
\end{itemize}
Since $s > 1/p$, we get that for $j$ large enough ($j$ is a free parameter that we choose later), if $\gamma$ satisfies condition (ii), it automatically satisfies condition (i). Thus, we choose $\gamma = C 2^{-j(s+1/2-1/p)} j^{-1/p}$ for some constant $C$. \smallskip

Note that, for $z \notin \cG$, this choice of $\gamma$ still results in $f_z$ being a density function (since $\int \psi_{j,k}(x) dx = 0$), but it may be the case that $\norm{f_z}_{spq} > 1$.

\paragraph{Upper bound on average discrepancy.} To upperbound average discrepancy, we verify that $\cP_2$ satisfies the three assumptions described in~\cref{sec:divupbound}. For any $z \in \set{-1,1}^d$, $k \in \ibrac{d}$, we have
\begin{align*}
	\frac{dp_{z^{\oplus k}}}{dp_z} (x)
	&= \frac{g_0 + \gamma \sum_{k \in I_j}(1-z_k) \psi_{j,k}(x)}{g_0 + \gamma \sum_{k \in I_j}(1+z_k) \psi_{j,k}(x)} \\
	&= 1 - \frac{2\gamma z_k \psi_{j,k}(x)}{c_0 + \gamma \sum_{k \in I_j}z_k \psi_{j,k}(x)}
\end{align*}
which is same as what we had in~\cref{sec:lb1}. Similar arguments lead to the conclusion that~\cref{assn:1,assn:2,assn:3} are satisfied. Moreover, an upper bound on $\alpha^2 \eqdef \E_{\p_z} \brac{ \paren{ \frac{2\gamma z_i \psi_{j,k}(X)}{c_0 + \gamma z_i \psi_{j,k}(X)} }^2 }$ follows similarly (with different value of $\gamma$), and we get that
\[
\alpha^2 \leq C 2^{-2j(s+1/2-1/p)}j^{-2/p}. \tag{for a constant $C>0$}
\] 
Thus, using~\cref{thm:AcharyaCT20}, we get
\begin{equation} \label{eqn:lb2.0}
	\paren{ \frac{1}{2^j}\sum_{k \in \cI_j} \totalvardist{\p^{Y^n}_{-k}}{\p^{Y^n}_{+k}} }^2 \lesssim (n2^\numbits)2^{-2j(s+1-1/p)} j^{-2/p}.
\end{equation}

\paragraph{Lower bound on average discrepancy.} To lowerbound average discrepancy, we proceed as in~\cref{sec:lb1}. Consider a communication-constrained density estimation algorithm (possibly interactive) that outputs $\hat{f}$ satisfying $\sup_{f \in \cB(p,q,s)} \bE{f}{\big\| \hat f-f \big\|_r^r} \leq \dst^r$. Using this density estimator, we estimate $\hat{Z}$ as
\[
\hat{Z} = \underset{z}{\rm argmin} \norm{f_z - \hat{f}}_r.
\] 
Then, for $z \in \cG$,
\begin{equation} \label{eqn:lb2.1}
	\E_{\p_z} \brac{\norm{f_z - f_{\hat{Z}}}_r^r } \leq 2^r \dst^r.
\end{equation}
This only holds for $z \in \cG$ because the estimator's guarantee only holds if samples come from a density $f$ satisfying $\norm{f}_{spq}\leq 1$.
Now, for $z \neq z'$, plugging in the value of $\gamma$ in the calculation done in~\cref{sec:lb1}, we get
\begin{align*}
	\norm{f_z - f_{z'}}_r^r
	= C j^{-r/p}  2^{-j\paren{r(s-1/p)+1}} \sum_{k \in \cI_j} \indic{z_k \neq z'_k} \tag{for a constant $C>0$}
\end{align*}
which gives that, for any estimator $\hat{Z}$,
\[
\E_{\p_z} \brac{ \norm{f_z - f_{\hat{Z}}}_r^r } 
=C j^{-r/p}  2^{-j\paren{r(s-1/p)+1}} \sum_{k \in \cI_j} \Pr\set{\hat{Z}_k \neq Z_k}.
\]
Combining this with \eqref{eqn:lb2.1}, we get that
\begin{equation} \label{eqn:lb2.2a}
	\sum_{k \in \cI_j} \Pr\set{Z_k \neq \hat{Z}_k, Z \in \cG} \lesssim \dst^r j^{r/p} 2^{j\paren{r(s-1/p)+1}} .
\end{equation}
Thus,
\begin{align*}
	\sum_{k \in \cI_j} \Pr\set{\hat{Z}_k \neq Z_k}
	&= \sum_{k \in \cI_j} \Pr\set{Z_k \neq \hat{Z}_k, Z \in \cG} + \sum_{k \in \cI_j} \Pr\set{Z_k \neq \hat{Z}_k, Z \notin \cG} \\
	&\leq \sum_{k \in \cI_j} \Pr\set{Z_k \neq \hat{Z}_k, Z \in \cG} + \paren{ \sum_{k \in \cI_j} \Pr\set{\hat{Z}_k \neq Z_k| Z \notin \cG} } \Pr\set{Z \notin \cG} \\
	&\lesssim \dst^r j^{r/p} 2^{j\paren{r(s-1/p)+1}} + 2^j 2^{-2j} \tag{using \eqref{eqn:lb2.2a},\eqref{eqn:lb2bern}}.
\end{align*}
Thus, substituting $d = 2^j$ and $\tau = 1/d = 2^{-j}$ in \eqref{eqn:dtvlb} (and ignoring multiplicative constants), we get
\begin{align*}
	\frac{1}{2^j} \sum_{k \in \cI_j} \totalvardist{\p^{Y^n}_{-k}}{\p^{Y^n}_{+k}} 
	&\gtrsim 1 - \dst^r j^{r/p} 2^{j\paren{r(s-1/p)+1}} - 2^{-j} \\
	&\simeq 1 - \dst^r j^{r/p} 2^{jr\paren{s-1/p+1/r}}.
\end{align*}
Choosing $j$ such that
\begin{equation} \label{eqn:lb2.3}
	\dst^r 2^{jr(s-1/p+1/r)} j^{r/p} \simeq 1
\end{equation}
gives
\begin{equation*}
	\frac{1}{2^j} \sum_{k \in \cI_j} \totalvardist{\p^{Y^n}_{-k}}{\p^{Y^n}_{+k}} \gtrsim 1. 
\end{equation*}
or
\begin{equation} \label{eqn:lb2.4}
	\paren{ \frac{1}{2^j} \sum_{k \in \cI_j} \totalvardist{\p^{Y^n}_{-k}}{\p^{Y^n}_{+k}} }^2 \gtrsim 1. 
\end{equation}

\paragraph{Putting things together.} From \eqref{eqn:lb2.0} and \eqref{eqn:lb2.4}, we get, for any $j$ satisfying $\dst^r 2^{jr(s-1/p+1/r)} j^{r/p} \simeq 1$, that
$
1 \lesssim (n2^\numbits)2^{-2j(s+1-1/p)} j^{-2/p}.
$
This then yields
\begin{equation} \label{eqn:lb2.5}
	2^{2j(s+1-1/p)} j^{2/p} \lesssim n2^\numbits.
\end{equation}
To get a rough idea of the bound this will give, let us ignore $j^{2/p}$ to get,
\begin{equation} \label{eqn:lb2ruff1}
	2^j \lesssim \paren{n2^\numbits}^\frac{1}{2(s+1-1/p)}.
\end{equation}
Now, since $\dst^r 2^{jr(s-1/p+1/r)} j^{r/p} \simeq 1$, we get, roughly, (ignoring $j^{r/p}$)
\begin{equation} \label{eqn:lb2ruff2}
	2^j \simeq (1/\dst)^{\frac{1}{s-1/p+1/r}}.
\end{equation}
Combining \eqref{eqn:lb2ruff1}, \eqref{eqn:lb2ruff2}, we get that (up to logarithmic factors)
\[
\dst^r \gtrsim (n2^\numbits)^{-\frac{r(s-1/p+1/r)}{2(s+1-1/p)}}
\]
which is the desired bound, again up to logarithmic factors. We now show how a slightly more careful analysis lets us obtain the tight bound.

\paragraph{Bringing in log factors.} From \eqref{eqn:lb2.5}, we get that
\begin{equation} \label{eqn:lb2.6}
	2^j \lesssim \paren{n2^\numbits}^\frac{1}{2(s+1-1/p)} \paren{\log(n2^\numbits)}^{-\frac{2/p}{2(s+1-1/p)}}.
\end{equation}
Now, since $\dst^r 2^{jr(s-1/p+1/r)} j^{r/p} \simeq 1$, we get
\[
2^j \simeq (1/\dst)^{\frac{1}{s-1/p+1/r}} \paren{\log(1/\dst)}^{-\frac{1}{p(s-1/p+1/r)}}.
\]
Substituting this in \eqref{eqn:lb2.6}, we get
\[
(1/\dst)^{\frac{1}{s-1/p+1/r}} \paren{\log(1/\dst)}^{-\frac{1}{p(s-1/p+1/r)}} \lesssim
\paren{n2^\numbits}^\frac{1}{2(s+1-1/p)} \paren{\log(n2^\numbits)}^{-\frac{2/p}{2(s+1-1/p)}}
\]
or
\[
1/\dst \paren{\log(1/\dst)}^{-1/p} 
\lesssim 
\paren{n2^\numbits}^\frac{s-1/p+1/r}{2(s+1-1/p)} \paren{\log(n2^\numbits)}^{-(2/p)\frac{(s-1/p+1/r)}{2(s+1-1/p)}}.
\]
This implies that
\begin{align*}
	1/\dst &\lesssim \paren{n2^\numbits}^\frac{s-1/p+1/r}{2(s+1-1/p)} \paren{\log(n2^\numbits)}^{-(2/p)\frac{(s-1/p+1/r)}{2(s+1-1/p)}} 
	\paren{ \log\paren{ \paren{n2^\numbits}^\frac{s-1/p+1/r}{2(s+1-1/p)} \paren{\log(n2^\numbits)}^{-(2/p)\frac{(s-1/p+1/r)}{2(s+1-1/p)}} } }^{1/p} \\
	&\simeq \paren{n2^\numbits}^\frac{s-1/p+1/r}{2(s+1-1/p)} \paren{\log(n2^\numbits)}^{-(2/p)\frac{(s-1/p+1/r)}{2(s+1-1/p)} + \frac{1}{p}} \\
	&= \paren{n2^\numbits}^\frac{s-1/p+1/r}{2(s+1-1/p)} \paren{\log(n2^\numbits)}^{- \frac{1-1/r}{p(s-1/p+1)}}.
\end{align*}
Thus
\begin{equation}
	\dst^r \gtrsim 
	\paren{n2^\numbits}^\frac{r(s-1/p+1/r)}{2(s+1-1/p)} \paren{\log(n2^\numbits)}^{- \frac{r-1}{p(s-1/p+1)}}.
\end{equation}

\subsection{Concluding the proof of~\cref{theo:lb}}
Combining lower bounds from~\cref{sec:lb1,sec:lb2} with lower bounds in the classical setting \cite{DonohoJKP96} (where the rate transition happens at $r = (2s+1)p$), we get~\cref{theo:lb}. \qed

\end{document}